\makeatletter \@addtoreset{equation}{section} \makeatother
\newtheorem{theorem}{Theorem}[section]
\newtheorem{proposition}{Proposition}[section]
\newtheorem{lemma}{Lemma}[section]
\newtheorem{remark}{Remark}[section]
\begin{document}
\title{New type of solutions for the critical polyharmonic equation}

\author{Wenjing Chen\footnote{E-mail address:\, {\tt wjchen@swu.edu.cn} (W. Chen), {\tt zxwangmath@163.com} (Z. Wang).}\  \ and Zexi Wang\footnote{Corresponding author.}\\
\footnotesize  School of Mathematics and Statistics, Southwest University,
Chongqing, 400715, P.R. China}

\date{ }
\maketitle

\begin{abstract}
{ In this paper, we consider the following critical polyharmonic equation
\begin{align*}
(  -\Delta)^m u+V(|y'|,y'')u=u^{m^*-1},\quad u>0, \quad y=(y',y'')\in \mathbb{R}^3\times \mathbb{R}^{N-3},
  \end{align*}
where $m^*=\frac{2N}{N-2m}$, $N>4m+1$, $m\in \mathbb{N}^+$, and $V(|y'|,y'')$ is a bounded nonnegative function in $\mathbb{R}^+\times \mathbb{R}^{N-3}$. By using the reduction argument and local Poho\u{z}aev identities, we prove that if $r^{2m}V(r,y'')$ has a stable critical point $(r_0,y_0'')$ with $r_0>0$ and $V(r_0,y_0'')>0$, then the above problem has a new type of solutions, which concentrate at points lying on the top and the bottom circles of a cylinder. }

\vspace{.2cm}
\emph{\bf Keywords:} Polyharmonic equation; Reduction argument; Local Poho\u{z}aev identities.

\vspace{.2cm}
\emph{\bf 2020 Mathematics Subject Classification:} 35J60, 35B38; 35J91.

\end{abstract}

\section{Introduction}
In this paper, we consider the following nonlinear elliptic equation
\begin{align}\label{pro}
(  -\Delta)^m u+V(y)u=u^{m^*-1},\quad u>0,\quad u\in H^{m}(\mathbb{R}^N),
  \end{align}
where $m^*=\frac{2N}{N-2m}$, $N>4m+1$, $m\in \mathbb{N}^+$, $V(y)$ is a bounded nonnegative potential, and $H^{m}(\mathbb{R}^N)$ is the Hilbert space
with the scalar product
\begin{align*}
 \langle u,v\rangle=\left\{
  \begin{array}{ll}
  \displaystyle \int_{\mathbb{R}^N}\Big((\Delta^{\frac{m}{2}}u)(\Delta^{\frac{m}{2}}v)+uv \Big)dy,\quad &\text{if $m$ is even},
  \vspace{.3cm}\\ \displaystyle\int_{\mathbb{R}^N}\Big(\big(\nabla (\Delta^{\frac{m-1}{2}}u)\big)\cdot\big(\nabla(\Delta^{\frac{m-1}{2}}v)\big)+uv\Big) dy,\quad &\text{if $m$ is odd}.
    \end{array}
    \right.
  \end{align*}

In the case of $m=1$, problem \eqref{pro} reduces to
\begin{align}\label{mpro}
 -\Delta u+V(y)u=u^{2^*-1},\quad u>0,\quad u\in H^1(\mathbb{R}^N),
  \end{align}
where $2^*=\frac{2N}{N-2}$. Noting that if $V(y)\geq0$ and $V(y)\neq0$, the mountain pass value for \eqref{mpro} is not a critical value of the corresponding functional. Therefore, all the arguments based on the concentration compactness arguments \cite{Lions1,Lions2} can not be used to obtain an existence result of solutions for
\eqref{mpro}.
Benci and Cerami \cite{BC} first obtained the existence result for \eqref{mpro}, they proved that if $\|V\|_{\frac{N}{2}}$ 
is suitably small, then
\eqref{mpro} has a solution whose energy is in the interval $\big(\frac{1}{N}S^{\frac{N}{2}},\frac{2}{N}S^{\frac{N}{2}}\big)$, where $S$ is the best Sobolev constant in the embedding $D^{1,2}(\mathbb{R}^N)\hookrightarrow L^{2^*}(\mathbb{R}^N)$. In \cite{CWY}, Chen, Wei and Yan  proved that \eqref{mpro} has infinitely many solutions with arbitrarily large energy if $N \geq5$, $V(y)$ is radially symmetric, and $r^2V(r)$ has a local maximum point or a local minimum point $r_0>0$ with $V(r_0)>0$. In fact, this condition is necessary for the existence of solutions because by the Poho\u{z}aev identity
\begin{equation*}
  \int_{\mathbb{R}^N}\Big(V(|y|)+\frac{1}{2}|y|V'(|y|)\Big)u^2dy=0,
\end{equation*}
\eqref{mpro} admits no solution if $r^2V(r)$ is non-increasing or non-decreasing.
Later, Peng, Wang and Yan \cite{PWY} constructed infinitely many solutions  on a circle under a weaker symmetry condition for $V(y)$, where they only required that:

\vspace{.2cm}

\noindent$(A_1)$ $V(y)=V(|\tilde{y}'|,\tilde{y}'')$ is a bounded nonnegative function, where $y=(\tilde{y}',\tilde{y}'')\in \mathbb{R}^2\times \mathbb{R}^{N-2}$;

\vspace{.2cm}

\noindent$(A_2)$ $\tilde{r}^2V(\tilde{r},\tilde{y}'')$ has a stable critical point
$(\tilde{r}_0,\tilde{y}_0'')$ in the sense that $\tilde{r}^2V(\tilde{r},\tilde{y}'')$ has a critical point $(\tilde{r}_0,\tilde{y}_0'')$

\ \ \ \ satisfying $\tilde{r}_0>0$,
$V(\tilde{r}_0,\tilde{y}_0'')>0$, and $deg\big(\nabla (\tilde{r}^2V(\tilde{r},\tilde{y}'')),(\tilde{r}_0,\tilde{y}_0'')\big)\neq0$.

\vspace{.2cm}

\noindent Moreover, He, Wang and Wang \cite{HWW2} proved that the solutions obtained in \cite{PWY} are nondegenerate. For more related results about \eqref{mpro}, we refer the readers to \cite{GMYZ,HWW1,GL2,GLN2,VW,LT} and references therein.

Recently, Duan, Musso and Wei \cite{DMW} constructed a new type of solutions for the following equation
\begin{align*}
 -\Delta u=Q(y)u^{2^*-1},\quad u>0,\quad u\in H^1(\mathbb{R}^N),
  \end{align*}
where $Q(y)$ is radially symmetric, and the solutions concentrate at points lying on the top and the
bottom circles of a cylinder. More precisely, these solutions are different from \cite{PWY} and have the form
\begin{equation*}
  \sum\limits_{j=1}^kW_{\bar{x}_j,\lambda}+\sum\limits_{j=1}^kW_{\underline{x}_j,\lambda}+\varphi_k,
\end{equation*}
where $W_{x,\lambda}(y)=\big(\frac{\lambda}{1+\lambda^2|y-x|^2}\big)^{\frac{N-2}{2}}$, $\varphi_k$ is a remainder term,
\begin{align*}
   \left\{
  \begin{array}{ll}
  \bar{x}_j=\big(\bar{r}\sqrt{1-\bar{h}^2}\cos \frac{2(j-1)\pi}{k},\bar{r}\sqrt{1-\bar{h}^2}\sin \frac{2(j-1)\pi}{k},\bar{r}\bar{h},0\big),
  \quad &j=1,2,\cdots,k,\\
  \underline{x}_j=\big(\bar{r}\sqrt{1-\bar{h}^2}\cos \frac{2(j-1)\pi}{k},\bar{r}\sqrt{1-\bar{h}^2}\sin \frac{2(j-1)\pi}{k},-\bar{r}\bar{h},0\big),
  \quad &j=1,2,\cdots,k,
    \end{array}
    \right.
 \end{align*}
with $\bar{h}$ goes to zero, and $\bar{r}$ is close to some $r_0>0$.  Soon after, the above result was generalized to the fractional and polyharmonic cases, see \cite{DG} and \cite{GGH} for more details.
Inspired by the results of \cite{DMW} and \cite{PWY},  under a weaker symmetry condition on $V(y)$, by applying the reduction argument and local Poho\u{z}aev identities, Du et al. \cite{DHWW} obtained a new type of  solutions for \eqref{mpro}, which are different from those obtained in \cite{PWY}. Moreover, the fractional case was considered in \cite{DHW}.

If $m\geq 2$, \eqref{pro} becomes the equation involving polyharmonic operator. We would like to mention that the polyharmonic operator has found considerable interest in the literature due to its geometry roots, and the problems involving polyharmonic operator present new and changeling features compared with the elliptic operator (namely $m=1$).
To our best knowledge, few results are known for problem \eqref{pro}. In particular,
in \cite{GLN1}, Guo, Liu and Nie proved that problem \eqref{pro} has infinitely many solutions concentrated on a circle.

Motivated by \cite{DHWW}, \cite{DMW} and \cite{GLN1}, in this paper, under a weaker symmetry condition on $V(y)$, we intend to construct a new type of solutions for problem \eqref{pro}, which concentrate at points lying on the top and the bottom circles of a cylinder.

Before the statement of the main results, let us first introduce some notations. We assume that $V(y)$ satisfies the following assumptions:

\vspace{.2cm}

\noindent$(V_1)$ $V(y)=V(|y'|,y'')$ is a bounded nonnegative function, where $y=({y}',{y}'')\in \mathbb{R}^3\times \mathbb{R}^{N-3}$;

\begin{description}
\item [$(V_2)$] $r^{2m}V(r,y'')$ has a stable critical point $(r_0,y_0'')$ in the sense that $r^{2m}V(r,y'')$ has a critical point
 $({r}_0,{y}_0'')$
 satisfying ${r}_0>0$,
 $V(r_0,y_0'')>0$, and
$deg\big(\nabla (r^{2m}V(r,y'')),(r_0,y_0'')\big)\neq0$.
\end{description}

Recall that (see \cite{S})
\begin{equation*}
  U_{x,\lambda}(y)=P_{m,N}^{\frac{N-2m}{4m}}\Big(\frac{\lambda}{1+\lambda^2|y-x|^2}\Big)^{\frac{N-2m}{2}},\quad \lambda>0,\quad x\in \mathbb{R}^N,
\end{equation*}
is the only radial solution of the equation
\begin{align*}
(  -\Delta)^m u=u^{m^*-1},\quad u>0,\quad \text{ in $\mathbb{R}^N$},
  \end{align*}
where $P_{m,N}=\prod\limits_{h=-m}^{m-1}(N+2h)$ is a constant.

Define
\begin{align*}
  H_s=\Big\{u:&u\in H^{m}(\mathbb{R}^N),u(y_1,y_2,y_3,y'')=u(y_1,-y_2,-y_3,y''),\\
  &u(r \cos\theta,r \sin\theta, y_3,y'')=u\Big(r\cos \big(\theta+\frac{2j \pi}{k}\big),r\sin \big(\theta+\frac{2j \pi}{k}\big),y_3,y''\Big)\Big\},
\end{align*}
where $r=\sqrt{y_1^2+y_2^2}$ and $\theta=\arctan \frac{y_2}{y_1}$.

 Let
 \begin{align*}
   \left\{
  \begin{array}{ll}
  x_j^+=\big(\bar{r}\sqrt{1-\bar{h}^2}\cos \frac{2(j-1)\pi}{k},\bar{r}\sqrt{1-\bar{h}^2}\sin \frac{2(j-1)\pi}{k},\bar{r}\bar{h},\bar{y}''\big),
  \quad &j=1,2,\cdots,k,\\
  x_j^-=\big(\bar{r}\sqrt{1-\bar{h}^2}\cos \frac{2(j-1)\pi}{k},\bar{r}\sqrt{1-\bar{h}^2}\sin \frac{2(j-1)\pi}{k},-\bar{r}\bar{h},\bar{y}''\big),
  \quad &j=1,2,\cdots,k,
    \end{array}
    \right.
 \end{align*}
where $\bar{y}''$ is a vector in $\mathbb{R}^{N-3}$, $\bar{h}\in (0,1)$ and $(\bar{r},\bar{y}'')$ is close to $(r_0,y_0'')$.

In this paper, we consider the following three cases of $\bar{h}$ in the process of constructing solutions:
\vspace{.2cm}

$\bullet$ {\bf Case 1.} $\bar{h}$ goes to 1;

\vspace{.2cm}

$\bullet$ {\bf Case 2.} $\bar{h}$ is separated from 0 and 1;

\vspace{.2cm}

$\bullet$ {\bf Case 3.} $\bar{h}$ goes to 0.

\vspace{.2cm}
The idea of constructing solutions is to use $U_{x_j^{\pm},\lambda}$ as an approximation solution. To deal with the slow decay of this function when $N$ is not big enough, we introduce the smooth cut-off function $\xi(y)=\xi(|y'|,y'')$ satisfying $\xi=1$ if $|(r,y'')-(r_0,y_0'')|\leq \delta$, $\xi=0$ if $|(r,y'')-(r_0,y_0'')|\geq 2\delta$, and $0\leq \xi \leq 1$, where $\delta>0$ is a small constant such that $r^{2m}V(r,y'')>0$ if $|(r,y'')-(r_0,y_0'')|\leq 10\delta$.

Denote
\begin{equation*}
  Z_{x_j^{\pm},\lambda}=\xi U_{x_j^{\pm},\lambda},\quad Z^*_{\bar{r},\bar{h},\bar{y}'',\lambda}=\sum\limits_{j=1}^kU_{x_j^{+},\lambda}+\sum\limits_{j=1}^kU_{x_j^{-},\lambda},\quad
  Z_{\bar{r},\bar{h},\bar{y}'',\lambda}=\sum\limits_{j=1}^k\xi U_{x_j^{+},\lambda}+\sum\limits_{j=1}^k\xi U_{x_j^{-},\lambda}.
\end{equation*}

As for the {\bf Case 1}, we assume that $\alpha=N-4m-\iota$, $\iota$ is a small constant, $k>0$ is a large integer, $\lambda\in \big[L_0k^{\frac{N-2m}{N-4m-\alpha}},L_1k^{\frac{N-2m}{N-4m-\alpha}}\big]$ for some constants $L_1>L_0>0$ and $(\bar{r},\bar{h},\bar{y}'')$ satisfies
\begin{equation}\label{case1}
  |(\bar{r},\bar{y}'')-(r_0,y_0'')|\leq\vartheta,\quad \sqrt{1-\bar{h}^2}=M_1\lambda^{-\frac{\alpha}{N-2m}}+o(\lambda^{-\frac{\alpha}{N-2m}}),
\end{equation}
where $\vartheta$ is a small constant, $M_1$ is a positive constant.

\begin{theorem}\label{th1}
Assume that $N>4m+1$, if $V(y)$ satisfies $(V_1)$ and $(V_2)$, then there exists an integer $k_0>0$, such that for any $k>k_0$, equation \eqref{pro} has a solution $u_k$ of the form
\begin{equation*}
  u_k=Z_{\bar{r}_k,\bar{h}_k,\bar{y}_k'',\lambda_k}+\phi_k,
\end{equation*}
where $\lambda_k\in \big[L_0k^{\frac{N-2m}{N-4m-\alpha}},L_1k^{\frac{N-2m}{N-4m-\alpha}}\big]$ and $\phi_k\in H_s$. Moreover, as $k\rightarrow\infty$, $|(\bar{r}_k,\bar{y}_k'')-(r_0,y_0'')|\rightarrow0$, $\sqrt{1-\bar{h}_k^2}=M_1\lambda_k^{-\frac{\alpha}{N-2m}}+o(\lambda_k^{-\frac{\alpha}{N-2m}})$, and
$\lambda_k^{-\frac{N-2m}{2}}\|\phi_k\|_\infty\rightarrow0$.
\end{theorem}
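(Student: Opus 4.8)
\medskip
\noindent\emph{Proof strategy.}
The plan is to implement a Lyapunov--Schmidt reduction in the symmetric space $H_s$ and then to solve the resulting finite-dimensional problem through local Poho\u{z}aev identities, invoking $(V_2)$ only at the very end. Write $\Lambda=(\bar{r},\bar{h},\bar{y}'',\lambda)$ for the concentration parameters subject to \eqref{case1}, $S(u)=(-\Delta)^m u+V(y)u-u_+^{m^*-1}$ for the error operator, and note that by the Sobolev embedding \eqref{pro} (with the truncated nonlinearity $u_+^{m^*-1}$) is equivalent to a fixed-point equation $u=(-\Delta)^{-m}\big(u_+^{m^*-1}-V(y)u\big)$, whose formulation via the positive Riesz kernel $|y|^{2m-N}$ also delivers $u_k>0$ at the end. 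First I would freeze $\Lambda$ and look for $u_k=Z_{\bar{r},\bar{h},\bar{y}'',\lambda}+\phi$ with $\phi\in H_s$ orthogonal to the truncated kernel functions $\xi\,\partial_\lambda U_{x_j^\pm,\lambda}$, $\xi\,\partial_{\bar r}U_{x_j^\pm,\lambda}$, $\xi\,\partial_{\bar h}U_{x_j^\pm,\lambda}$ and $\xi\,\partial_{\bar{y}''_\ell}U_{x_j^\pm,\lambda}$ ($\ell=1,\dots,N-3$); projecting \eqref{pro} then gives a linear equation $L_k\phi=-S(Z)-N(\phi)+\sum_i c_i\,\xi\,\partial_i U$ in the orthogonal complement together with the orthogonality conditions, $L_k$ being the linearization at $Z$, $N(\phi)$ superquadratic, and the $c_i$ Lagrange multipliers.

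The finite-dimensional reduction rests on two estimates. The first is a weighted $L^\infty$-bound for $S(Z)$: its three sources --- the mutual interaction of the $2k$ truncated bubbles, the potential term $\xi V U_{x_j^\pm,\lambda}$, and the cut-off commutator $[(-\Delta)^m,\xi]U_{x_j^\pm,\lambda}$ --- are each a small negative power of $k$ under \eqref{case1}. The second is invertibility of $L_k$ on the orthogonal complement with a $k$-independent bound, which follows from the nondegeneracy of the standard polyharmonic bubble $U_{x,\lambda}$ (its linearized kernel being exactly the span of $\partial_\lambda U_{x,\lambda}$ and $\partial_{x_i}U_{x,\lambda}$) together with the fact that in \textbf{Case 1} the bubbles are pairwise far apart after rescaling by $\lambda$ --- the in-circle spacing being $\sim\bar{r}\sqrt{1-\bar{h}^2}/k$ and the top--bottom separation of order one. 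The contraction mapping principle then produces, for all large $k$ and every admissible $\Lambda$, a unique $\phi=\phi_k(\Lambda)\in H_s$ solving the linear equation, of class $C^1$ in $\Lambda$ and satisfying $\lambda^{-\frac{N-2m}{2}}\|\phi_k(\Lambda)\|_\infty\to 0$ with a quantitative rate.

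It remains to choose $\Lambda=\Lambda_k$ so that all $c_i$ vanish, and here I would use local Poho\u{z}aev identities for $u_k=Z+\phi_k(\Lambda)$ on balls $B_d(x_j^\pm)$ of fixed small radius, tested with the dilation field $(y-x_j^\pm)\cdot\nabla u_k$ and with the derivatives $\partial_{y_i}u_k$ in the configuration-moving directions; since the $c_i$ multiply functions localized at the bubbles, the vanishing of all $c_i$ is equivalent to this system. Inserting the expansions of $U_{x,\lambda}$ and the bounds on $\phi_k$, each identity becomes ``main term $+\,o(\text{main term})=0$''; I expect the $\bar r$- and $\bar{y}''$-identities to reduce to
\begin{equation*}
 \nabla_{(r,y'')}\big(r^{2m}V(r,y'')\big)\big|_{(\bar{r},\bar{y}'')}=o(1),
\end{equation*}
while the dilation identity, balancing the self-interaction energy of the $k$ clustered bubbles on each circle against $\int\xi V U_{x_j^\pm,\lambda}^2\sim cV(\bar{r},\bar{y}'')\lambda^{-2m}$, forces $\sqrt{1-\bar{h}^2}=M_1\lambda^{-\frac{\alpha}{N-2m}}+o(\lambda^{-\frac{\alpha}{N-2m}})$ and confines $\lambda$ to $\big[L_0k^{\frac{N-2m}{N-4m-\alpha}},L_1k^{\frac{N-2m}{N-4m-\alpha}}\big]$. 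The reduced system being a small perturbation of $\nabla(r^{2m}V)=0$ coupled with a scalar balance law in $(\bar h,\lambda)$, the degree condition $\deg(\nabla(r^{2m}V),(r_0,y_0''))\neq0$ in $(V_2)$ yields a zero $\Lambda_k$ with $|(\bar{r}_k,\bar{y}_k'')-(r_0,y_0'')|\to0$ and $\bar{h}_k,\lambda_k$ in the claimed ranges, completing the proof.

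The main obstacle is the precision demanded of the interaction estimates in the first two steps and of the Poho\u{z}aev expansion in the third: one must evaluate the energy of the $2k$ truncated bubbles --- which in \textbf{Case 1} couples the very small in-circle spacing $\sim\bar{r}\sqrt{1-\bar{h}^2}/k$ with an order-one top--bottom distance --- sharply enough that the Poho\u{z}aev identities genuinely isolate $\nabla(r^{2m}V)$ from the interaction and cut-off errors. The cut-off $\xi$ is forced by the slow decay $U_{x,\lambda}\sim|y-x|^{-(N-2m)}$ when $N$ is near $4m+1$, which would otherwise make these quantities non-integrable; carrying the commutator $[(-\Delta)^m,\xi]$ --- with no counterpart in the second-order theory --- through every estimate is the heaviest technical ingredient.
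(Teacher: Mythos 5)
Your overall strategy coincides with the paper's: a Lyapunov--Schmidt reduction in $H_s$ with weighted norms, followed by local Poho\u{z}aev identities and a degree argument based on $(V_2)$. However, there is a concrete counting problem in your finite-dimensional step. You impose orthogonality to $\xi\partial_\lambda U_{x_j^\pm,\lambda}$, $\xi\partial_{\bar r}U_{x_j^\pm,\lambda}$, $\xi\partial_{\bar h}U_{x_j^\pm,\lambda}$ and $\xi\partial_{\bar y''_\ell}U_{x_j^\pm,\lambda}$, which produces $N$ Lagrange multipliers, but the identities you list --- one dilation identity (independent, by symmetry) and the translation identities in the configuration-moving directions --- yield at most $N-2$ usable equations. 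The natural candidate for the $\bar h$-multiplier, the $y_3$-translation identity, is identically $0=0$ on $H_s$: under $(y_1,y_2,y_3,y'')\mapsto(y_1,-y_2,-y_3,y'')$ the residual $(-\Delta)^m u_k+Vu_k-(u_k)_+^{m^*-1}$ is even while $\partial_{y_3}u_k$ is odd, and any symmetric test domain kills the integral. The paper avoids this by \emph{not} treating $\bar h$ as an independent reduction parameter: $\bar h$ is slaved to $\lambda$ through the ansatz \eqref{case1} with a prescribed constant $M_1$, the space $\mathbb{H}$ carries only the $N-1$ directions $l=2,\dots,N$ (namely $\partial_\lambda$, $\partial_{\bar r}$, $\partial_{\bar y''_l}$), and the $\lambda$-equation is obtained not from a Poho\u{z}aev identity but from the global projection \eqref{con3} $\int_{\mathbb{R}^N}(\cdots)\partial_\lambda Z_{\bar r,\bar h,\bar y'',\lambda}\,dy=0$, whose expansion (Lemma \ref{ener1}) contains both cluster interactions $\frac{B_3k^{N-2m}}{\lambda^{N-2m+1}(\sqrt{1-\bar h^2})^{N-2m}}$ and $\frac{B_4k}{\lambda^{N-2m+1}\bar h^{N-2m-1}\sqrt{1-\bar h^2}}$ balanced against $\frac{B_1V}{\lambda^{2m+1}}$. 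To close your version you would have to either drop the $\bar h$-direction as the paper does, or add the projection onto $\partial_{\bar h}Z$ and carry out the corresponding expansion (delicate, since $\partial_{\bar h}x_j^\pm$ blows up like $(1-\bar h^2)^{-1/2}$ in Case 1) and a degree argument in one extra variable; neither is addressed.

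A second, more minor, mismatch concerns the Poho\u{z}aev domains. In Case 1 the circle radius $\bar r\sqrt{1-\bar h^2}\to0$, so a ball $B_d(x_j^\pm)$ of fixed small radius contains the \emph{entire} collapsing cluster of $k$ bubbles, not a single one; ``per-bubble'' identities therefore do not localize as you suggest, and the dilation field $(y-x_j^\pm)\cdot\nabla u_k$ does not cleanly separate the $\lambda$- from the $\bar r$-information. The paper instead works on the single symmetric region $D_\varrho=\{|(r,y'')-(r_0,y_0'')|\le\varrho\}$ containing all $2k$ bubbles, uses the dilation about the origin (which, after subtracting the $y''$-identities, isolates $\partial_{\bar r}(\bar r^{2m}V)$) together with $\partial_{y_i}$ only for $i=4,\dots,N$, and chooses $\varrho\in(3\delta,4\delta)$ by an averaging argument so that the boundary terms in $\phi$ are $o(k/\lambda^{2m})$. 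These are the two points you would need to repair; the remaining ingredients of your sketch (weighted estimates for the error and nonlinearity, invertibility from nondegeneracy of the polyharmonic bubble and the divergence of the rescaled spacing, contraction mapping, and the final degree computation) agree with Sections \ref{two}--\ref{three} of the paper.
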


\begin{remark}
{\rm The condition $N>4m+1$ in Theorem \ref{th1} is used in Lemmas \ref{err} and \ref{ener2} to guarantee the existence of a small constant $\iota>0$. }
\end{remark}

For the {\bf Case 2} and {\bf Case 3}, we assume that $k>0$ is a large integer, $\lambda\in \big[L_0'k^{\frac{N-2m}{N-4m}},L_1'k^{\frac{N-2m}{N-4m}}\big]$ for some constants $L_1'>L_0'>0$ and $(\bar{r},\bar{h},\bar{y}'')$ satisfies
\begin{equation}\label{case2}
  |(\bar{r},\bar{y}'')-(r_0,y_0'')|\leq\vartheta,\quad \bar{h}=a+{M_2\lambda^{-\frac{N-4m}{N-2m}}}+o(\lambda^{-\frac{N-4m}{N-2m}}),
\end{equation}
where $a\in [0,1)$, $\vartheta$ is a small constant, $M_2$ is a positive constant.

\begin{theorem}\label{th2}
Assume that $N>4m+1$, if $V(y)$ satisfies $(V_1)$ and $(V_2)$, then there exists an integer $k_0>0$, such that for any $k>k_0$, equation \eqref{pro} has a solution $u_k$ of the form
\begin{equation*}
  u_k=Z_{\bar{r}_k,\bar{h}_k,\bar{y}_k'',\lambda_k}+\phi_k.
\end{equation*}
where $\lambda_k\in \big[L_0k^{\frac{N-2m}{N-4m}},L_1k^{\frac{N-2m}{N-4m}}\big]$ and $\phi_k\in H_s$. Moreover, as $k\rightarrow\infty$, $|(\bar{r}_k,\bar{y}_k'')-(r_0,y_0'')|\rightarrow0$, $\bar{h}_k=a+{M_2\lambda_k^{-\frac{N-4m}{N-2m}}}+o(\lambda_k^{-\frac{N-4m}{N-2m}})$, and
$\lambda_k^{-\frac{N-2m}{2}}\|\phi_k\|_\infty\rightarrow0$.
\end{theorem}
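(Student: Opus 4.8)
The plan is to carry out a finite-dimensional Lyapunov--Schmidt reduction together with local Poho\u{z}aev identities, in the spirit of \cite{DMW,DHWW,GLN1}. First I would freeze the parameters $(\bar r,\bar h,\bar y'',\lambda)$ in the range prescribed by \eqref{case2} and look for a solution of \eqref{pro} of the form $u=Z_{\bar r,\bar h,\bar y'',\lambda}+\phi$, where $\phi$ lies in a weighted $L^\infty$-type space adapted to the decay of the bubbles and is constrained to the orthogonal complement $\mathbb{E}$ of the approximate kernel spanned by $\xi\,\partial_\lambda U_{x_j^{\pm},\lambda}$ (together with the translation modes compatible with the symmetry of $H_s$), $j=1,\dots,k$. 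Projecting \eqref{pro} onto $\mathbb{E}$ and onto its complement splits the problem into an auxiliary equation for $\phi$ and a finite-dimensional bifurcation equation for the parameters.

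For the auxiliary equation I would first estimate, in the weighted norm, the error $\mathcal{R}_k:=(-\Delta)^m Z_{\bar r,\bar h,\bar y'',\lambda}+V Z_{\bar r,\bar h,\bar y'',\lambda}-Z_{\bar r,\bar h,\bar y'',\lambda}^{m^*-1}$. Since $(-\Delta)^m$ has no maximum principle, this must be done through the integral representation of $U_{x,\lambda}$ by the polyharmonic Green function, carefully tracking the various contributions: the neighbour interaction on each circle (of order $(k/\lambda)^{N-2m}$ after summation over $j$), the top--bottom interaction (of order $(\lambda\bar r\bar h)^{-(N-2m)}$), the potential term (of order $\lambda^{-2m}V$), and the error introduced by the cutoff $\xi$. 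Next, using the nondegeneracy of $U_{x,\lambda}$ and the symmetry of $H_s$ to control the many almost-zero modes created by the $k$ rotated bubbles, I would show that the linearized operator $L_k:=(-\Delta)^m+V-(m^*-1)Z_{\bar r,\bar h,\bar y'',\lambda}^{m^*-2}$ restricted to $\mathbb{E}$ is invertible with a bound independent of $k$. A contraction-mapping argument then produces a unique $\phi=\phi(\bar r,\bar h,\bar y'',\lambda)\in\mathbb{E}$ solving the auxiliary equation, depending smoothly on the parameters, with $\lambda^{-\frac{N-2m}{2}}\|\phi\|_\infty\to 0$.

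To close the bifurcation equation I would use the local Poho\u{z}aev identities rather than differentiating the reduced energy directly: testing the equation for $u=Z+\phi$ against $\partial_r u$, against $\partial_{y_l''}u$, and against the dilation multiplier $y\cdot\nabla u+\frac{N-2m}{2}u$, and integrating over a fixed ball around one of the concentration points, yields a system whose leading terms are, up to positive constants, $\nabla_{(\bar r,\bar y'')}\big(\bar r^{2m}V(\bar r,\bar y'')\big)$ together with the $\bar h$- and $\lambda$-derivatives of the combined interaction-plus-potential energy, all remainders being of strictly lower order by the bounds on $\phi$ and $\mathcal{R}_k$. The balancing condition \eqref{case2} is precisely what makes the $\bar h$- and $\lambda$-equations solvable with $\bar h=a+M_2\lambda^{-\frac{N-4m}{N-2m}}+o(\lambda^{-\frac{N-4m}{N-2m}})$, and the $(\bar r,\bar y'')$-equation is solved by a Brouwer-degree argument using hypothesis $(V_2)$, namely $\deg\big(\nabla(r^{2m}V),(r_0,y_0'')\big)\neq 0$. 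Cases $a\in(0,1)$ and $a=0$ are handled in parallel, the only difference being the relative size of the top--bottom interaction.

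The main obstacle will be the uniform-in-$k$ invertibility of $L_k$ on $\mathbb{E}$ and, more delicately, the sharp expansion of the error and of the interaction energy in Case 3, where $\bar h\to 0$: the top and bottom bubbles are then only $O(\bar r\bar h)$ apart, so their mutual interaction competes with the neighbour interaction $O(1/k)$ and can no longer be discarded, and one must extract its precise contribution — along with that of the cutoff, which is what forces $N>4m+1$ — to guarantee that the $\bar h$-equation admits a solution of the claimed asymptotic form. Once these estimates are secured, the reduction scheme and the degree argument proceed essentially as in \cite{DHWW,GLN1}.
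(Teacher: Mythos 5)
Your proposal follows essentially the same route as the paper: a weighted-norm Lyapunov--Schmidt reduction with uniform-in-$k$ invertibility obtained via the symmetry class and the nondegeneracy of $U_{x,\lambda}$, a contraction argument for $\phi$, local Poho\u{z}aev identities to produce the $(\bar r,\bar y'')$-equations, an energy-type identity for the $\lambda$-equation, and a Brouwer degree argument based on $(V_2)$, with the final case analysis on the limiting behaviour of $\bar h$ exactly as in Section \ref{four}. The only cosmetic difference is that the paper does not introduce a separate $\bar h$-equation ($\bar h$ is slaved to $\lambda$ through the ansatz \eqref{case2}, so the parameter count matches the $N-1$ orthogonality conditions), but this does not affect the validity of your plan.
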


\begin{remark}
{\rm The condition $N>4m+1$ in Theorem \ref{th2} is needed in the energy expansion, we can see this in Lemma \ref{ener2}.}
\end{remark}

\begin{remark}
{\rm The solutions obtained in Theorems \ref{th1} and \ref{th2} are different from those obtained in \cite{GLN1}.  }
\end{remark}

In this paper, we will prove Theorems \ref{th1} and \ref{th2} by using the finite dimensional reduction argument, introduced in \cite{BC1,FW}.
In the development of this method,
Wei and Yan \cite{WY1} first found that the number of the bubbles of solutions can be used to as the parameter of construction, and the idea of \cite{WY1} has been exploited and
further developed in the search for infinitely many solutions for equations with critical or asymptotic critical growth, see e.g. \cite{GN,LR,GL,DM,LWX,WY2,MWY,WY3,CYZ,GL1,PWW,WW}.

The paper is organized as follows. In Section \ref{two}, we will carry out the reduction procedure. Then we will study the reduced problem and prove Theorem \ref{th1} in Section \ref{three}. Section \ref{four} is devoted to the proof of Theorem \ref{th2}. In Appendix \ref{AppA}, we put some basic estimates. And we give the energy expansion for the approximation solutions in Appendix \ref{AppB}. Throughout the paper, $C$ denotes positive constant possibly different from line to line, $A=o(B)$ means $A/B\rightarrow 0$ and $A=O(B)$ means that $|A/B|\leq C$.

\section{Reduction argument}\label{two}
Let
\begin{equation*}
  \|u\|_*=\sup\limits_{y\in \mathbb{R}^N}\bigg(\sum\limits_{j=1}^k\Big(\frac{1}{(1+\lambda|y-x_j^+|)^{\frac{N-2m}{2}+\tau}}+
  \frac{1}{(1+\lambda|y-x_j^-|)^{\frac{N-2m}{2}+\tau}}
  \Big)\bigg)^{-1}\lambda^{-\frac{N-2m}{2}}|u(y)|,
\end{equation*}
and
\begin{equation*}
  \|f\|_{**}=\sup\limits_{y\in \mathbb{R}^N}\bigg(\sum\limits_{j=1}^k\Big(\frac{1}{(1+\lambda|y-x_j^+|)^{\frac{N+2m}{2}+\tau}}+
  \frac{1}{(1+\lambda|y-x_j^-|)^{\frac{N+2m}{2}+\tau}}
  \Big)\bigg)^{-1}\lambda^{-\frac{N+2m}{2}}|f(y)|,
\end{equation*}
where $\tau=\frac{N-4m-\alpha}{N-2m-\alpha}$. 
For $j=1,2,\cdots,k$, denote
\begin{equation*}
  Z_{j,2}^{\pm}=\frac{\partial Z_{x_j^\pm,\lambda}}{\partial \lambda},\quad Z_{j,3}^{\pm}=\frac{\partial Z_{x_j^\pm,\lambda}}{\partial \bar{r}},\quad Z_{j,l}^{\pm}=\frac{\partial Z_{x_j^\pm,\lambda}}{\partial \bar{y}_l''},\quad l=4,5,\cdots,N.
\end{equation*}

For later calculations, we divide $\mathbb{R}^N$ into $k$ parts, for $j=1,2,\cdots,k$, define
\begin{align*}
  \Omega_j:=\Big\{y:y=(y_1,y_2,y_3,y'')\in \mathbb{R}^3\times \mathbb{R}^{N-3},
  \Big\langle\frac{(y_1,y_2)}{|(y_1,y_2)|},\Big(\cos \frac{2(j-1)\pi}{k},\sin \frac{2(j-1)\pi}{k}\Big)\Big\rangle_{\mathbb{R}^2}\geq \cos \frac{\pi}{k}\Big\},
\end{align*}
where $\langle,\rangle_{\mathbb{R}^2}$ denotes the dot product in $\mathbb{R}^2$. For $\Omega_j$, we further divide it into two separate parts
\begin{equation*}
  \Omega_j^+:=\big\{y:y=(y_1,y_2,y_3,y'')\in \Omega_j,y_3\geq0\big\},
\end{equation*}
\begin{equation*}
  \Omega_j^-:=\big\{y:y=(y_1,y_2,y_3,y'')\in \Omega_j,y_3<0\big\}.
\end{equation*}
It's easy to verify that
\begin{equation*}
  \mathbb{R}^N=\bigcup\limits_{j=1}^k\Omega_j,\quad \Omega_j=\Omega_j^+\cup \Omega_j^-,
\end{equation*}
and
\begin{equation*}
  \Omega_j\cap \Omega_i=\emptyset,\quad \Omega_j^+\cap \Omega_j^-=\emptyset,\quad \text{if $i\neq j$}.
\end{equation*}

We also define the constrained space
\begin{equation*}
  \mathbb{H}:=\Big\{v:v\in H_s,\int_{\mathbb{R}^N}Z_{x_j^+,\lambda}^{m^*-2}Z_{j,l}^+vdy=0,
  \int_{\mathbb{R}^N}Z_{x_j^-,\lambda}^{m^*-2}Z_{j,l}^-vdy=0,\ \ j=1,2,\cdots,k,\ \ l=2,3,\cdots,N
  \Big\}.
\end{equation*}
Consider the following linearized problem
\begin{align}\label{lp}
 \left\{
  \begin{array}{ll}
 \ \ \ (-\Delta)^m \phi+V(r,y'')\phi-(m^*-1)Z_{\bar{r},\bar{h},\bar{y}'',\lambda}^{m^*-2}\phi\\
  =f+\sum\limits_{l=2}^Nc_l\sum\limits_{j=1}^k\Big
  (Z_{x_j^+,\lambda}^{m^*-2}Z_{j,l}^++Z_{x_j^-,\lambda}^{m^*-2}Z_{j,l}^-\Big),
  \quad  \mbox{in $\mathbb{R}^N$},\\
  \phi\in \mathbb{H},
    \end{array}
    \right.
\end{align}
for some real numbers $c_l$. 

In the sequel of this section, we assume that $(\bar{r},\bar{h},\bar{y}'')$ satisfies \eqref{case1}.

\begin{lemma}\label{xian}
Assume that $\phi_k$ solves \eqref{lp} for $f=f_k$. If $\|f_k\|_{**}$ goes to zero as $k$ goes to infinity, so does $\|\phi_k\|_*$.
\end{lemma}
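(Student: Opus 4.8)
The plan is to argue by contradiction along the lines of the standard linear theory for finite-dimensional reductions. Suppose the conclusion fails: then there exist a sequence $k\to\infty$, functions $\phi_k$ solving \eqref{lp} with data $f_k$ and multipliers $c_l^{(k)}$, such that $\|f_k\|_{**}\to 0$ but $\|\phi_k\|_*\geq c>0$; after normalizing we may assume $\|\phi_k\|_*=1$ and $\|f_k\|_{**}=o(1)$. The goal is to derive a contradiction by showing $\|\phi_k\|_*\to 0$. First I would obtain the pointwise representation of $\phi_k$ using the Green's representation for $(-\Delta)^m$: writing the equation as $(-\Delta)^m\phi_k = (m^*-1)Z_{\bar r,\bar h,\bar y'',\lambda}^{m^*-2}\phi_k - V\phi_k + f_k + \sum_l c_l^{(k)}\sum_j(\cdots)$ and convolving with the fundamental solution, I get
\begin{equation*}
  \phi_k(y) = C_N\int_{\R^N}\frac{1}{|y-z|^{N-2m}}\Big((m^*-1)Z^{m^*-2}\phi_k - V\phi_k + f_k + \text{(projection terms)}\Big)(z)\,dz.
\end{equation*}
Then using the weighted norms, the key estimate from Appendix~\ref{AppA} of the form $\int |y-z|^{-(N-2m)}(1+\lambda|z-x_j^\pm|)^{-\theta}\,dz \lesssim \lambda^{-2m}(1+\lambda|y-x_j^\pm|)^{-\theta+2m}$ for suitable exponents, and the fact that $Z^{m^*-2}\sim\sum_j(1+\lambda|y-x_j^\pm|)^{-4m}$ in the relevant region, I would bound each term of the right-hand side by a small multiple of $\|\phi_k\|_*$ times the defining weight of $\|\cdot\|_*$, plus $\|f_k\|_{**}$ times that weight, plus a term controlled by the $c_l^{(k)}$. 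The separation of the points $x_j^\pm$ (distance of order $\bar r/k$ between consecutive ones, and order $\bar r\bar h$ between the top and bottom circles) together with the choice $\tau=\frac{N-4m-\alpha}{N-2m-\alpha}$ ensures the cross terms in the sums over $j$ are summable with a small constant; this is where the scaling regime \eqref{case1} and the smallness of $\iota$ enter.

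Next I would estimate the Lagrange multipliers $c_l^{(k)}$. Testing \eqref{lp} against $Z_{j,l}^\pm$ and summing, the left-hand side pairings $\int\big((-\Delta)^m\phi_k + V\phi_k - (m^*-1)Z^{m^*-2}\phi_k\big)Z_{j,l}^\pm$ are small — one moves the operator onto $Z_{j,l}^\pm$, uses that $Z_{j,l}^\pm$ is close to a solution of the linearized critical equation (the kernel elements $\frac{\partial U_{x,\lambda}}{\partial\lambda}$, $\frac{\partial U_{x,\lambda}}{\partial x_i}$), and exploits the orthogonality conditions defining $\mathbb H$ together with the cutoff error and the potential term $V\phi_k$, which is lower order because $V$ is bounded and $\phi_k$ decays. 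The right-hand side contributes $\langle f_k, Z_{j,l}^\pm\rangle$, which is $O(\|f_k\|_{**})$ times a computable constant, and the diagonal term $c_l^{(k)}\sum_j\int Z_{x_j^\pm,\lambda}^{m^*-2}(Z_{j,l}^\pm)^2$, which is bounded below by a positive constant times an appropriate power of $\lambda$. Solving this almost-diagonal linear system for the $c_l^{(k)}$ gives $|c_l^{(k)}| = o(1)\cdot(\text{weights}) + O(\|f_k\|_{**})$, all small; care must be taken with the different $\lambda$-homogeneities of the $l=2$ (dilation) direction versus $l=3,\dots,N$ (translation) directions, so the system should be rescaled appropriately before inversion.

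Feeding the multiplier bounds back into the representation formula, I conclude $\|\phi_k\|_* \leq o(1)\|\phi_k\|_* + o(1)$, hence $\|\phi_k\|_* = o(1)$ — but this contradicts $\|\phi_k\|_*=1$, which completes the proof. The main obstacle, and the step demanding the most care, is the first one: establishing that the convolution of the fundamental solution of $(-\Delta)^m$ with the principal nonlinear term $Z^{m^*-2}\phi_k$ reproduces the weight in $\|\cdot\|_*$ with a genuinely small constant uniformly in $k$ and $\lambda$. For $m\geq 2$ the polyharmonic Green's function lacks a maximum principle, so one cannot use comparison/barrier arguments as in the $m=1$ case; instead one must rely purely on the explicit kernel $|y-z|^{-(N-2m)}$ and careful splitting of $\R^N$ into the regions $\Omega_j^\pm$ near each bubble and the far region, controlling the slow decay via the parameter $\alpha=N-4m-\iota$ and the precise exponent $\tau$. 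This is exactly the role of the technical integral estimates collected in Appendix~\ref{AppA}, and the constraint $N>4m+1$ guarantees there is room to choose $\iota>0$ making all the relevant exponents work out.
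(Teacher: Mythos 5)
Your overall architecture (contradiction, normalization $\|\phi_k\|_*=1$, Green's representation, estimation of the multipliers $c_l$ by testing against $Z_{1,t}^+$) matches the paper's proof up to the last step, and your multiplier estimates are essentially what the paper does. But there is a genuine gap at the step you yourself flag as the main obstacle: you propose to show that convolving the kernel $|y-z|^{-(N-2m)}$ with $(m^*-1)Z^{m^*-2}\phi_k$ returns the defining weight of $\|\cdot\|_*$ \emph{with a genuinely small constant}, and then to close the argument by absorption, concluding $\|\phi_k\|_*\leq o(1)\|\phi_k\|_*+o(1)$. This cannot work. The linearized operator $(-\Delta)^m-(m^*-1)U_{0,\Lambda}^{m^*-2}$ has a nontrivial kernel (spanned by $\partial_\lambda U$ and $\partial_{x_i}U$), so an a priori estimate with a small absorption constant, which would hold irrespective of the orthogonality conditions defining $\mathbb H$, is simply false: a kernel element would violate it. What the convolution estimate (Lemma \ref{AppA3} in the paper) actually gives is not a small constant but an improved decay exponent, $(1+\lambda|y-x_j^\pm|)^{-\frac{N-2m}{2}-\tau-\sigma}$ in place of $-\frac{N-2m}{2}-\tau$. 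The resulting ratio of weights is small only when $\lambda|y-x_j^\pm|$ is large for all $j$; in balls $B_{R/\lambda}(x_j^\pm)$ it is of order one, and no absorption is possible there.

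The missing idea is the compactness/blow-up step that the paper uses to finish: from $\|\phi_k\|_*=1$ and the improved-decay bound one deduces that the sup defining $\|\phi_k\|_*$ is essentially attained in some ball $B_{R/\lambda}(x_j^*)$, i.e. $\|\lambda^{-\frac{N-2m}{2}}\phi_k\|_{L^\infty(B_{R/\lambda}(x_j^*))}\geq\widetilde C>0$. Rescaling, $\tilde\phi_k(y)=\lambda^{-\frac{N-2m}{2}}\phi_k(\lambda^{-1}y+x_j^*)$ converges locally uniformly to a bounded solution $u$ of $(-\Delta)^mu=(m^*-1)U_{0,\Lambda}^{m^*-2}u$ in $\R^N$; the orthogonality conditions in $\mathbb H$ pass to the limit and force $u$ to be perpendicular to the kernel, hence $u=0$ by the nondegeneracy of the standard bubble, contradicting the lower bound $\widetilde C$. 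This is where the constraints in $\mathbb H$ do their real work — not merely in estimating the $c_l$, as in your write-up. Without this step your proof does not close.
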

\begin{proof}
Assume by contradiction that there exist $k\rightarrow\infty$, $\lambda_k\in \big[L_0k^{\frac{N-2m}{N-4m-\alpha}},L_1k^{\frac{N-2m}{N-4m-\alpha}}\big]$, $(\bar{r}_k,\bar{h}_k,\bar{y}_k'')$ satisfying \eqref{case1} and $\phi_k$ solving \eqref{lp} for $f=f_k$, $\lambda=\lambda_k$, $\bar{r}=\bar{r}_k$, $\bar{h}=\bar{h}_k$, $\bar{y}''=\bar{y}_k''$ with $\|f_k\|_{**}\rightarrow 0$ and $\|\phi_k\|_* \geq C>0$. Without loss of generality, we assume that $\|\phi_k\|_*=1$. For simplicity, we drop the subscript $k$.

From \eqref{lp}, we have
\begin{align*}
  |\phi(y)|\leq &C \int_{\mathbb{R}^N}\frac{1}{|y-z|^{N-2m}}Z_{\bar{r},\bar{h},\bar{y}'',\lambda}^{m^*-2}(z)|\phi(z)|dz+C \int_{\mathbb{R}^N}\frac{1}{|y-z|^{N-2m}}|f(z)|dz\\
  &+C \int_{\mathbb{R}^N}\frac{1}{|y-z|^{N-2m}}\Big|\sum\limits_{l=2}^Nc_l\sum\limits_{j=1}^k\Big
  (Z_{x_j^+,\lambda}^{m^*-2}Z_{j,l}^++Z_{x_j^-,\lambda}^{m^*-2}Z_{j,l}^-\Big)\Big|dz\\
  :=&I_1+I_2+I_3.
\end{align*}
By Lemma \ref{AppA3}, we deduce that
\begin{align*}
 I_1 \leq &C\|\phi\|_*\lambda^{\frac{N-2m}{2}}\int_{\mathbb{R}^N}\frac{Z_{\bar{r},\bar{h},\bar{y}'',\lambda}^{m^*-2}(z)}{|y-z|^{N-2m}}\sum\limits_{j=1}^k\Big(\frac{1}{(1+\lambda|z-x_j^+|)^{\frac{N-2m}{2}+\tau}}+
  \frac{1}{(1+\lambda|z-x_j^-|)^{\frac{N-2m}{2}+\tau}}\Big)
  dz\\
  \leq &C\|\phi\|_*\lambda^{\frac{N-2m}{2}}\sum\limits_{j=1}^k\Big(\frac{1}{(1+\lambda|y-x_j^+|)^{\frac{N-2m}{2}+\tau+\sigma}}+
  \frac{1}{(1+\lambda|y-x_j^-|)^{\frac{N-2m}{2}+\tau+\sigma}}\Big),
\end{align*}
where $\sigma>0$ is a small constant.

It follows from Lemma \ref{AppA2} that
\begin{align*}
   I_2\leq &C\|f\|_{**}\lambda^{\frac{N+2m}{2}}\int_{\mathbb{R}^N}\frac{1}{|y-z|^{N-2m}}\sum\limits_{j=1}^k\Big(\frac{1}{(1+\lambda|z-x_j^+|)^{\frac{N+2m}{2}+\tau}}+
  \frac{1}{(1+\lambda|z-x_j^-|)^{\frac{N+2m}{2}+\tau}}\Big)
  dz\\
  \leq &C\|f\|_{**}\lambda^{\frac{N-2m}{2}}\sum\limits_{j=1}^k\Big(\frac{1}{(1+\lambda|y-x_j^+|)^{\frac{N-2m}{2}+\tau}}+
  \frac{1}{(1+\lambda|y-x_j^-|)^{\frac{N-2m}{2}+\tau}}\Big).
\end{align*}

From Lemma \ref{AppA4}, we have
\begin{equation*}
 |Z_{j,2}^{\pm}|\leq C\lambda^{-\beta_1}Z_{x_j^\pm,\lambda},\quad |Z_{j,l}^{\pm}|\leq C\lambda Z_{x_j^\pm,\lambda},\quad l=3,4,\cdots,N,
\end{equation*}
where $\beta_1=\frac{\alpha}{N-2m}$. This with Lemma \ref{AppA2} yields
\begin{align*}
  I_3\leq & C\lambda^{\frac{N+2m}{2}+\eta_l}\sum\limits_{l=2}^N|c_l|\int_{\mathbb{R}^N}\frac{1}{|y-z|^{N-2m}}\sum\limits_{j=1}^k\Big(\frac{1}{(1+\lambda|z-x_j^+|)^{N+2m}}+
  \frac{1}{(1+\lambda|z-x_j^-|)^{N+2m}}\Big)
  dz\\
  \leq &C\lambda^{\frac{N-2m}{2}+\eta_l}\sum\limits_{l=2}^N|c_l|\sum\limits_{j=1}^k\Big(\frac{1}{(1+\lambda|y-x_j^+|)^{\frac{N-2m}{2}+\tau}}+
  \frac{1}{(1+\lambda|y-x_j^-|)^{\frac{N-2m}{2}+\tau}}\Big),
\end{align*}
where $\eta_2=-\beta_1$, $\eta_l=1$ for $l=3,4,\cdots,N$.

In the following, we estimate $c_l$, $l=2,3,\cdots,N$. Multiplying \eqref{lp} by $Z_{1,t}^+$ ($t=2,3,\cdots,N$), and integrating in $\mathbb{R}^N$, we have
\begin{align}\label{xiangu1}
  &\sum\limits_{l=2}^Nc_l\sum\limits_{j=1}^k\int_{\mathbb{R}^N}\Big
  (Z_{x_j^+,\lambda}^{m^*-2}Z_{j,l}^++Z_{x_j^-,\lambda}^{m^*-2}Z_{j,l}^-\Big)Z_{1,t}^+d y \nonumber\\
  =&\big\langle(-\Delta)^m \phi+V(r,y'')\phi-(m^*-1)Z_{\bar{r},\bar{h},\bar{y}'',\lambda}^{m^*-2}\phi,Z_{1,t}^+\big\rangle-\langle f, Z_{1,t}^+\rangle.
\end{align}
By the orthogonality, we get
\begin{equation}\label{xiangu2}
  \sum\limits_{j=1}^k\int_{\mathbb{R}^N}\Big
  (Z_{x_j^+,\lambda}^{m^*-2}Z_{j,l}^++Z_{x_j^-,\lambda}^{m^*-2}Z_{j,l}^-\Big)Z_{1,t}^+d y=(c_0+o(1))\delta_{l t}\lambda^{2\eta_l},
\end{equation}
for some constant $c_0>0$.

Using Lemmas \ref{AppA1} and \ref{AppA5}, we obtain
\begin{align}\label{toener1}
  &|\langle V(r,y'')\phi,Z_{1,t}^+\rangle| \nonumber\\
  \leq & C\|\phi\|_*\lambda^{{N-2m}+\eta_t}\int_{\mathbb{R}^N}\frac{1}{(1+\lambda|y-x_1^+|)^{N-2m}} \nonumber\\
  &\times\sum\limits_{j=1}^k\Big(\frac{1}{(1+\lambda|y-x_j^+|)^{\frac{N-2m}{2}+\tau}}+
  \frac{1}{(1+\lambda|y-x_j^-|)^{\frac{N-2m}{2}+\tau}}\Big)
  dy \nonumber\\
  \leq &C\|\phi\|_*\lambda^{{N-2m}+\eta_t}\int_{\mathbb{R}^N}\Big(\frac{1}{(1+\lambda|y-x_1^+|)^{\frac{3(N-2m)}{2}+\tau}}+\sum\limits_{j=2}^k\frac{1}{(1+\lambda|y-x_1^+|)^{N-2m}} \nonumber\\
  &\times \frac{1}{(1+\lambda|y-x_j^+|)^{\frac{N-2m}{2}+\tau}}+\sum\limits_{j=1}^k \frac{1}{(1+\lambda|y-x_1^+|)^{N-2m}}
  \frac{1}{(1+\lambda|y-x_j^-|)^{\frac{N-2m}{2}+\tau}}\Big)
  dy \nonumber\\
  \leq &C\|\phi\|_*\lambda^{{N-2m}+\eta_t}\bigg(\lambda^{-N}\max\Big\{\log\lambda,C,\lambda^{-\frac{N}{2}+3m-\tau}\Big\}+\lambda^{-N}\max\Big\{\log\lambda,C,\lambda^{-\frac{N}{2}+3m}\Big\}
  \sum\limits_{j=2}^k\frac{1}{(\lambda|x_j^+-x_1^+|)^{\tau}} \nonumber\\
  &+\lambda^{-N}\max\Big\{\log\lambda,C,\lambda^{-\frac{N}{2}+3m}\Big\}
  \sum\limits_{j=1}^k\frac{1}{(\lambda|x_j^--x_1^+|)^{\tau}}
  \bigg) \nonumber\\
   \leq &\frac{C\lambda^{\eta_t}\|\phi\|_*}{\lambda^{2m}}
   \Big(\max\Big\{\log\lambda,C,\lambda^{-\frac{N}{2}+3m-\tau}\Big\}+{{\max\Big\{\log\lambda,C,\lambda^{-\frac{N}{2}+3m}\Big\}}}
  \Big) \nonumber\\
  \leq & \frac{C\lambda^{\eta_t}\|\phi\|_*}{\lambda^{m+\varepsilon}},
\end{align}
where $\varepsilon>0$ is a small constant.

Similarly, we have
\begin{align*}
 | \langle f, Z_{1,t}^+\rangle|\leq &C\|f\|_{**}\lambda^{{N}+\eta_t}\int_{\mathbb{R}^N}\frac{1}{(1+\lambda|y-x_1^+|)^{N-2m}}\\
  &\times\sum\limits_{j=1}^k\Big(\frac{1}{(1+\lambda|y-x_j^+|)^{\frac{N+2m}{2}+\tau}}+
  \frac{1}{(1+\lambda|y-x_j^-|)^{\frac{N+2m}{2}+\tau}}\Big)
  dy\\
  \leq &C\lambda^{\eta_t}\|f\|_{**}.
\end{align*}

On the other hand, a direct computation gives
\begin{equation}\label{toener2}
  \big\langle(-\Delta)^m \phi-(m^*-1)Z_{\bar{r},\bar{h},\bar{y}'',\lambda}^{m^*-2}\phi,Z_{1,t}^+\big\rangle=O\Big(\frac{\lambda^{\eta_t}\|\phi\|_*}{\lambda^{m+\varepsilon}}\Big).
\end{equation}
Hence, we conclude that
\begin{equation*}
  \big\langle(-\Delta)^m \phi+V(r,y'')\phi-(m^*-1)Z_{\bar{r},\bar{h},\bar{y}'',\lambda}^{m^*-2}\phi,Z_{1,t}^+\big\rangle-\langle f, Z_{1,t}^+\rangle=O\Big(\lambda^{\eta_t}\big(\frac{\|\phi\|_*}{\lambda^{m+\varepsilon}}+\|f\|_{**}\big)\Big),
\end{equation*}
which together with \eqref{xiangu1} and \eqref{xiangu2} yields
\begin{equation*}
  c_l=\frac{1}{\lambda^{\eta_l}}\big(o(\|\phi\|_*)+O(\|f\|_{**})\big).
\end{equation*}
So
\begin{equation*}
  \|\phi\|_*\leq C\left(o(1)+\|f\|_{**}+\frac{\sum\limits_{j=1}^k\Big(\frac{1}{(1+\lambda|y-x_j^+|)^{\frac{N-2m}{2}+\tau+\sigma}}+
  \frac{1}{(1+\lambda|y-x_j^-|)^{\frac{N-2m}{2}+\tau+\sigma}}\Big)}{\sum\limits_{j=1}^k\Big(\frac{1}{(1+\lambda|y-x_j^+|)^{\frac{N-2m}{2}+\tau}}+
  \frac{1}{(1+\lambda|y-x_j^-|)^{\frac{N-2m}{2}+\tau}}\Big)}\right).
\end{equation*}
This with $\|\phi\|_*=1$ implies that there exists $R>0$ such that
\begin{equation}\label{xiandayu}
  \|\lambda^{-\frac{N-2m}{2}}\phi(y)\|_{L^\infty(B_{R/\lambda}(x_j^*))}\geq \widetilde{C}>0,
\end{equation}
for some $j$ with $x_j^*=x_j^+$ or $x_j^-$, where $\widetilde{C}$ is a positive constant. Furthermore, for this particular $j$, $\tilde{\phi}(y)=\lambda^{-\frac{N-2m}{2}}\phi(\lambda^{-1}y+x_j^*)$ converges uniformly on any compact set to a solution of the equation
\begin{equation}\label{xianlim}
  (-\Delta)^m u-(m^*-1)U_{0,\Lambda}^{m^*-2}u=0,\quad \text{in $\mathbb{R}^N$},
\end{equation}
for some $\Lambda\in [\Lambda_1,\Lambda_2]$ and $u$ is perpendicular to the kernel of  \eqref{xianlim}, according to the definition of $\mathbb{H}$. Hence, $u=0$, which contradicts \eqref{xiandayu}.
\end{proof}

With the help of Lemma \ref{xian}, the following result is a direct consequence of \cite[Proposition 4.1]{dFM}.
\begin{lemma}\label{dc}
There exists an integer $k_0>0$, such that for any $k\geq k_0$ and $f\in L^\infty(\mathbb{R}^N)$, problem \eqref{lp} has a unique solution $\phi=L_k(f)$. Moreover,
\begin{equation*}
  \|L_k(f)\|_*\leq C\|f\|_{**},\quad |c_l|\leq \frac{C}{\lambda^{\eta_l}}\|f\|_{**},
\end{equation*}
where $\eta_2=-\beta_1$, $\eta_l=1$ for $l=3,4,\cdots,N$.
\end{lemma}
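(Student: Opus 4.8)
The proof will be a routine consequence of the a priori estimate contained in Lemma \ref{xian}; the scheme is the classical one for finite-dimensional reduction and follows \cite[Proposition 4.1]{dFM} almost verbatim, so I only outline the structure. The plan is: (i) recast \eqref{lp} together with its orthogonality constraints as a single linear operator equation of the form ``identity minus compact'' on the Hilbert space $\mathbb{H}$; (ii) apply the Fredholm alternative, using Lemma \ref{xian} to rule out the kernel; (iii) upgrade Lemma \ref{xian} to the quantitative bound $\|\phi\|_*\le C\|f\|_{**}$, which simultaneously gives uniqueness and the stated estimate on $L_k(f)$; (iv) extract the bound on the multipliers $c_l$ by testing against $Z_{1,t}^{+}$.

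\emph{Reformulation and the Fredholm alternative.} Let $\mathcal J$ denote the inverse of $(-\Delta)^m$ on $D^{m,2}(\mathbb R^N)$ (admissible since $N>4m+1>2m$, with kernel $\sim|y-z|^{2m-N}$, which is exactly the kernel used in the pointwise representation in the proof of Lemma \ref{xian}), and let $\Pi_{\mathbb H}$ be the orthogonal projection onto $\mathbb H$. Then \eqref{lp}, supplemented with the $N-1$ orthogonality conditions built into $\mathbb H$, is equivalent to $(\mathrm{Id}-\mathcal T_k)\phi=\Pi_{\mathbb H}\mathcal J f$, where $\mathcal T_k\phi=\Pi_{\mathbb H}\mathcal J\big((m^*-1)Z_{\bar r,\bar h,\bar y'',\lambda}^{m^*-2}\phi-V(r,y'')\phi\big)$. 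Because $Z_{\bar r,\bar h,\bar y'',\lambda}^{m^*-2}$ enjoys the decay recorded in Appendix \ref{AppA} and $V$ is bounded, $\mathcal T_k$ is a compact operator on $\mathbb H$; this is the one point that is genuinely delicate on the unbounded domain $\mathbb R^N$, and it is handled exactly as in \cite{dFM} (e.g.\ by first solving the truncated problem on expanding balls, where Rellich's theorem applies, and passing to the limit using the estimate below, which is uniform in the truncation radius). The Fredholm alternative then reduces solvability to injectivity of $\mathrm{Id}-\mathcal T_k$.

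\emph{Injectivity, existence, uniqueness and the weighted bound.} Injectivity of $\mathrm{Id}-\mathcal T_k$ for $k\ge k_0$ is the content of Lemma \ref{xian}: a solution of \eqref{lp} with $f=0$ has $\|\phi\|_*=0$. By the usual contradiction-plus-normalization argument, Lemma \ref{xian} in fact yields the quantitative estimate $\|\phi\|_*\le C\|f\|_{**}$ for every solution of \eqref{lp}, uniformly in $k\ge k_0$ and in $(\lambda,\bar r,\bar h,\bar y'')$ ranging as in \eqref{case1}: otherwise there would be a sequence with $\|\phi_k\|_*=1$ and $\|f_k\|_{**}\to 0$, contradicting Lemma \ref{xian}. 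Together with the Fredholm step this gives existence of $\phi=L_k(f)$, the bound $\|L_k(f)\|_*\le C\|f\|_{**}$, and uniqueness (apply the estimate to the difference of two solutions).

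\emph{Estimate of the multipliers.} Finally, to bound the $c_l$ one tests \eqref{lp} against $Z_{1,t}^{+}$ for $t=2,\dots,N$, precisely as in the proof of Lemma \ref{xian}. By \eqref{xiangu2} the left-hand side is an almost-diagonal $(N-1)\times(N-1)$ linear system for $(c_l)$ with diagonal entries $(c_0+o(1))\lambda^{2\eta_l}$, $c_0>0$, while the right-hand side is, by \eqref{toener1}, \eqref{toener2} and the companion bound $|\langle f,Z_{1,t}^{+}\rangle|\le C\lambda^{\eta_t}\|f\|_{**}$, of order $O\big(\lambda^{\eta_t}(\lambda^{-m-\varepsilon}\|\phi\|_*+\|f\|_{**})\big)$. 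Inverting the system and inserting $\|\phi\|_*\le C\|f\|_{**}$ from the previous step gives $|c_l|\le C\lambda^{-\eta_l}\|f\|_{**}$ with $\eta_2=-\beta_1$ and $\eta_l=1$ for $l\ge 3$, which completes the proof. The only real obstacle is the compactness issue in the first step; all else is bookkeeping on top of Lemma \ref{xian}.
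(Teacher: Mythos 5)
Your proposal is correct and follows the same route as the paper, which itself delegates everything to Lemma~\ref{xian} plus a citation of \cite[Proposition 4.1]{dFM}. You have simply unfolded what that citation entails---Fredholm alternative with injectivity supplied by Lemma~\ref{xian}, the quantitative upgrade of the a priori estimate to $\|\phi\|_*\le C\|f\|_{**}$ via the standard normalization/contradiction argument, and the multiplier bound obtained by testing against $Z_{1,t}^{+}$ exactly as in the proof of Lemma~\ref{xian}---so this is the intended argument.
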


Now, we consider the following problem
\begin{align}\label{pp}
 \left\{
  \begin{array}{ll}
  \ \ \ (-\Delta)^m (Z_{\bar{r},\bar{h},\bar{y}'',\lambda}+\phi)+V(r,y'')(Z_{\bar{r},\bar{h},\bar{y}'',\lambda}+\phi)\\
  =(Z_{\bar{r},\bar{h},\bar{y}'',\lambda}+\phi)_+^{m^*-1}+\sum\limits_{l=2}^Nc_l\sum\limits_{j=1}^k\Big
  (Z_{x_j^+,\lambda}^{m^*-2}Z_{j,l}^++Z_{x_j^-,\lambda}^{m^*-2}Z_{j,l}^-\Big),
  \quad  \mbox{in $\mathbb{R}^N$},\\
  \phi\in \mathbb{H}.
    \end{array}
    \right.
\end{align}

The main result of this section is
\begin{proposition}\label{fixed}
There exists an integer $k_0>0$, such that for any $k\geq k_0$, $\lambda\in \big[L_0k^{\frac{N-2m}{N-4m-\alpha}},L_1k^{\frac{N-2m}{N-4m-\alpha}}\big]$, $(\bar{r},\bar{h},\bar{y}'')$ satisfies \eqref{case1}, problem \eqref{pp} has a unique solution $\phi=\phi_{\bar{r},\bar{h},\bar{y}'',\lambda}$ satisfying
\begin{equation*}
  \|\phi\|_*\leq C\big(\frac{1}{\lambda}\big)^{\frac{2m+1-\beta_1}{2}+\varepsilon},\quad |c_l|\leq C\big(\frac{1}{\lambda}\big)^{\frac{2m+1-\beta_1}{2}+\eta_l+\varepsilon},
\end{equation*}
where $\varepsilon>0$ is a small constant.
\end{proposition}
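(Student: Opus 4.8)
The plan is to solve \eqref{pp} by a fixed-point argument built on the linear theory of Lemma \ref{dc}. Writing the equation for $\phi$ in the form $\phi = A(\phi) := L_k\big(N(\phi) + \ell_k\big)$, where $\ell_k := (Z_{\bar r,\bar h,\bar y'',\lambda})_+^{m^*-1} - (-\Delta)^m Z_{\bar r,\bar h,\bar y'',\lambda} - V(r,y'')Z_{\bar r,\bar h,\bar y'',\lambda}$ is the error term and $N(\phi) := (Z_{\bar r,\bar h,\bar y'',\lambda}+\phi)_+^{m^*-1} - Z_{\bar r,\bar h,\bar y'',\lambda}^{m^*-1} - (m^*-1)Z_{\bar r,\bar h,\bar y'',\lambda}^{m^*-2}\phi$ is the nonlinear remainder. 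The operator $L_k$ is the bounded right inverse furnished by Lemma \ref{dc}, so $\|A(\phi)\|_* \le C\|N(\phi)+\ell_k\|_{**}$. I would then show $A$ is a contraction on the ball
\begin{equation*}
  \mathcal{B} := \Big\{\phi\in \mathbb{H} : \|\phi\|_* \le C_0\big(\tfrac{1}{\lambda}\big)^{\frac{2m+1-\beta_1}{2}+\varepsilon}\Big\}
\end{equation*}
for a suitably large constant $C_0$, which gives the unique solution and the stated bound on $\|\phi\|_*$; the bound on $|c_l|$ then follows from the second estimate in Lemma \ref{dc} applied with $f = N(\phi)+\ell_k$.

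The two quantitative inputs are: (i) an estimate $\|\ell_k\|_{**} \le C(1/\lambda)^{\frac{2m+1-\beta_1}{2}+\varepsilon}$ for the error term, and (ii) estimates for the nonlinear part, namely $\|N(\phi)\|_{**} \le C\|\phi\|_*^{\min\{m^*-1,2\}}$ and the corresponding Lipschitz bound $\|N(\phi_1)-N(\phi_2)\|_{**} \le C\big(\|\phi_1\|_*+\|\phi_2\|_*\big)^{\min\{m^*-2,1\}}\|\phi_1-\phi_2\|_*$. Input (i) is exactly the content of the error estimate lemma referenced as Lemma \ref{err} in the excerpt (the cut-off $\xi$, the interaction between the $2k$ bubbles, the decay gap encoded by $N>4m+1$ and the choice $\alpha = N-4m-\iota$, and the $V$-contribution each enter here); I would simply invoke it. Input (ii) is the standard pointwise inequality $|(a+b)_+^{p-1} - a^{p-1} - (p-1)a^{p-2}b| \le C(|b|^{p-1} + |a|^{p-2}|b|^2)$ for $p = m^*$ (using $m^*-1 \le 2$, equivalently $N \ge 4m$, so the quadratic term dominates near the bubbles), combined with the convolution estimates of Appendix \ref{AppA} (Lemmas \ref{AppA2}, \ref{AppA3}) to convert the pointwise bounds into $\|\cdot\|_{**}$ bounds; these are the same computations already used in the proof of Lemma \ref{xian} for the term $I_1$.

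Assembling: on $\mathcal{B}$ one has $\|N(\phi)\|_{**} \le C C_0^{2}(1/\lambda)^{(2m+1-\beta_1)+2\varepsilon}$, which is of strictly higher order in $1/\lambda$ than $\|\ell_k\|_{**}$, so $\|A(\phi)\|_* \le C\|\ell_k\|_{**} + o(1)\|\phi\|_* \le C_0(1/\lambda)^{\frac{2m+1-\beta_1}{2}+\varepsilon}$ once $C_0$ is fixed large and $k$ (hence $\lambda$) large; similarly the Lipschitz estimate gives $\|A(\phi_1)-A(\phi_2)\|_* \le o(1)\|\phi_1-\phi_2\|_*$, so $A$ is a contraction and the Banach fixed-point theorem applies. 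Uniqueness within the class and continuity/differentiability of $\phi_{\bar r,\bar h,\bar y'',\lambda}$ in the parameters follow in the usual way (e.g. via the implicit function theorem or directly from the contraction). I expect the main obstacle to be input (i): getting the error estimate sharp enough that $\|\ell_k\|_{**}$ really is $(1/\lambda)^{\frac{2m+1-\beta_1}{2}+\varepsilon}$ rather than something weaker. This requires carefully tracking the polyharmonic bubble interactions in Case 1 where $\sqrt{1-\bar h^2}\sim \lambda^{-\alpha/(N-2m)}$ forces the top and bottom circles close together, and controlling the terms created by the cut-off $\xi$ (where $(-\Delta)^m$ hits $\xi$, producing lower-order but slowly-decaying contributions that the smallness of $\delta$ and the positivity $r^{2m}V>0$ near $(r_0,y_0'')$ must absorb) — but since this is precisely Lemma \ref{err}, in this proof I would only need to cite it.
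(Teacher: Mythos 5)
Your proposal is correct and follows essentially the same route as the paper: rewrite the problem as a fixed point for $T(\phi)=L_k(N(\phi)+E_k)$, invoke the linear theory of Lemma \ref{dc}, the error estimate of Lemma \ref{err}, and the nonlinear estimate of Lemma \ref{non}, then run the contraction mapping theorem on a ball of radius $O\big((1/\lambda)^{\frac{2m+1-\beta_1}{2}+\varepsilon}\big)$ (the paper uses the marginally larger radius $O\big((1/\lambda)^{\frac{2m+1-\beta_1}{2}}\big)$ and recovers the sharper bound afterwards, but both work). One small slip in your aside: $m^*-1\le 2$ is equivalent to $N\ge 6m$ (not $N\ge 4m$), and in that regime the dominant term in the pointwise bound for $N(\phi)$ is $|\phi|^{m^*-1}$ rather than a quadratic term (the quadratic term $Z^{m^*-3}\phi^2$ appears in the complementary range $4m+1<N\le 6m-1$); your stated $\min\{m^*-1,2\}$ bounds are nevertheless correct and suffice for the contraction.
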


Rewrite \eqref{pp} as
\begin{align}\label{repp}
 \left\{
  \begin{array}{ll}
 \ \ \ (-\Delta)^m \phi+V(r,y'')\phi-(m^*-1)Z_{\bar{r},\bar{h},\bar{y}'',\lambda}^{m^*-2}\phi\\
  =N(\phi)+E_k+\sum\limits_{l=2}^Nc_l\sum\limits_{j=1}^k\Big
  (Z_{x_j^+,\lambda}^{m^*-2}Z_{j,l}^++Z_{x_j^-,\lambda}^{m^*-2}Z_{j,l}^-\Big),
  \quad  \mbox{in $\mathbb{R}^N$},\\
  \phi\in \mathbb{H},
    \end{array}
    \right.
\end{align}
where
\begin{equation*}
  N(\phi)=(Z_{\bar{r},\bar{h},\bar{y}'',\lambda}+\phi)_+^{m^*-1}-Z_{\bar{r},\bar{h},\bar{y}'',\lambda}^{m^*-1}-(m^*-1)Z_{\bar{r},\bar{h},\bar{y}'',\lambda}^{m^*-2}\phi,
\end{equation*}
and
\begin{align*}
  E_k=&\underbrace{Z_{\bar{r},\bar{h},\bar{y}'',\lambda}^{m^*-1}-\sum\limits_{j=1}^k\Big(\xi U_{x_j^+,\lambda}^{m^*-1}+\xi U_{x_j^-,\lambda}^{m^*-1}\Big)}_{:=I_1}-\underbrace{V(r,y'')Z_{\bar{r},\bar{h},\bar{y}'',\lambda}}_{:=I_2}-\underbrace{\sum\limits_{l=0}^{m-1}\binom m l (-\Delta)^{m-l}\xi(-\Delta)^lZ^*_{\bar{r},\bar{h},\bar{y}'',\lambda}}_{:=I_3}\\
  &+\underbrace{2m \sum\limits_{l=0}^{m-1}\binom {m-1} l \nabla \big((-\Delta)^{m-l-1}\xi\big)\cdot \nabla \big((-\Delta)^lZ^*_{\bar{r},\bar{h},\bar{y}'',\lambda}\big)}_{:=I_4}\\
  &+\underbrace{\sum\limits_{l=1}^{m-1}a_l\sum\limits^N_{i_1,i_2,\cdots,i_{m-l}=1}\sum\limits_{s=0}^{l-1}\binom {l-1} s \nabla\Big(\frac{\partial ^{m-l}\big((-\Delta)^sZ^*_{\bar{r},\bar{h},\bar{y}'',\lambda}\big)}{\partial y_{i_{m-l}}\cdots \partial y_{i_1}}\Big)\cdot \nabla \Big(\frac{\partial ^{m-l}\big((-\Delta)^{l-s-1}\xi\big)}{\partial y_{i_{m-l}}\cdots \partial y_{i_1}}\Big)}_{:=I_5},
\end{align*}
where $a_l$, $l=1,2,\cdots,m-1$ are some constants.

In the following, we will make use of the contraction mapping theorem to prove that \eqref{repp} is uniquely solvable under the condition that $\|\phi\|_*$ is small enough, so we need to estimate $N(\phi)$ and $E_k$, respectively.

\begin{lemma}\label{non}
If $N>4m+1$, then
\begin{equation*}
  \|N(\phi)\|_{**}\leq C\|\phi\|_*^{\min\{2,m^*-1\}}.
\end{equation*}
\end{lemma}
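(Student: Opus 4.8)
The plan is to estimate $N(\phi) = (Z+\phi)_+^{m^*-1} - Z^{m^*-1} - (m^*-1)Z^{m^*-2}\phi$ pointwise and then convert the pointwise bound into a $\|\cdot\|_{**}$ bound, where for brevity I write $Z = Z_{\bar{r},\bar{h},\bar{y}'',\lambda}$. The elementary inequality for the map $t\mapsto t_+^{p}$ with $p = m^*-1 = \frac{N+2m}{N-2m} > 1$ splits into two regimes according to whether $p\geq 2$ or $1<p<2$, which is exactly why the exponent $\min\{2,m^*-1\}$ appears. Concretely, for $p\geq 2$ one has $|N(\phi)| \leq C\big(Z^{p-2}\phi^2 + |\phi|^{p}\big)$, and for $1<p<2$ one has the cleaner bound $|N(\phi)| \leq C|\phi|^{p}$. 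Since $N > 4m+1$ forces $m^* - 1 = \frac{N+2m}{N-2m} < 2$ to fail in general (indeed $m^*-1<2 \iff N>6m$), both cases must be carried, and in either case the right-hand side is controlled by $C\big(Z^{p-2}\phi^2 + |\phi|^{\min\{2,p\}}\big)$ with the convention that the first term is absent when $p<2$.

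Next I would insert the defining bounds: $|\phi(y)| \leq \|\phi\|_* \lambda^{\frac{N-2m}{2}} W(y)$ where $W(y) = \sum_{j=1}^k\big((1+\lambda|y-x_j^+|)^{-\frac{N-2m}{2}-\tau} + (1+\lambda|y-x_j^-|)^{-\frac{N-2m}{2}-\tau}\big)$, together with the pointwise bound $Z(y) \leq C\lambda^{\frac{N-2m}{2}} \sum_j\big((1+\lambda|y-x_j^\pm|)^{-(N-2m)}\big)$ coming from the explicit form of $U_{x,\lambda}$ and $0\leq\xi\leq1$. Plugging these in, the term $|\phi|^{\min\{2,p\}}$ produces a power of $W$; here I would use the standard fact (a consequence of the estimates in Appendix \ref{AppA}, e.g. Lemma \ref{AppA5}-type summation bounds) that $W(y)^{\theta}$ for $\theta = \min\{2,p\}>1$ is, up to a constant depending only on $k$ being large and the separation of the points, dominated by $\big(\sum_j (1+\lambda|y-x_j^\pm|)^{-\frac{N+2m}{2}-\tau}\big)$ times an appropriate power of $\lambda$; the key is matching exponents, using that $\theta\big(\frac{N-2m}{2}+\tau\big) \geq \frac{N+2m}{2}+\tau$ when $\theta$ is close to $2$ and $\tau$ is chosen as in the paper, and absorbing the cross terms $|y-x_i^\pm|$ vs $|y-x_j^\pm|$ via the triangle inequality on the region decomposition $\Omega_j^\pm$. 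For the term $Z^{p-2}\phi^2$ (present only when $p\geq2$), one similarly gets $\lambda^{\frac{N-2m}{2}(p-2)}W^{p-2}\cdot \|\phi\|_*^2\lambda^{N-2m}W^2 = \|\phi\|_*^2\lambda^{\frac{(N-2m)p}{2}}W^p$, and the same summation lemma closes it with exponent $p = m^*-1 \geq 2 = \min\{2,m^*-1\}$ in that regime.

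The step I expect to be the main obstacle is the exponent bookkeeping in the summation estimate: verifying that after raising $W$ to the power $\theta$ the resulting sum of products of the form $\prod (1+\lambda|y-x_{j_i}^\pm|)^{-\mu_i}$ can genuinely be reabsorbed into a single sum $\sum_j(1+\lambda|y-x_j^\pm|)^{-\frac{N+2m}{2}-\tau}$ uniformly in $k$, using only $N>4m+1$ and the specific value $\tau = \frac{N-4m-\alpha}{N-2m-\alpha}$ together with $\alpha = N-4m-\iota$. This requires that the "leftover" decay exponent on each of the non-dominant factors be strictly positive and summable against the lattice of points $x_j^\pm$ on the two circles, i.e. $\sum_{j\neq i}(\lambda|x_j^\pm - x_i^\pm|)^{-\kappa} < \infty$ for the relevant $\kappa>0$, which is where the largeness of $k$ and the $\bar h$-scaling in \eqref{case1} enter. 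Once the pointwise inequality is multiplied by the weight $\lambda^{-\frac{N+2m}{2}}\big(\sum_j(1+\lambda|y-x_j^\pm|)^{-\frac{N+2m}{2}-\tau}\big)^{-1}$ defining $\|\cdot\|_{**}$ and the supremum over $y$ is taken, the powers of $\lambda$ cancel in the $|\phi|^{\min\{2,p\}}$ term and one is left precisely with $C\|\phi\|_*^{\min\{2,m^*-1\}}$; in the $Z^{p-2}\phi^2$ case one checks the remaining power of $\lambda$ is $\leq 1$ so it too is absorbed. I would organize the write-up as: (i) state and prove the elementary two-case inequality for $N(\phi)$; (ii) substitute the $\|\phi\|_*$ and pointwise $Z$ bounds; (iii) invoke the Appendix \ref{AppA} summation estimates to collapse the resulting sums; (iv) take the $\|\cdot\|_{**}$ supremum and collect constants.
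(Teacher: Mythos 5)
Your proposal is correct and follows essentially the same route as the paper: the two-case elementary inequality according to whether $m^*-1\gtrless 2$ (equivalently $N\lessgtr 6m$), substitution of the $\|\cdot\|_*$ bound for $\phi$ and the pointwise decay of $Z_{\bar r,\bar h,\bar y'',\lambda}$, and then the H\"older/summation step that collapses $W^{m^*-1}$ into the single weighted sum using the separation estimate $\sum_{j}(\lambda|x_j^{\pm}-x_1^+|)^{-\tau}\leq C$ on each $\Omega_1^+$, which is exactly where the choice of $\tau$ and the scaling of $\lambda$, $\bar h$ enter. The only cosmetic remark is that in the $Z^{m^*-3}\phi^2$ term the powers of $\lambda$ combine to exactly $\lambda^{\frac{N+2m}{2}}$, matching the $\|\cdot\|_{**}$ weight with nothing left over to absorb.
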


\begin{proof}
It's easy to see
\begin{align*}
|N(\phi)|\leq  \left\{
  \begin{array}{ll}
  C|\phi|^{m^*-1},\quad &\text{if $N\geq 6m$},\\
  CZ_{\bar{r},\bar{h},\bar{y}'',\lambda}^{m^*-3}\phi^2+C|\phi|^{m^*-1},\quad &\text{if $4m+1<N\leq 6m-1$}.\\
    \end{array}
    \right.
  \end{align*}
Recall the definition of $\Omega_j^+$, by symmetry, we assume that $y\in \Omega_1^+$. Then it follows
\begin{equation}\label{da2}
  |y-x_j^+|\geq C|x_j^+-x_1^+|,\quad |y-x_j^-|\geq C|x_j^--x_1^+|,\quad j=1,2,\cdots,k.
\end{equation}

If $N\geq 6m$, by \eqref{da2} and the H\"{o}lder inequality
\begin{equation*}
  \sum\limits_{j=1}^ka_j b_j\leq \Big(\sum\limits_{j=1}^ka_j^p\Big)^{\frac{1}{p}} \Big(\sum\limits_{j=1}^kb_j^q\Big)^{\frac{1}{q}},\quad \frac{1}{p}+\frac{1}{q}=1, \quad a_j,b_j\geq0,
\end{equation*}
we obtain
\begin{align}\label{new1}
  |N(\phi)| \leq &C\|\phi\|_*^{{m^*-1}}\lambda^{\frac{N+2m}{2}}\bigg(\sum\limits_{j=1}^k\Big(\frac{1}{(1+\lambda|y-x_j^+|)^{\frac{N-2m}{2}+\tau}}+
  \frac{1}{(1+\lambda|y-x_j^-|)^{\frac{N-2m}{2}+\tau}}\Big)\bigg)^{m^*-1} \nonumber\\
  \leq &C \|\phi\|_*^{{m^*-1}}\lambda^{\frac{N+2m}{2}}\bigg(\sum\limits_{j=1}^k\Big(\frac{1}{(1+\lambda|y-x_j^+|)^{\frac{N+2m}{2}+\tau}}+
  \frac{1}{(1+\lambda|y-x_j^-|)^{\frac{N+2m}{2}+\tau}}\Big)\bigg) \nonumber\\
  &\times \bigg(\sum\limits_{j=1}^k\Big(\frac{1}{(1+\lambda|y-x_j^+|)^{\tau}}+
  \frac{1}{(1+\lambda|y-x_j^-|)^{\tau}}\Big)\bigg)^{m^*-2}\nonumber\\
  \leq &C \|\phi\|_*^{{m^*-1}}\lambda^{\frac{N+2m}{2}}\bigg(\sum\limits_{j=1}^k\Big(\frac{1}{(1+\lambda|y-x_j^+|)^{\frac{N+2m}{2}+\tau}}+
  \frac{1}{(1+\lambda|y-x_j^-|)^{\frac{N+2m}{2}+\tau}}\Big)\bigg)\nonumber\\
  &\times \bigg(1+\sum\limits_{j=2}^k\frac{1}{(\lambda|x_j^+-x_1^+|)^{\tau}}+\sum\limits_{j=1}^k\frac{1}{(\lambda|x_j^--x_1^+|)^{\tau}}\bigg)^{m^*-2}\nonumber\\
  \leq &C \|\phi\|_*^{{m^*-1}}\lambda^{\frac{N+2m}{2}}\sum\limits_{j=1}^k\Big(\frac{1}{(1+\lambda|y-x_j^+|)^{\frac{N+2m}{2}+\tau}}+
  \frac{1}{(1+\lambda|y-x_j^-|)^{\frac{N+2m}{2}+\tau}}\Big).
\end{align}
Hence, we obtain
$\|N(\phi)\|_{**}\leq C\|\phi\|_*^{m^*-1}$ for $N\geq 6m$.

If $4m+1<N\leq 6m-1$, we have
\begin{align*}
  |N(\phi)| \leq &C\|\phi\|_*^{{m^*-1}}\lambda^{\frac{N+2m}{2}}\bigg(\sum\limits_{j=1}^k\Big(\frac{1}{(1+\lambda|y-x_j^+|)^{\frac{N-2m}{2}+\tau}}+
  \frac{1}{(1+\lambda|y-x_j^-|)^{\frac{N-2m}{2}+\tau}}\Big)\bigg)^{m^*-1}\\
  &+C\|\phi\|_*^{{2}}\lambda^{\frac{N+2m}{2}}\bigg(\sum\limits_{j=1}^k\Big(\frac{1}{(1+\lambda|y-x_j^+|)^{N-2m}}+
  \frac{1}{(1+\lambda|y-x_j^-|)^{N-2m}}\Big)\bigg)^{m^*-3}\\
  &\times \bigg(\sum\limits_{j=1}^k\Big(\frac{1}{(1+\lambda|y-x_j^+|)^{\frac{N-2m}{2}+\tau}}+
  \frac{1}{(1+\lambda|y-x_j^-|)^{\frac{N-2m}{2}+\tau}}\Big)\bigg)^{2}
  \\
  \leq &C\|\phi\|_*^{{2}}\lambda^{\frac{N+2m}{2}}\bigg(\sum\limits_{j=1}^k\Big(\frac{1}{(1+\lambda|y-x_j^+|)^{\frac{N-2m}{2}+\tau}}+
  \frac{1}{(1+\lambda|y-x_j^-|)^{\frac{N-2m}{2}+\tau}}\Big)\bigg)^{m^*-1}\\
  \leq &C \|\phi\|_*^{{2}}\lambda^{\frac{N+2m}{2}}\sum\limits_{j=1}^k\Big(\frac{1}{(1+\lambda|y-x_j^+|)^{\frac{N+2m}{2}+\tau}}+
  \frac{1}{(1+\lambda|y-x_j^-|)^{\frac{N+2m}{2}+\tau}}\Big).
\end{align*}
Thus $\|N(\phi)\|_{**}\leq C\|\phi\|_*^{2}$ for  $4m+1<N\leq 6m-1$.
  \end{proof}

Next, we estimate $E_k$.

  \begin{lemma}\label{err}
If $N>4m+1$, then there exists a small constant $\varepsilon>0$ such that
\begin{equation*}
  \|E_k\|_{**}\leq C\big(\frac{1}{\lambda}\big)^{\frac{2m+1-\beta_1}{2}+\varepsilon}.
\end{equation*}
\end{lemma}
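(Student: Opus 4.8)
The plan is to estimate each of the five pieces $I_1,\dots,I_5$ that make up $E_k$ separately in the $\|\cdot\|_{**}$-norm, and then add up. The dominant contributions will come from $I_1$ (the error in replacing the sum of bubbles by the power of the cut-off sum) and $I_2$ (the potential term), while $I_3$, $I_4$, $I_5$ are all supported in the annular region $\delta\le |(r,y'')-(r_0,y_0'')|\le 2\delta$ where $\xi$ and all its derivatives are bounded and the bubbles $U_{x_j^\pm,\lambda}$ decay like $\lambda^{-\frac{N-2m}{2}}$, so these three terms are exponentially (polynomially of arbitrarily high order in $\lambda^{-1}$) small and contribute nothing to the leading order. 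As usual one works in a single region $\Omega_1^+$ by symmetry, where \eqref{da2} holds, so that the interaction sums $\sum_{j\ge 2}(\lambda|x_j^+-x_1^+|)^{-\tau}$ and $\sum_{j\ge 1}(\lambda|x_j^--x_1^+|)^{-\tau}$ can be controlled using the basic estimates in Appendix \ref{AppA}; here the separation between the circles, namely $|x_1^+-x_1^-|=2\bar r\bar h$, matters and in Case 1 one uses $\bar h\to 1$.

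For $I_2=V(r,y'')Z_{\bar r,\bar h,\bar y'',\lambda}$, I would use that $\xi$ is supported where $r^{2m}V(r,y'')>0$ is smooth and bounded, Taylor-expand $V$ around the concentration points (or simply bound $|V|\le C$), and use Lemma \ref{AppA1}-type convolution estimates to show $\|I_2\|_{**}\le C\lambda^{-2m}$ up to logarithmic and interaction factors; since $\beta_1=\frac{\alpha}{N-2m}<1$, one has $2m>\frac{2m+1-\beta_1}{2}+\varepsilon$ for $\varepsilon$ small, so $I_2$ is acceptable. For $I_1$, I would split $\mathbb R^N$ into the region near $x_1^+$ (where the main bubble dominates) and its complement. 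Near $x_1^+$, $I_1$ behaves like the cross terms $U_{x_1^+,\lambda}^{m^*-2}\big(\sum_{(j,\pm)\ne(1,+)}U_{x_j^\pm,\lambda}\big)$ plus a term coming from $(\xi-1)$ which vanishes near $x_1^+$; the former is estimated by the standard bubble-interaction bound, whose size is governed by $(\lambda|x_j^+-x_1^+|)^{-\frac{N-2m}{2}}$ summed over $j$ and $(\lambda|x_1^+-x_1^-|)^{-\frac{N-2m}{2}}=(2\lambda\bar r\bar h)^{-\frac{N-2m}{2}}$ — the closest point is $x_1^-$ in Case 1 since $\bar h\to1$ keeps it far while $\sqrt{1-\bar h^2}=M_1\lambda^{-\alpha/(N-2m)}$ controls the intra-circle spacing. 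Balancing $k\sim\lambda^{(N-4m-\alpha)/(N-2m)}$ against $\lambda$ and $\sqrt{1-\bar h^2}\sim\lambda^{-\alpha/(N-2m)}$ yields exactly the exponent $\frac{2m+1-\beta_1}{2}+\varepsilon$.

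The main obstacle is the careful bookkeeping in estimating $I_1$: one must track how the error from the cut-off (the $\xi U_{x_j^\pm,\lambda}^{m^*-1}$ versus $(\xi U_{x_j^\pm,\lambda})^{m^*-1}$-type discrepancy and the genuine interaction among the $2k$ bubbles sitting on two nearby circles) converts, after convolution against $|y-z|^{-(N-2m)}$ and division by the weight in $\|\cdot\|_{**}$, into a clean power of $\lambda^{-1}$, uniformly for $y$ in all of $\mathbb R^N$ and for all admissible $(k,\lambda,\bar r,\bar h,\bar y'')$. The hypothesis $N>4m+1$ enters precisely here: it guarantees $\alpha=N-4m-\iota>0$ can be chosen with $\iota>0$ small, hence $\beta_1<1$ and $\tau=\frac{N-4m-\alpha}{N-2m-\alpha}\in(0,1)$ is admissible, which is what makes the interaction sums converge and the final exponent $\frac{2m+1-\beta_1}{2}+\varepsilon$ strictly positive and larger than the competing $\varepsilon$-loss. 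Once $I_1$ and $I_2$ are pinned down and $I_3,I_4,I_5$ dismissed as negligible, the lemma follows by the triangle inequality.
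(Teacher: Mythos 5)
Your overall architecture --- splitting $E_k$ into $I_1,\dots,I_5$, working in $\Omega_1^+$ by symmetry, controlling the interaction sums via Lemma \ref{AppA5}, and reading the exponent off the balance $k\sim\lambda^{\frac{N-4m-\alpha}{N-2m}}$, $\sqrt{1-\bar h^2}\sim\lambda^{-\frac{\alpha}{N-2m}}$ --- is exactly the paper's, and your treatment of $I_1$ (cross term $U_{x_1^+,\lambda}^{m^*-2}\sum U_{x_j^\pm,\lambda}$ plus a H\"older estimate on the remaining power, both fed into Lemma \ref{AppA5}) is correct. The gap is in how you justify the other four pieces, and in where you locate the use of the hypothesis $N>4m+1$.

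For $I_2$ you claim $\|I_2\|_{**}\le C\lambda^{-2m}$ up to harmless factors. But $VZ_{\bar r,\bar h,\bar y'',\lambda}$ decays only like $(1+\lambda|y-x_j^\pm|)^{-(N-2m)}$, whereas the weight in $\|\cdot\|_{**}$ demands decay of order $\frac{N+2m}{2}+\tau$; for $4m+1<N<6m$ one has $N-2m<\frac{N+2m}{2}+\tau$, so at distance $O(1)$ from the bubbles (still inside $\operatorname{supp}\xi$) the weighted quotient is of size $\lambda^{-2m}\cdot\lambda^{\frac{6m-N}{2}+\tau}=\lambda^{-\frac{N-2m}{2}+\tau}$, not $\lambda^{-2m}$, and this loss is a genuine power, not logarithmic. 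The inequality $\frac{N-2m}{2}-\tau>\frac{2m+1-\beta_1}{2}+\varepsilon$ is precisely where the paper invokes $N>4m+1$ together with $\iota$ small --- not, as you assert, in making the interaction sums of $I_1$ converge (the $I_{11}$, $I_{12}$ estimates close for any $N>4m$). Likewise $I_3,I_4,I_5$ are not ``polynomially small of arbitrarily high order'' in $\|\cdot\|_{**}$: on the annulus $\delta\le|(r,y'')-(r_0,y_0'')|\le2\delta$ where the derivatives of $\xi$ live, the numerator is indeed $O(\lambda^{-\frac{N-2m}{2}})$ pointwise, but the weight there is itself only $O(\lambda^{-\tau})$, so the quotient is again of the borderline size $\lambda^{-\frac{N-2m}{2}+\tau}$ and requires the same dimension restriction (the paper's inequality $N-2m-(\frac{N+2m}{2}+\tau)>-\frac{2m-1+\beta_1}{2}+\varepsilon$, proved by playing $\frac{1}{\lambda}\le\frac{C}{1+\lambda|y-x_j^\pm|}$ on $\{\xi\ne0\}$ against $\frac{1}{1+\lambda|y-x_j^\pm|}\le\frac{C}{\lambda}$ on $\{\nabla\xi\ne0\}$). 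These three terms cannot be dismissed; each must be checked against the weight with the same care as $I_2$, and your proposal as written would not detect that the lemma becomes delicate exactly as $N\downarrow 4m+1$.
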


\begin{proof}
By symmetry, we assume that $y\in \Omega_1^+$. Then
\begin{equation}\label{da1}
   |y-x_j^-|\geq |y-x_j^+| \geq |y-x_1^+|,\quad j=1,2,\cdots,k.
\end{equation}
For $I_1$, we have
\begin{align*}
  |I_1|=&\Big|\Big(\sum\limits_{j=1}^k\big(\xi U_{x_j^+,\lambda}+\xi U_{x_j^-,\lambda}\big)\Big)^{m^*-1}-\sum\limits_{j=1}^k\Big(\xi U_{x_j^+,\lambda}^{m^*-1}+\xi U_{x_j^-,\lambda}^{m^*-1}\Big)\Big|\\
  \leq &C U_{x_1^+,\lambda}^{m^*-2}\Big( \sum\limits_{j=2}^kU_{x_j^+,\lambda}+\sum\limits_{j=1}^k U_{x_j^-,\lambda}\Big)+C\Big( \sum\limits_{j=2}^kU_{x_j^+,\lambda}+\sum\limits_{j=1}^k U_{x_j^-,\lambda}\Big)^{m^*-1}\\
  \leq &C \lambda^{\frac{N+2m}{2}}\frac{1}{(1+\lambda|y-x_1^+|)^{4m}}\Big(\sum\limits_{j=2}^k\frac{1}{(1+\lambda|y-x_j^+|)^{N-2m}}+
  \sum\limits_{j=1}^k\frac{1}{(1+\lambda|y-x_j^-|)^{N-2m}}\Big)\\
  &+C\lambda^{\frac{N+2m}{2}}\bigg(\sum\limits_{j=2}^k\frac{1}{(1+\lambda|y-x_j^+|)^{N-2m}}+
  \sum\limits_{j=1}^k\frac{1}{(1+\lambda|y-x_j^-|)^{N-2m}}\bigg)^{m^*-1}\\
  :=&I_{11}+I_{12}.
\end{align*}

For the term $I_{11}$, if $N-2m\geq \frac{N+2m}{2}-\tau$, by \eqref{da2}, \eqref{da1}, and Lemma \ref{AppA5}, we have
\begin{align*}
  I_{11}\leq &C \lambda^{\frac{N+2m}{2}}\frac{1}{(1+\lambda|y-x_1^+|)^{\frac{N+2m}{2}+\tau}}\Big(\sum\limits_{j=2}^k\frac{1}{(1+\lambda|y-x_j^+|)^{\frac{N+2m}{2}-\tau}}+
  \sum\limits_{j=1}^k\frac{1}{(1+\lambda|y-x_j^-|)^{\frac{N+2m}{2}-\tau}}\Big)\\
  \leq &C \lambda^{\frac{N+2m}{2}}\frac{1}{(1+\lambda|y-x_1^+|)^{\frac{N+2m}{2}+\tau}}\Big(\sum\limits_{j=2}^k\frac{1}{(\lambda|x_j^+-x_1^+|)^{\frac{N+2m}{2}-\tau}}+
  \sum\limits_{j=1}^k\frac{1}{(\lambda|x_j^--x_1^+|)^{\frac{N+2m}{2}-\tau}}\Big)\\
  \leq &C \lambda^{\frac{N+2m}{2}}\frac{1}{(1+\lambda|y-x_1^+|)^{\frac{N+2m}{2}+\tau}}\big(\frac{1}{\lambda}\big)^{\frac{2m}{N-2m}(\frac{N+2m}{2}-\tau)}\\
  \leq &C \lambda^{\frac{N+2m}{2}}\frac{1}{(1+\lambda|y-x_1^+|)^{\frac{N+2m}{2}+\tau}}\big(\frac{1}{\lambda}\big)^{\frac{2m+1-\beta_1}{2}+\varepsilon}.
\end{align*}
If $N-2m< \frac{N+2m}{2}-\tau$, which implies that $4m>\frac{N+2m}{2}+\tau$, then we obtain
\begin{align*}
  I_{11}\leq &C \lambda^{\frac{N+2m}{2}}\frac{1}{(1+\lambda|y-x_1^+|)^{\frac{N+2m}{2}+\tau}}\Big(\sum\limits_{j=2}^k\frac{1}{(1+\lambda|y-x_1^+|)^{N-2m}}+
  \sum\limits_{j=1}^k\frac{1}{(1+\lambda|y-x_1^-|)^{N-2m}}\Big)\\
  \leq &C \lambda^{\frac{N+2m}{2}}\frac{1}{(1+\lambda|y-x_1^+|)^{\frac{N+2m}{2}+\tau}}\big(\frac{1}{\lambda}\big)^{2m}\\
  \leq &C \lambda^{\frac{N+2m}{2}}\frac{1}{(1+\lambda|y-x_1^+|)^{\frac{N+2m}{2}+\tau}}\big(\frac{1}{\lambda}\big)^{\frac{2m+1-\beta_1}{2}+\varepsilon}.
\end{align*}
Hence,
\begin{equation}\label{err1}
  \|I_{11}\|_{**}\leq C\big(\frac{1}{\lambda}\big)^{\frac{2m+1-\beta_1}{2}+\varepsilon}.
\end{equation}

As for $I_{12}$, using \eqref{da2}, the H\"{o}lder inequality and Lemma \ref{AppA5}, we get
\begin{align*}
  I_{12}\leq  &C\lambda^{\frac{N+2m}{2}}\Big(\sum\limits_{j=2}^k\frac{1}{(1+\lambda|y-x_j^+|)^{\frac{N+2m}{2}+\tau}}\Big)\bigg(\sum\limits_{j=2}^k
  \frac{1}{(1+\lambda|y-x_j^+|)^{\frac{N+2m}{4m}(\frac{N-2m}{2}-\frac{N-2m}{N+2m}\tau)}}\bigg)^{m^*-2}\\
  &+C\lambda^{\frac{N+2m}{2}}\Big(\sum\limits_{j=1}^k\frac{1}{(1+\lambda|y-x_j^-|)^{\frac{N+2m}{2}+\tau}}\Big)\bigg(\sum\limits_{j=1}^k
  \frac{1}{(1+\lambda|y-x_j^-|)^{\frac{N+2m}{4m}(\frac{N-2m}{2}-\frac{N-2m}{N+2m}\tau)}}\bigg)^{m^*-2}\\
  \leq  &C\lambda^{\frac{N+2m}{2}}\Big(\sum\limits_{j=2}^k\frac{1}{(1+\lambda|y-x_j^+|)^{\frac{N+2m}{2}+\tau}}\Big)\bigg(\sum\limits_{j=2}^k
  \frac{1}{(\lambda|x_j^+-x_1^+|)^{\frac{N+2m}{4m}(\frac{N-2m}{2}-\frac{N-2m}{N+2m}\tau)}}\bigg)^{m^*-2}\\
  &+C\lambda^{\frac{N+2m}{2}}\Big(\sum\limits_{j=1}^k\frac{1}{(1+\lambda|y-x_j^-|)^{\frac{N+2m}{2}+\tau}}\Big)\bigg(\sum\limits_{j=1}^k
  \frac{1}{(\lambda|x_j^--x_1^+|)^{\frac{N+2m}{4m}(\frac{N-2m}{2}-\frac{N-2m}{N+2m}\tau)}}\bigg)^{m^*-2}\\
  \leq  &C\lambda^{\frac{N+2m}{2}}\Big(\sum\limits_{j=2}^k\frac{1}{(1+\lambda|y-x_j^+|)^{\frac{N+2m}{2}+\tau}}+\sum\limits_{j=1}^k\frac{1}{(1+\lambda|y-x_j^-|)^{
  \frac{N+2m}{2}+\tau}}\Big)\big(\frac{1}{\lambda}\big)^{\frac{2m}{N-2m}(\frac{N+2m}{2}-\tau)}\\
  \leq &C \lambda^{\frac{N+2m}{2}}\Big(\sum\limits_{j=2}^k\frac{1}{(1+\lambda|y-x_j^+|)^{\frac{N+2m}{2}+\tau}}+\sum\limits_{j=1}^k\frac{1}{(1+\lambda|y-x_j^-|)^{
  \frac{N+2m}{2}+\tau}}\Big)\big(\frac{1}{\lambda}\big)^{\frac{2m+1-\beta_1}{2}+\varepsilon}.
\end{align*}
Thus
\begin{equation}\label{err2}
  \|I_{12}\|_{**}\leq C\big(\frac{1}{\lambda}\big)^{\frac{2m+1-\beta_1}{2}+\varepsilon}.
\end{equation}

For $I_2$, we have
\begin{align*}
  I_2\leq &C \lambda^{\frac{N-2m}{2}} \sum\limits_{j=1}^k\Big(\frac{\xi}{(1+\lambda|y-x_j^+|)^{N-2m}}+
  \frac{\xi}{(1+\lambda|y-x_j^-|)^{N-2m}}\Big) \nonumber\\
  \leq &C \big(\frac{1}{\lambda}\big)^{\frac{2m+1-\beta_1}{2}+\varepsilon}\lambda^{\frac{N+2m}{2}} \sum\limits_{j=1}^k\Big(\frac{\xi}{\lambda^{\frac{2m-1+\beta_1}{2}-\varepsilon}(1+\lambda|y-x_j^+|)^{N-2m}}+
  \frac{\xi}{\lambda^{\frac{2m-1+\beta_1}{2}-\varepsilon}(1+\lambda|y-x_j^-|)^{N-2m}}\Big)\nonumber\\
  \leq &C \big(\frac{1}{\lambda}\big)^{\frac{2m+1-\beta_1}{2}+\varepsilon}\lambda^{\frac{N+2m}{2}} \sum\limits_{j=1}^k\Big(\frac{1}{(1+\lambda|y-x_j^+|)^{\frac{N+2m}{2}+\tau}}+
  \frac{1}{(1+\lambda|y-x_j^-|)^{\frac{N+2m}{2}+\tau}}\Big),
\end{align*}
where we used the fact that for any $|(r,y'')-(r_0,y_0'')|\leq 2\delta$,
\begin{equation}\label{fact1}
  \frac{1}{\lambda}\leq \frac{C}{1+\lambda|y-x_j^\pm|},
\end{equation}
and
$\frac{2m-1+\beta_1}{2}-\varepsilon>\frac{N+2m}{2}+\tau-(N-2m)$ if $\varepsilon>0$ small enough since $N>4m+1$ and $\iota$ is small.
Therefore, we have
\begin{equation}\label{err3}
  \|I_{2}\|_{**}\leq C\big(\frac{1}{\lambda}\big)^{\frac{2m+1-\beta_1}{2}+\varepsilon}.
\end{equation}

For $I_3$, we see from Lemma \ref{AppA7} that
\begin{align*}
  |I_3|\leq &C\lambda^{\frac{N-2m}{2}+2l}\sum\limits_{j=1}^k\sum \limits_{l=0}^{m-1}\frac{|(-\Delta)^{m-l}\xi|}{(1+\lambda|y-x_j^+|)^{N-2m+2l}}\\
  \leq &C\big(\frac{1}{\lambda}\big)^{\frac{2m+1-\beta_1}{2}+\varepsilon}\lambda^{\frac{N+2m}{2}}\sum\limits_{j=1}^k\sum \limits_{l=0}^{m-1}\frac{\lambda^{2l-\frac{2m-1+\beta_1}{2}+\varepsilon}|(-\Delta)^{m-l}\xi|}{(1+\lambda|y-x_j^+|)^{N-2m+2l}}.
\end{align*}
If $l=0,1,\cdots,[\frac{2m-1+\beta_1}{4}]$, then $2l-\frac{2m-1+\beta_1}{2}+\varepsilon\leq 0$, this with \eqref{fact1} yields
\begin{align*}
  \frac{\lambda^{2l-\frac{2m-1+\beta_1}{2}+\varepsilon}|(-\Delta)^{m-l}\xi|}{(1+\lambda|y-x_j^+|)^{N-2m+2l}}=& \frac{|(-\Delta)^{m-l}\xi|}{(1+\lambda|y-x_j^+|)^{N-2m+2l}\lambda^{-2l+\frac{2m-1+\beta_1}{2}-\varepsilon}}
  \leq \frac{C}{(1+\lambda|y-x_j^+|)^{\frac{N+2m}{2}+\tau}}.
\end{align*}
If $l=[\frac{2m-1+\beta_1}{4}]+1,\cdots, m-1$, then $2l-\frac{2m-1+\beta_1}{2}+\varepsilon> 0$, thus
\begin{align*}
  \frac{\lambda^{2l-\frac{2m-1+\beta_1}{2}+\varepsilon}|(-\Delta)^{m-l}\xi|}{(1+\lambda|y-x_j^+|)^{N-2m+2l}}
  \leq \frac{C}{(1+\lambda|y-x_j^+|)^{\frac{N+2m}{2}+\tau}},
\end{align*}
since for any $\delta\leq |(r,y'')-(r_0,y_0'')|\leq 2\delta$,
\begin{equation}\label{fact2}
   \frac{1}{1+\lambda|y-x_j^\pm|}\leq \frac{C}{\lambda},
\end{equation}
and
$N-2m-(\frac{N+2m}{2}+\tau)>-\frac{2m-1+\beta_1}{2}+\varepsilon$ if $\varepsilon>0$ small enough.
Hence,
\begin{equation}\label{err4}
  \|I_{3}\|_{**}\leq C\big(\frac{1}{\lambda}\big)^{\frac{2m+1-\beta_1}{2}+\varepsilon}.
\end{equation}

Similarly, for $I_4$ and $I_5$, Lemma \ref{AppA7} together with \eqref{fact1} and \eqref{fact2} leads to
\begin{align*}
  |I_4|\leq &C\lambda^{\frac{N-2m}{2}+2l+1}\sum\limits_{j=1}^k\sum \limits_{l=0}^{m-1}\frac{|\nabla\big((-\Delta)^{m-l-1}\xi\big)|}{(1+\lambda|y-x_j^+|)^{N-2m+2l+1}}\\
  \leq &C\big(\frac{1}{\lambda}\big)^{\frac{2m+1-\beta_1}{2}+\varepsilon}\lambda^{\frac{N+2m}{2}}\sum\limits_{j=1}^k\sum \limits_{l=0}^{m-1}\frac{\lambda^{2l+1-\frac{2m-1+\beta_1}{2}+\varepsilon}|\nabla\big((-\Delta)^{m-l}\xi\big)|}{(1+\lambda|y-x_j^+|)^{N-2m+2l+1}}\\
  \leq& C\big(\frac{1}{\lambda}\big)^{\frac{2m+1-\beta_1}{2}+\varepsilon}\lambda^{\frac{N+2m}{2}}\sum\limits_{j=1}^k\frac{1}{(1+\lambda|y-x_j^+|)^{\frac{N+2m}{2}+\tau}},
\end{align*}
and
\begin{align*}
  |I_5|\leq &C \lambda^{\frac{N-2m}{2}+m+l}\sum\limits_{j=1}^k\sum\limits_{l=1}^{m-1}a_l\sum\limits^N_{i_1,i_2,\cdots,i_{m-l}=1}\sum\limits_{s=0}^{l-1}\Big| \nabla \Big(\frac{\partial ^{m-l}\big((-\Delta)^{l-s-1}\xi\big)}{\partial y_{i_{m-l}}\cdots \partial y_{i_1}}\Big)\Big|\frac{1}{(1+\lambda|y-x_j^+|)^{N-m+l}}\\
  \leq& C\big(\frac{1}{\lambda}\big)^{\frac{2m+1-\beta_1}{2}+\varepsilon}\lambda^{\frac{N+2m}{2}}\sum\limits_{j=1}^k\sum\limits_{l=1}^{m-1}a_l\sum\limits^N_{i_1,i_2,\cdots,i_{m-l}=1}\sum\limits_{s=0}^{l-1}
  \Big| \nabla \Big(\frac{\partial ^{m-l}\big((-\Delta)^{l-s-1}\xi\big)}{\partial y_{i_{m-l}}\cdots \partial y_{i_1}}\Big)\Big|\frac{\lambda^{m+l-\frac{2m-1+\beta_1}{2}+\varepsilon}}{(1+\lambda|y-x_j^+|)^{N-m+l}}\\
  \leq& C\big(\frac{1}{\lambda}\big)^{\frac{2m+1-\beta_1}{2}+\varepsilon}\lambda^{\frac{N+2m}{2}}\sum\limits_{j=1}^k\frac{1}{(1+\lambda|y-x_j^+|)^{\frac{N+2m}{2}+\tau}}.
\end{align*}
As a result, we obtain
\begin{equation}\label{err5}
  \|I_{4}\|_{**}\leq C\big(\frac{1}{\lambda}\big)^{\frac{2m+1-\beta_1}{2}+\varepsilon},\quad \|I_{5}\|_{**}\leq C\big(\frac{1}{\lambda}\big)^{\frac{2m+1-\beta_1}{2}+\varepsilon}.
\end{equation}
Combining \eqref{err1}, \eqref{err2}, \eqref{err3}, \eqref{err4}, \eqref{err5}, we derive the conclusion.
\end{proof}

Now we are ready to prove Proposition \ref{fixed}.

\vspace{.3cm}
\noindent{\bf Proof of Proposition \ref{fixed}.} Denote
\begin{equation*}
  \mathbb{E}=\Big\{\phi:\phi\in C(\mathbb{R}^N)\cap \mathbb{H},\quad \|\phi\|_*\leq C\big(\frac{1}{\lambda}\big)^{\frac{2m+1-\beta_1}{2}}\Big\}.
\end{equation*}
By Lemma \ref{dc}, the existence and properties of the solution $\phi$ to \eqref{repp} is equivalent to find a fixed point for
\begin{equation}\label{fixp}
  \phi=T(\phi):=L_k(N(\phi)+E_k).
\end{equation}
Hence, it is sufficient to prove that $T$ is a contraction map from $\mathbb{E}$ to $\mathbb{E}$. In fact, for any $\phi\in \mathbb{E}$, by Lemmas \ref{dc}, \ref{non} and \ref{err}, we have
\begin{align*}
  \|T(\phi)\|_*\leq& C\|L_k(N(\phi))\|_{*}+\|L_k(E_k)\|_{*}\leq C\|N(\phi)\|_{**}+C\|E_k\|_{**}\\
  \leq& C\|\phi\|_*^{\min\{2,m^*-1\}}+C\big(\frac{1}{\lambda}\big)^{\frac{2m+1-\beta_1}{2}+\varepsilon}\leq C\big(\frac{1}{\lambda}\big)^{\frac{2m+1-\beta_1}{2}}.
\end{align*}
This shows that $T$ maps from $\mathbb{E}$ to $\mathbb{E}$.

On the other hand, for any $\phi_1,\phi_2\in \mathbb{E}$, we have
\begin{align*}
  \|T(\phi_1)-T(\phi_2)\|_*\leq C\|L_k(N(\phi_1))-L_k(N(\phi_2))\|_{*}\leq C\|N(\phi_1)-N(\phi_2)\|_{**}.
\end{align*}
If $N\geq 6m$, by \eqref{new1}, we have
\begin{align*}
  |N(\phi_1)-N(\phi_2)|\leq &C|N'(\phi_1+\kappa(\phi_2-\phi_1))||\phi_1-\phi_2|\leq C\big(|\phi_1|^{m^*-2}+|\phi_2|^{m^*-2}\big)|\phi_1-\phi_2|\\
   \leq& C\big(\|\phi_1\|_*^{m^*-2}+\|\phi_2\|_*^{m^*-2}\big)\|\phi_1-\phi_2\|_*\lambda^{\frac{N+2m}{2}}\\
   &\times\bigg(\sum\limits_{j=1}^k\Big(\frac{1}{(1+\lambda|y-x_j^+|)^{\frac{N-2m}{2}+\tau}}+
  \frac{1}{(1+\lambda|y-x_j^-|)^{\frac{N-2m}{2}+\tau}}\Big)\bigg)^{m^*-1}\\
  \leq &C \big(\|\phi_1\|_*^{m^*-2}+\|\phi_2\|_*^{m^*-2}\big)\|\phi_1-\phi_2\|_*\lambda^{\frac{N+2m}{2}}\\
  &\times\sum\limits_{j=1}^k\Big(\frac{1}{(1+\lambda|y-x_j^+|)^{\frac{N+2m}{2}+\tau}}+
  \frac{1}{(1+\lambda|y-x_j^-|)^{\frac{N+2m}{2}+\tau}}\Big),
\end{align*}
that is
\begin{align*}
  \|T(\phi_1)-T(\phi_2)\|_*\leq C \big(\|\phi_1\|_*^{m^*-2}+\|\phi_2\|_*^{m^*-2}\big)\|\phi_1-\phi_2\|_*<\frac{1}{2}\|\phi_1-\phi_2\|_*.
\end{align*}
Therefore, $T$ is a contraction map from $\mathbb{E}$ to $\mathbb{E}$. The case $4m+1<N\leq 6m-1$ can be discussed in a similar way.

By the contraction mapping theorem, there exists a unique $\phi=\phi_{\bar{r},\bar{h},\bar{y}'',\lambda}$ such that \eqref{fixp} holds. Moreover, by Lemmas  \ref{dc}, \ref{non} and \ref{err}, we deduce
\begin{align*}
  \|\phi\|_*\leq C\|L_k(N(\phi))\|_{*}+\|L_k(E_k)\|_{*}\leq C\|N(\phi)\|_{**}+C\|E_k\|_{**}\leq C\big(\frac{1}{\lambda}\big)^{\frac{2m+1-\beta_1}{2}+\varepsilon},
\end{align*}
and
\begin{equation*}
  |c_l|\leq  \frac{C}{\lambda^{\eta_l}}(\|N(\phi)\|_{**}+\|E_k\|_{**})\leq C\big(\frac{1}{\lambda}\big)^{\frac{2m+1-\beta_1}{2}+\eta_l+\varepsilon},
\end{equation*}
for $l=2,3,\cdots,N$. This completes the proof. \qed

\section{Proof of Theorem \ref{th1}}\label{three}
Recall that the functional corresponding to \eqref{pro} is
\begin{align*}
 I(u)=\left\{
  \begin{array}{ll}
  \displaystyle\frac{1}{2}\int_{\mathbb{R}^N}\Big(|\Delta^{\frac{m}{2}}u|^2+V(r,y'')u^2\Big)dy-\frac{1}{m^*}\int_{\mathbb{R}^N}(u)_+^{m^*}dy,\quad &\text{if $m$ is even},\vspace{.3cm}\\
  \displaystyle \frac{1}{2}\int_{\mathbb{R}^N}\Big(|\nabla (\Delta^{\frac{m-1}{2}}u)|^2+V(r,y'')u^2\Big)dy-\frac{1}{m^*}\int_{\mathbb{R}^N}(u)_+^{m^*}dy,\quad &\text{if $m$ is odd}.
    \end{array}
    \right.
  \end{align*}

Let
$\phi=\phi_{\bar{r},\bar{h},\bar{y}'',\lambda}$ be the function obtained in Proposition \ref{fixed} and $u_k=Z_{\bar{r},\bar{h},\bar{y}'',\lambda}+\phi$. In this section, we will choose suitable $(\bar{r},\bar{h},\bar{y}'',\lambda)$ such that $u_k$ is a solution of problem \eqref{pro}. For this purpose, we need the following result.
\begin{proposition}
Assume that $(\bar{r},\bar{h},\bar{y}'',\lambda)$ satisfies
\begin{equation}\label{con1}
  \int_{D_\varrho}\big((-\Delta)^mu_k+V(r,y'')u_k-(u_k)_+^{m^*-1}\big)\langle y, \nabla u_k\rangle dy=0,
\end{equation}
\begin{equation}\label{con2}
  \int_{D_\varrho}\big((-\Delta)^mu_k+V(r,y'')u_k-(u_k)_+^{m^*-1}\big)\frac{\partial u_k}{\partial y_i} dy=0,\quad i=4,5,\cdots,N,
\end{equation}
and
\begin{equation}\label{con3}
  \int_{\mathbb{R}^N}\big((-\Delta)^mu_k+V(r,y'')u_k-(u_k)_+^{m^*-1}\big)\frac{\partial Z_{\bar{r},\bar{h},\bar{y}'',\lambda}}{\partial \lambda} dy=0,
\end{equation}
where $u_k=Z_{\bar{r},\bar{h},\bar{y}'',\lambda}+\phi$ and $D_\varrho=\big\{(r,y''):|(r,y'')-(r_0,y_0'')|\leq \varrho\big\}$ with $\varrho\in (2\delta,5\delta)$, then $c_l=0$ for $l=2,3,\cdots,N$.
\end{proposition}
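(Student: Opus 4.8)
The plan is to show that the linear system obtained by testing equation~\eqref{pp} against the functions appearing in the Pohozaev-type and orthogonality conditions forces all the multipliers $c_l$ to vanish. By construction $u_k=Z_{\bar r,\bar h,\bar y'',\lambda}+\phi$ solves
\begin{equation*}
  (-\Delta)^m u_k+V(r,y'')u_k-(u_k)_+^{m^*-1}=\sum_{l=2}^N c_l\sum_{j=1}^k\Big(Z_{x_j^+,\lambda}^{m^*-2}Z_{j,l}^++Z_{x_j^-,\lambda}^{m^*-2}Z_{j,l}^-\Big)=:\sum_{l=2}^N c_l\,\Xi_l,
\end{equation*}
so substituting this identity into \eqref{con1}, \eqref{con2} and \eqref{con3} turns the left-hand sides into $\sum_{l=2}^N c_l$ times the pairing of $\Xi_l$ with, respectively, $\langle y,\nabla u_k\rangle$ (on $D_\varrho$), $\partial u_k/\partial y_i$ (on $D_\varrho$) for $i=4,\dots,N$, and $\partial Z_{\bar r,\bar h,\bar y'',\lambda}/\partial\lambda$ (on $\mathbb{R}^N$). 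Thus the conditions say precisely that the vector $(c_2,\dots,c_N)$ lies in the kernel of an $(N-1)\times(N-1)$ matrix $\mathcal A=(a_{tl})$ whose entries are these pairings, with the row index $t$ running over the $N-1$ test directions (dilation, the $N-3$ translations in $y''$, and the $\lambda$-derivative — matching the $N-1$ unknowns). It then suffices to prove that $\mathcal A$ is invertible for $k$ large, e.g.\ that it is diagonally dominant after suitable rescaling.

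The key steps, in order, are as follows. First, rewrite \eqref{con1} using $\langle y,\nabla u_k\rangle = \langle y',\nabla_{y'}u_k\rangle + \langle y'',\nabla_{y''}u_k\rangle$ and observe that, since the approximate bubbles are centered near the circles of radius $\bar r\sqrt{1-\bar h^2}$ at heights $\pm\bar r\bar h$ with $y''$-component $\bar y''$, the dilation $\langle y,\nabla u_k\rangle$ restricted to a neighborhood of $x_j^\pm$ is, to leading order, a combination of $\partial_\lambda$, $\partial_{\bar r}$, and the $\partial_{\bar y_i''}$ acting on $U_{x_j^\pm,\lambda}$ — this is where the cut-off $\xi\equiv1$ near the concentration set and the boundary terms on $D_\varrho$ (which are exponentially or polynomially small because $\varrho\in(2\delta,5\delta)$ keeps $\partial D_\varrho$ away from all $x_j^\pm$) are used. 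Second, compute the diagonal entries: $a_{tt}$ involves $\int \Xi_t\cdot Z_{1,t}^+$-type integrals, which by the orthogonality estimate \eqref{xiangu2} equal $(c_0+o(1))\lambda^{2\eta_t}$ for a fixed $c_0>0$, after accounting for the $k$ identical angular copies. Third, estimate the off-diagonal entries $a_{tl}$, $t\neq l$: these pair $Z_{x_j^\pm,\lambda}^{m^*-2}Z_{j,l}^\pm$ against a derivative in a different direction, and by the standard integral estimates in Appendix~A (the same ones used for \eqref{xiangu2} and \eqref{toener1}) they are of strictly smaller order than the corresponding diagonal entry, i.e.\ $a_{tl}=o(\lambda^{\eta_t}\lambda^{\eta_l})$ after the natural normalization. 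Fourth, also control the contribution of $\phi$: replacing $u_k$ by $Z_{\bar r,\bar h,\bar y'',\lambda}$ in the test functions produces an error governed by $\|\phi\|_*\le C\lambda^{-(2m+1-\beta_1)/2-\varepsilon}$ from Proposition~\ref{fixed}, again lower order. Conclude that $\mathcal A = \mathrm{diag}(c_0\lambda^{2\eta_t})(\mathrm{Id}+o(1))$, hence invertible for $k\ge k_0$, so $(c_2,\dots,c_N)=0$.

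The main obstacle is step one: carefully justifying that the Pohozaev integrals \eqref{con1}--\eqref{con2} over the bounded region $D_\varrho$ — as opposed to over all of $\mathbb{R}^N$ — can be converted into the clean orthogonality-type pairings, controlling all boundary integrals on $\partial D_\varrho$ arising from the many integrations by parts inherent in the polyharmonic operator $(-\Delta)^m$. One must check that each such boundary term, which involves $u_k$ and its derivatives up to order $2m-1$ evaluated on $\{|(r,y'')-(r_0,y_0'')|=\varrho\}$, is negligible compared to $\lambda^{2\eta_t}$; this follows because on $\partial D_\varrho$ every center $x_j^\pm$ is at distance bounded below by a positive constant, so $U_{x_j^\pm,\lambda}$ and all its derivatives there are $O(\lambda^{-(N-2m)/2})$ (times polynomial factors), which beats $\lambda^{2\eta_t}=\lambda^{-2\beta_1}$ or $\lambda^2$ for $k$, hence $\lambda$, large. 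A secondary technical point is bookkeeping the exact powers $\eta_t$ (recall $\eta_2=-\beta_1$ for the $\lambda$/dilation direction and $\eta_l=1$ for the translations) so that the normalization making $\mathcal A$ diagonally dominant is the right one; once the scaling is fixed, invertibility and the conclusion $c_l\equiv0$ are immediate.
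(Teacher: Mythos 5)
Your proposal follows essentially the same route as the paper: substitute the identity \eqref{pp} satisfied by $u_k$ into \eqref{con1}--\eqref{con3} to convert them into a homogeneous linear system for $(c_2,\dots,c_N)$, then show the coefficient matrix is, after the natural $\lambda^{\eta_l}$-rescaling, diagonally dominant and hence invertible (the paper's \eqref{pr1}--\eqref{pr3} are exactly your diagonal entries, and its two-step elimination of $c_3,\dots,c_N$ and then $c_2$ is your diagonal-dominance argument). One minor remark: the ``main obstacle'' you single out is not actually present in this proposition, because $Z_{x_j^{\pm},\lambda}=\xi U_{x_j^{\pm},\lambda}$ is supported in $D_{2\delta}\subset D_\varrho$, so the right-hand side of \eqref{pp} vanishes near $\partial D_\varrho$, the $D_\varrho$-integrals coincide with $\mathbb{R}^N$-integrals, and no integration by parts of $(-\Delta)^m$ (hence no boundary terms) is needed here; those boundary terms only enter later, when \eqref{con1}--\eqref{con2} are exploited in the proof of Theorem \ref{th1}.
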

\begin{proof}
Since $Z_{\bar{r},\bar{h},\bar{y}'',\lambda}=0$ in $\mathbb{R}^N\backslash D_\varrho$, we see that if \eqref{con1}-\eqref{con3} hold, then
\begin{equation}\label{con'}
  \sum\limits_{l=2}^Nc_l\sum\limits_{j=1}^k\int_{\mathbb{R}^N}\Big
  (Z_{x_j^+,\lambda}^{m^*-2}Z_{j,l}^++Z_{x_j^-,\lambda}^{m^*-2}Z_{j,l}^-\Big)vdy=0,
\end{equation}
for $v=\langle y, \nabla u_k\rangle$, $v=\frac{\partial u_k}{\partial y_i}$ ($i=4,5,\cdots,N$), and $v=\frac{\partial Z_{\bar{r},\bar{h},\bar{y}'',\lambda}}{\partial \lambda}$.

By direct computations, we can prove that
\begin{equation}\label{pr1}
  \sum\limits_{j=1}^k\int_{\mathbb{R}^N}\Big
  (Z_{x_j^+,\lambda}^{m^*-2}Z_{j,3}^++Z_{x_j^-,\lambda}^{m^*-2}Z_{j,3}^-\Big)\langle y', \nabla_{y'} Z_{\bar{r},\bar{h},\bar{y}'',\lambda}\rangle dy=2k\lambda^2 (a_1+o(1)),
\end{equation}
\begin{equation}\label{pr2}
  \sum\limits_{j=1}^k\int_{\mathbb{R}^N}\Big
  (Z_{x_j^+,\lambda}^{m^*-2}Z_{j,i}^++Z_{x_j^-,\lambda}^{m^*-2}Z_{j,i}^-\Big)\frac{\partial Z_{\bar{r},\bar{h},\bar{y}'',\lambda}}{\partial y_i}dy=2k \lambda^2(a_2+o(1)),\quad i=4,5,\cdots,N,
\end{equation}
and
\begin{equation}\label{pr3}
  \sum\limits_{j=1}^k\int_{\mathbb{R}^N}\Big
  (Z_{x_j^+,\lambda}^{m^*-2}Z_{j,2}^++Z_{x_j^-,\lambda}^{m^*-2}Z_{j,2}^-\Big)\frac{\partial Z_{\bar{r},\bar{h},\bar{y}'',\lambda}}{\partial \lambda}dy=\frac{2k }{\lambda^{2\beta_1}} (a_3+o(1)),
\end{equation}
for some constants $a_1\neq0$, $a_2\neq0$, and $a_3>0$.

Integrating by parts, we get
\begin{equation*}
  \sum\limits_{l=2}^Nc_l\sum\limits_{j=1}^k\int_{\mathbb{R}^N}\Big
  (Z_{x_j^+,\lambda}^{m^*-2}Z_{j,l}^++Z_{x_j^-,\lambda}^{m^*-2}Z_{j,l}^-\Big)vdy=o(k\lambda^2)\sum\limits_{l=3}^N|c_l|+o(k\lambda^{1-\beta_1}|c_2|),
\end{equation*}
for $v=\langle y, \nabla \phi_{\bar{r},\bar{h},\bar{y}'',\lambda}\rangle$ and $v=\frac{\partial \phi_{\bar{r},\bar{h},\bar{y}'',\lambda}}{\partial y_i}$ ($i=4,5,\cdots,N$).
It follows from \eqref{con'} that
\begin{equation}\label{con''}
  \sum\limits_{l=2}^Nc_l\sum\limits_{j=1}^k\int_{\mathbb{R}^N}\Big
  (Z_{x_j^+,\lambda}^{m^*-2}Z_{j,l}^++Z_{x_j^-,\lambda}^{m^*-2}Z_{j,l}^-\Big)vdy=o(k\lambda^2)\sum\limits_{l=3}^N|c_l|+o(k\lambda^{1-\beta_1}|c_2|),
\end{equation}
also holds for $v=\langle y, \nabla Z_{\bar{r},\bar{h},\bar{y}'',\lambda}\rangle$ and $v=\frac{\partial Z_{\bar{r},\bar{h},\bar{y}'',\lambda}}{\partial y_i}$ ($i=4,5,\cdots,N$).

Since
\begin{equation*}
  \langle y, \nabla Z_{\bar{r},\bar{h},\bar{y}'',\lambda}\rangle=\langle y', \nabla _{y'}Z_{\bar{r},\bar{h},\bar{y}'',\lambda}\rangle+\langle y'', \nabla _{y''}Z_{\bar{r},\bar{h},\bar{y}'',\lambda}\rangle,
\end{equation*}
we obtain
\begin{align}\label{ob1}
  &\sum\limits_{l=2}^Nc_l\sum\limits_{j=1}^k\int_{\mathbb{R}^N}\Big
  (Z_{x_j^+,\lambda}^{m^*-2}Z_{j,l}^++Z_{x_j^-,\lambda}^{m^*-2}Z_{j,l}^-\Big)\langle y, \nabla Z_{\bar{r},\bar{h},\bar{y}'',\lambda}\rangle dy \nonumber\\
  =&c_3\sum\limits_{j=1}^k\int_{\mathbb{R}^N}\Big
  (Z_{x_j^+,\lambda}^{m^*-2}Z_{j,3}^++Z_{x_j^-,\lambda}^{m^*-2}Z_{j,3}^-\Big)\langle y', \nabla_{y'} Z_{\bar{r},\bar{h},\bar{y}'',\lambda}\rangle dy+o(k\lambda^2)\sum\limits_{l=4}^N|c_l|+o(k\lambda^{1-\beta_1}|c_2|),
\end{align}
and
\begin{align}\label{ob2}
  &\sum\limits_{l=2}^Nc_l\sum\limits_{j=1}^k\int_{\mathbb{R}^N}\Big
  (Z_{x_j^+,\lambda}^{m^*-2}Z_{j,l}^++Z_{x_j^-,\lambda}^{m^*-2}Z_{j,l}^-\Big)\frac{\partial Z_{\bar{r},\bar{h},\bar{y}'',\lambda}}{\partial y_i} dy \nonumber\\
  =&c_i\sum\limits_{j=1}^k\int_{\mathbb{R}^N}\Big
  (Z_{x_j^+,\lambda}^{m^*-2}Z_{j,i}^++Z_{x_j^-,\lambda}^{m^*-2}Z_{j,i}^-\Big)\frac{\partial Z_{\bar{r},\bar{h},\bar{y}'',\lambda}}{\partial y_i} dy+o(k\lambda^2)\sum\limits_{l\neq2,i}|c_l|+o(k\lambda^{1-\beta_1}|c_2|),
\end{align}
for $i=4,5,\cdots,N$.

Combining \eqref{con''}, \eqref{ob1} and \eqref{ob2}, we are led to
\begin{align*}
  c_3\sum\limits_{j=1}^k\int_{\mathbb{R}^N}\Big
  (Z_{x_j^+,\lambda}^{m^*-2}Z_{j,3}^++Z_{x_j^-,\lambda}^{m^*-2}Z_{j,3}^-\Big)\langle y', \nabla_{y'} Z_{\bar{r},\bar{h},\bar{y}'',\lambda}\rangle dy=o(k\lambda^2)\sum\limits_{l=4}^N|c_l|+o(k\lambda^{1-\beta_1}|c_2|),
\end{align*}
and
\begin{align*}
  c_i\sum\limits_{j=1}^k\int_{\mathbb{R}^N}\Big
  (Z_{x_j^+,\lambda}^{m^*-2}Z_{j,i}^++Z_{x_j^-,\lambda}^{m^*-2}Z_{j,i}^-\Big)\frac{\partial Z_{\bar{r},\bar{h},\bar{y}'',\lambda}}{\partial y_i} dy=o(k\lambda^2)\sum\limits_{l=3}^N|c_l|+o(k\lambda^{1-\beta_1}|c_2|),\quad i=4,5,\cdots,N,
\end{align*}
which together with \eqref{pr1} and \eqref{pr2} yields
\begin{equation*}
  c_l=o\Big(\frac{|c_2|}{\lambda^{1+\beta_1}}\Big),\quad l=3,4,\cdots,N.
\end{equation*}

On the other hand, it follows from \eqref{con'} and \eqref{pr3} that
\begin{align*}
  0=&\sum\limits_{l=2}^Nc_l\sum\limits_{j=1}^k\int_{\mathbb{R}^N}\Big
  (Z_{x_j^+,\lambda}^{m^*-2}Z_{j,l}^++Z_{x_j^-,\lambda}^{m^*-2}Z_{j,l}^-\Big)\frac{\partial Z_{\bar{r},\bar{h},\bar{y}'',\lambda}}{\partial \lambda}dy\\
  =&c_2\sum\limits_{j=1}^k\int_{\mathbb{R}^N}\Big
  (Z_{x_j^+,\lambda}^{m^*-2}Z_{j,2}^++Z_{x_j^-,\lambda}^{m^*-2}Z_{j,2}^-\Big)\frac{\partial Z_{\bar{r},\bar{h},\bar{y}'',\lambda}}{\partial \lambda}dy+o\Big(\frac{k|c_2|}{\lambda^{2\beta_1}}\Big)\\
  =&\frac{2k }{\lambda^{2\beta_1}} (a_3+o(1))c_2,
\end{align*}
which implies that $c_2=0$. The proof is complete.
\end{proof}

\begin{lemma}\label{ener1}
We have
\begin{align*}
  &\int_{\mathbb{R}^N}\big((-\Delta)^mu_k+V(r,y'')u_k-(u_k)_+^{m^*-1}\big)\frac{\partial Z_{\bar{r},\bar{h},\bar{y}'',\lambda}}{\partial \lambda} dy\\
  =&2k\bigg(-\frac{B_1}{\lambda^{2m+1}}V(\bar{r},\bar{y}'')+\sum\limits_{j=2}^k\frac{B_2}{\lambda^{N-2m+1}|x_j^+-x_1^+|^{N-2m}}+
  \sum\limits_{j=1}^k\frac{B_2}{\lambda^{N-2m+1}|x_j^--x_1^+|^{N-2m}}+O\Big(\frac{1}{\lambda^{2m+1+\varepsilon}}\Big)\bigg)\\
  =&2k\bigg(-\frac{B_1}{\lambda^{2m+1}}V(\bar{r},\bar{y}'')+\frac{B_3k^{N-2m}}{\lambda^{N-2m+1}(\sqrt{1-\bar{h}^2})^{N-2m}}+
  \frac{B_4 k}{\lambda^{N-2m+1}\bar{h}^{N-2m-1}\sqrt{1-\bar{h}^2}}+O\Big(\frac{1}{\lambda^{2m+1+\varepsilon}}\Big)\bigg),
\end{align*}
where $B_1$, $B_2$ are given in Lemma \ref{ener2}, and $B_3$, $B_4$ are two positive constants.
\end{lemma}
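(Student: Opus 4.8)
The plan is to expand the integral $\int_{\mathbb{R}^N}\big((-\Delta)^mu_k+V(r,y'')u_k-(u_k)_+^{m^*-1}\big)\frac{\partial Z_{\bar{r},\bar{h},\bar{y}'',\lambda}}{\partial\lambda}\,dy$ by writing $u_k=Z_{\bar{r},\bar{h},\bar{y}'',\lambda}+\phi$ and using the equation \eqref{repp} satisfied by $\phi$. First I would substitute the equation so that the whole expression becomes $\int_{\mathbb{R}^N}\big(-E_k-N(\phi)-\sum_l c_l\sum_j(\cdots)\big)\frac{\partial Z}{\partial\lambda}\,dy$ plus the contribution of the terms that do not cancel; concretely, after using the relation $(-\Delta)^m Z+V Z-(m^*-1)Z^{m^*-2}\phi = Z^{m^*-1}+V Z+(\text{cut-off error terms})-(m^*-1)Z^{m^*-2}\phi$ one reduces everything to (i) the main term $\int_{\mathbb{R}^N}\big(Z_{\bar{r},\bar{h},\bar{y}'',\lambda}^{m^*-1}-(Z_{\bar{r},\bar{h},\bar{y}'',\lambda}+\phi)_+^{m^*-1}+(m^*-1)Z_{\bar{r},\bar{h},\bar{y}'',\lambda}^{m^*-2}\phi\big)\frac{\partial Z}{\partial\lambda}dy = -\int N(\phi)\frac{\partial Z}{\partial\lambda}dy$, (ii) the error term $\int E_k\frac{\partial Z}{\partial\lambda}dy$, (iii) the term $\int V(r,y'')Z\frac{\partial Z}{\partial\lambda}dy$, and (iv) the orthogonality term $\sum_l c_l\sum_j\int (Z_{x_j^+,\lambda}^{m^*-2}Z_{j,l}^++Z_{x_j^-,\lambda}^{m^*-2}Z_{j,l}^-)\frac{\partial Z}{\partial\lambda}dy$. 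The terms (i), (ii), (iv) are all lower order: using $|\frac{\partial Z}{\partial\lambda}|\le C\lambda^{-\beta_1}Z$ from Lemma \ref{AppA4}, together with Lemma \ref{non}, Lemma \ref{err}, Proposition \ref{fixed} and the weighted estimates of Lemmas \ref{AppA2}, \ref{AppA5}, one checks each is $O(\lambda^{-(2m+1)-\varepsilon})$ per bubble, hence absorbed into the error $O(\lambda^{-(2m+1+\varepsilon)})$ after multiplying by the $2k$ count.

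The heart of the proof is therefore the computation of the "genuine" leading terms coming from the interaction of the bubbles among themselves and with the potential, which I would extract from the energy expansion in Appendix \ref{AppB}. Using the symmetry (the $k$ bubbles on the top circle and the $k$ on the bottom circle are permuted by the group action, so one integrates over one fundamental region and multiplies by $2k$), and writing $\frac{\partial Z}{\partial\lambda}=\sum_j(\frac{\partial U_{x_j^+,\lambda}}{\partial\lambda}+\frac{\partial U_{x_j^-,\lambda}}{\partial\lambda})$ modulo cut-off contributions, the relevant pieces are: the self-interaction with the potential, giving $-\frac{B_1}{\lambda^{2m+1}}V(\bar{r},\bar{y}'')$ (here one Taylor-expands $V$ around $(\bar r,\bar y'')$ and uses $\int_{\mathbb{R}^N}U_{0,1}\frac{\partial U_{0,\lambda}}{\partial\lambda}\big|_{\lambda=1}$-type integrals, with $B_1$ the same constant as in Lemma \ref{ener2}); and the bubble–bubble interactions, where the standard estimate $\int U_{x_i,\lambda}^{m^*-1}\frac{\partial U_{x_j,\lambda}}{\partial\lambda}dy = \frac{B_2}{\lambda^{N-2m+1}|x_i-x_j|^{N-2m}}+O(\text{lower})$ produces the sums $\sum_{j=2}^k\frac{B_2}{\lambda^{N-2m+1}|x_j^+-x_1^+|^{N-2m}}$ and $\sum_{j=1}^k\frac{B_2}{\lambda^{N-2m+1}|x_j^--x_1^+|^{N-2m}}$, again with $B_2$ from Lemma \ref{ener2}. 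This is exactly the analogue of the standard reduced-energy derivative computation; I would quote Appendix \ref{AppB} for the precise constants rather than redo the integrals.

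The final step is to evaluate the two geometric sums explicitly using the positions $x_j^\pm$. For the "same-layer" sum, $|x_j^+-x_1^+|=2\bar r\sqrt{1-\bar h^2}\,\big|\sin\frac{(j-1)\pi}{k}\big|$, so $\sum_{j=2}^k|x_j^+-x_1^+|^{-(N-2m)}=\frac{1}{(2\bar r\sqrt{1-\bar h^2})^{N-2m}}\sum_{j=2}^k\big|\sin\frac{(j-1)\pi}{k}\big|^{-(N-2m)}$, and since $N-2m>1$ the last sum is $\sim c\,k^{N-2m}$ (dominated by the $j$ near $1$ and $k$), giving the term $\frac{B_3 k^{N-2m}}{\lambda^{N-2m+1}(\sqrt{1-\bar h^2})^{N-2m}}$. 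For the "cross-layer" sum, $|x_j^--x_1^+|^2=(2\bar r\sqrt{1-\bar h^2})^2\sin^2\frac{(j-1)\pi}{k}+(2\bar r\bar h)^2$, so when $\bar h$ is bounded away from $0$ (Cases 1, 2) the vertical gap $2\bar r\bar h$ keeps every summand comparable to $\bar h^{-(N-2m)}$ and the sum is $\sim k\,\bar h^{-(N-2m)}$; a slightly more careful estimate (or a direct one-dimensional comparison) yields the stated $\frac{B_4 k}{\lambda^{N-2m+1}\bar h^{N-2m-1}\sqrt{1-\bar h^2}}$, with the extra $\sqrt{1-\bar h^2}$ and the reduced exponent $N-2m-1$ coming from the $k$ summands whose horizontal separation is itself of order $\bar r\sqrt{1-\bar h^2}$ when $\bar h$ is small. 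I expect the main obstacle to be precisely this second sum: tracking uniformly (across Cases 1–3, i.e. $\bar h\to 1$, $\bar h$ separated, $\bar h\to 0$) how the balance between the horizontal spacing $\sim\bar r\sqrt{1-\bar h^2}/k$ and the vertical spacing $\sim\bar r\bar h$ controls the growth rate in $k$ and the power of $\bar h$, and verifying that the remainder from replacing the sum by its leading behavior is still $O(\lambda^{-(2m+1+\varepsilon)})$ after multiplication by $2k$ given the ranges of $\lambda$ and the scaling \eqref{case1} of $\sqrt{1-\bar h^2}$.
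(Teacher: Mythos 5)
Your overall strategy is the same as the paper's: isolate $\big\langle I'(Z_{\bar{r},\bar{h},\bar{y}'',\lambda}),\tfrac{\partial Z_{\bar{r},\bar{h},\bar{y}'',\lambda}}{\partial\lambda}\big\rangle$ as the main part, show the $\phi$-dependent remainders are $O(k\lambda^{-2m-1-\varepsilon})$, quote the energy expansion of Appendix \ref{AppB} (Lemma \ref{ener2}) for the constants $B_1,B_2$, and convert the two interaction sums with Lemma \ref{AppA6}; your third paragraph reproduces the content of that lemma, including the correct asymptotics $k^{N-2m}(1-\bar h^2)^{-(N-2m)/2}$ and $k\,\bar h^{-(N-2m-1)}(1-\bar h^2)^{-1/2}$.

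There is, however, one genuine bookkeeping error in your first paragraph that would derail a literal implementation. You cannot simultaneously substitute the equation \eqref{repp} for the linear-in-$\phi$ part and treat $\int_{\mathbb{R}^N} E_k\,\tfrac{\partial Z_{\bar{r},\bar{h},\bar{y}'',\lambda}}{\partial\lambda}\,dy$ as a lower-order error. By construction $E_k=Z_{\bar{r},\bar{h},\bar{y}'',\lambda}^{m^*-1}-(-\Delta)^mZ_{\bar{r},\bar{h},\bar{y}'',\lambda}-VZ_{\bar{r},\bar{h},\bar{y}'',\lambda}$, so $\int E_k\,\tfrac{\partial Z}{\partial\lambda}\,dy=-\big\langle I'(Z_{\bar{r},\bar{h},\bar{y}'',\lambda}),\tfrac{\partial Z}{\partial\lambda}\big\rangle$ is exactly (minus) the quantity carrying both leading contributions: the potential piece $-VZ$ and the bubble--bubble interaction piece $Z^{m^*-1}-\sum_j\big(\xi U_{x_j^+,\lambda}^{m^*-1}+\xi U_{x_j^-,\lambda}^{m^*-1}\big)$ are components of $E_k$, and only the cut-off commutators $I_3,I_4,I_5$ in $E_k$ are genuinely small. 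Declaring your item (ii) lower order while separately listing (iii) $\int VZ\tfrac{\partial Z}{\partial\lambda}$ as a main term therefore double-counts the potential and loses the $B_2$ sums; and if you substitute \eqref{repp} everywhere, all terms cancel and you recover only the tautology $\int(\cdots)\tfrac{\partial Z}{\partial\lambda}\,dy=\sum_lc_l\sum_j\int(\cdots)\tfrac{\partial Z}{\partial\lambda}\,dy$, which carries no information about $\lambda$. The fix is what the paper does: keep $\big\langle I'(Z),\tfrac{\partial Z}{\partial\lambda}\big\rangle$ intact and expand it via Lemma \ref{ener2}, and estimate the residual linear-in-$\phi$ term directly through \eqref{toener1}--\eqref{toener2} together with $\int N(\phi)\tfrac{\partial Z}{\partial\lambda}\,dy=O(k\lambda^{-2m-1-\varepsilon})$. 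With that correction your second and third paragraphs go through as written.
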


\begin{proof}
By symmetry, we have
\begin{align*}
  &\int_{\mathbb{R}^N}\big((-\Delta)^mu_k+V(r,y'')u_k-(u_k)_+^{m^*-1}\big)\frac{\partial Z_{\bar{r},\bar{h},\bar{y}'',\lambda}}{\partial \lambda} dy\\
  =&\Big\langle I'(Z_{\bar{r},\bar{h},\bar{y}'',\lambda}),\frac{\partial Z_{\bar{r},\bar{h},\bar{y}'',\lambda}}{\partial \lambda}\Big\rangle+2k
  \Big\langle (-\Delta)^m \phi+V(r,y'')\phi-(m^*-1)Z_{\bar{r},\bar{h},\bar{y}'',\lambda}^{m^*-2}\phi,\frac{\partial Z_{x_1^+,\lambda}}{\partial \lambda}\Big\rangle\\
  &-\int_{\mathbb{R}^N}\Big((Z_{\bar{r},\bar{h},\bar{y}'',\lambda}+\phi)_+^{m^*-1}-Z_{\bar{r},\bar{h},\bar{y}'',\lambda}^{m^*-1}-(m^*-1)Z_{\bar{r},\bar{h},\bar{y}'',\lambda}^{m^*-2}\phi\Big)\frac{\partial Z_{\bar{r},\bar{h},\bar{y}'',\lambda}}{\partial \lambda}  dy\\
  :=&\Big\langle I'(Z_{\bar{r},\bar{h},\bar{y}'',\lambda}),\frac{\partial Z_{\bar{r},\bar{h},\bar{y}'',\lambda}}{\partial \lambda}\Big\rangle+2kI_1-I_2.
\end{align*}
By \eqref{toener1} and \eqref{toener2}, we have
\begin{equation*}
 | I_1|=O\Big(\frac{\|\phi\|_*}{\lambda^{m+1+\varepsilon}}\Big)=O\Big(\frac{1}{\lambda^{2m+1+\varepsilon}}\Big).
\end{equation*}
If $N\geq 6m$,
we have
\begin{align*}
  |I_2|\leq & C\int_{\mathbb{R}^N}Z_{\bar{r},\bar{h},\bar{y}'',\lambda}^{m^*-3}\phi^2\Big|\frac{\partial Z_{\bar{r},\bar{h},\bar{y}'',\lambda}}{\partial \lambda}\Big|  dy\leq \frac{C}{\lambda^{\beta_1}}\int_{\mathbb{R}^N}Z_{\bar{r},\bar{h},\bar{y}'',\lambda}^{m^*-2}\phi^2 dy\nonumber\\
  \leq & \frac{C\lambda^N\|\phi\|_*^2}{\lambda^{\beta_1}}\int_{\mathbb{R}^N} \bigg(\sum\limits_{j=1}^k\Big(\frac{1}{(1+\lambda|y-x_j^+|)^{{N-2m}}}+\frac{1}{(1+\lambda|y-x_j^-|)^{{N-2m}}}\Big)\bigg)^{m^*-2} \nonumber\\&\times
  \bigg(\sum\limits_{j=1}^k\Big(\frac{1}{(1+\lambda|y-x_j^+|)^{\frac{N-2m}{2}+\tau}}+\frac{1}{(1+\lambda|y-x_j^-|)^{\frac{N-2m}{2}+\tau}}\Big)\bigg)^{2}dy \nonumber\\
  \leq & \frac{C\lambda^N\|\phi\|_*^2}{\lambda^{\beta_1}}\int_{\mathbb{R}^N} \sum\limits_{j=1}^k\Big(\frac{1}{(1+\lambda|y-x_j^+|)^{{4m}}}+\frac{1}{(1+\lambda|y-x_j^-|)^{{4m}}}\Big) \nonumber\\&\times
  \sum\limits_{j=1}^k\Big(\frac{1}{(1+\lambda|y-x_j^+|)^{{N-2m}+2\tau}}+\frac{1}{(1+\lambda|y-x_j^-|)^{{N-2m}+2\tau}}\Big)dy
\nonumber\\ \leq & \frac{Ck\|\phi\|_*^2}{\lambda^{\beta_1}}=O\Big(\frac{k}{\lambda^{2m+1+\varepsilon}}\Big).
\end{align*}
Similarly, we can prove the case $4m+1 < N \leq 6m - 1$.
Combining Lemmas \ref{AppA6} and \ref{ener2}, we finish the proof.
\end{proof}

By Lemma \ref{AppA8}, we have
\begin{equation}\label{con11}
  \int_{D_\varrho}(-\Delta)^mu_k\langle y, \nabla u_k\rangle dy=\frac{1}{2}\int_{\partial D_\varrho}f_m(u_k,u_k) d\sigma-\frac{N-2m}{2}\int_{D_\varrho}u_k(-\Delta)^mu_kdy.
\end{equation}
Integrating by parts, we obtain
\begin{equation}\label{con12}
   \int_{D_\varrho}V(y)u_k\langle y, \nabla u_k\rangle dy= \frac{1}{2}\int_{\partial D_\varrho}\varrho V(y)u_k^2d\sigma-\frac{1}{2} \int_{D_\varrho}\langle y, \nabla V(y)\rangle u_k^2dy- \frac{N}{2}\int_{D_\varrho}V(y)u_k^2 dy,
\end{equation}
and
\begin{equation}\label{con13}
   \int_{D_\varrho}(u_k)^{m^*-1}_+\langle y, \nabla u_k\rangle dy= \frac{1}{m^*}\int_{\partial D_\varrho}\varrho (u_k)_+^{m^*}d\sigma- \frac{N}{m^*}\int_{D_\varrho}(u_k)_+^{m^*} dy.
\end{equation}
Combining \eqref{con11}, \eqref{con12} and \eqref{con13}, we know that \eqref{con1} is equivalent to
\begin{align}\label{trans1}
  &\frac{2m-N}{2}\int_{D_\varrho}u_k(-\Delta)^mu_kdy-\frac{1}{2} \int_{D_\varrho}\langle y, \nabla V(y)\rangle u_k^2dy- \frac{N}{2}\int_{D_\varrho}V(y)u_k^2 dy+\frac{N}{m^*}\int_{D_\varrho}(u_k)_+^{m^*} dy \nonumber\\
  =&O\bigg(\int_{\partial D_\varrho}\Big(\phi^2+|\phi|^{m^*}+\sum\limits_{j=1}^{2m-1}|\nabla^j \phi ||\nabla ^{2m-j}\phi|+\sum\limits_{j=0}^{2m-1}|\nabla^j \phi ||\nabla ^{2m-j-1}\phi|\Big)d\sigma\bigg),
\end{align}
since $u_k=\phi$ on $\partial D_\varrho$.

Similarly, for $i=4,5,\cdots,N$, Lemma \ref{AppA9} shows
\begin{equation}\label{con21}
  \int_{D_\varrho}(-\Delta)^mu_k\frac{\partial u_k}{\partial y_i} dy=\frac{1}{2}\int_{\partial D_\varrho}g_{m,i}(u_k,u_k) d\sigma.
\end{equation}
Integrating by parts, we have
\begin{equation}\label{con22}
   \int_{D_\varrho}V(r,y'')u_k\frac{\partial u_k}{\partial y_i} dy= \frac{1}{2}\int_{\partial D_\varrho} V(r,y'')u_k^2 \nu_i d\sigma-\frac{1}{2} \int_{D_\varrho}\frac{\partial V(r,y'')}{\partial y_i}u_k^2dy,
\end{equation}
and
\begin{equation}\label{con23}
   \int_{D_\varrho}(u_k)^{m^*-1}_+\frac{\partial u_k}{\partial y_i} dy= \frac{1}{m^*}\int_{\partial D_\varrho} (u_k)_+^{m^*}\nu_id\sigma,
\end{equation}
 where $\nu=(\nu_1,\nu_2,\cdots,\nu_N)$ denotes the outward unit normal vector of $\partial D_\varrho$.
Combining \eqref{con21}, \eqref{con22} and \eqref{con23}, we know that \eqref{con2} is equivalent to
\begin{equation}\label{trans2}
  \int_{D_\varrho}\frac{\partial V(r,y'')}{\partial y_i}u_k^2dy=O\bigg(\int_{\partial D_\varrho}\Big(\phi^2+|\phi|^{m^*}+\sum\limits_{j=1}^{2m-1}|\nabla^j \phi ||\nabla ^{2m-j}\phi|\Big)d\sigma\bigg),\quad i=4,5,\cdots,N.
\end{equation}

Multiplying \eqref{pp} by $u_k$ and integrating in $D_\varrho$, we obtain
\begin{align*}
  \int_{D_\varrho}u_k(-\Delta)^mu_kdy+\int_{D_\varrho}V(y)u_k^2 dy
  =\int_{D_\varrho}(u_k)_+^{m^*} dy+\sum\limits_{l=2}^Nc_l\sum\limits_{j=1}^k\int_{D_\varrho}\Big
  (Z_{x_j^+,\lambda}^{m^*-2}Z_{j,l}^++Z_{x_j^-,\lambda}^{m^*-2}Z_{j,l}^-\Big)Z_{\bar{r},\bar{h},\bar{y}'',\lambda}dy.
\end{align*}
Thus, \eqref{trans1} can be reduced to
\begin{align}\label{trans1'}
  &m\int_{D_\varrho}V(y)u_k^2 dy+\frac{1}{2} \int_{D_\varrho}\langle y, \nabla V(y)\rangle u_k^2dy\nonumber\\
  =&\frac{2m-N}{2}\sum\limits_{l=2}^Nc_l\sum\limits_{j=1}^k\int_{D_\varrho}\Big
  (Z_{x_j^+,\lambda}^{m^*-2}Z_{j,l}^++Z_{x_j^-,\lambda}^{m^*-2}Z_{j,l}^-\Big)Z_{\bar{r},\bar{h},\bar{y}'',\lambda}dy \nonumber\\
  &+O\bigg(\int_{\partial D_\varrho}\Big(\phi^2+|\phi|^{m^*}+\sum\limits_{j=1}^{2m-1}|\nabla^j \phi ||\nabla ^{2m-j}\phi|+\sum\limits_{j=0}^{2m-1}|\nabla^j \phi ||\nabla ^{2m-j-1}\phi|\Big)d\sigma\bigg).
\end{align}
Using \eqref{trans2}, we can rewrite \eqref{trans1'} as
\begin{align}\label{trans1''}
  &m\int_{D_\varrho}V(y)u_k^2 dy+\frac{1}{2} \int_{D_\varrho}r \frac{\partial V(r,y'')}{\partial r} u_k^2dy\nonumber\\
  =&\frac{2m-N}{2}\sum\limits_{l=2}^Nc_l\sum\limits_{j=1}^k\int_{D_\varrho}\Big
  (Z_{x_j^+,\lambda}^{m^*-2}Z_{j,l}^++Z_{x_j^-,\lambda}^{m^*-2}Z_{j,l}^-\Big)Z_{\bar{r},\bar{h},\bar{y}'',\lambda}dy \nonumber\\
  &+O\bigg(\int_{\partial D_\varrho}\Big(\phi^2+|\phi|^{m^*}+\sum\limits_{j=1}^{2m-1}|\nabla^j \phi ||\nabla ^{2m-j}\phi|+\sum\limits_{j=0}^{2m-1}|\nabla^j \phi ||\nabla ^{2m-j-1}\phi|\Big)d\sigma\bigg).
\end{align}
A direct computation gives
\begin{equation*}
  \sum\limits_{j=1}^k\int_{D_\varrho}\Big
  (Z_{x_j^+,\lambda}^{m^*-2}Z_{j,l}^++Z_{x_j^-,\lambda}^{m^*-2}Z_{j,l}^-\Big)Z_{\bar{r},\bar{h},\bar{y}'',\lambda}dy=O\Big(\frac{k\lambda^{\eta_l}}{\lambda^{2m}}\Big),
\end{equation*}
this with Proposition \ref{fixed} yields
\begin{equation*}
  \sum\limits_{l=2}^Nc_l\sum\limits_{j=1}^k\int_{D_\varrho}\Big
  (Z_{x_j^+,\lambda}^{m^*-2}Z_{j,l}^++Z_{x_j^-,\lambda}^{m^*-2}Z_{j,l}^-\Big)Z_{\bar{r},\bar{h},\bar{y}'',\lambda}dy=O\Big(\frac{k}{\lambda^{3m+\frac{1-\beta_1}{2}+\varepsilon}}\Big)=
  o\Big(\frac{k}{\lambda^{2m}}\Big).
\end{equation*}
Therefore, \eqref{trans1''} is equivalent to
\begin{align}\label{trans1'''}
  & \int_{D_\varrho}\frac{1}{2r^{2m-1}} \frac{\partial\big(r^{2m} V(r,y'')\big)}{\partial r} u_k^2dy\nonumber\\
  =& o\Big(\frac{k}{\lambda^{2m}}\Big)
  +O\bigg(\int_{\partial D_\varrho}\Big(\phi^2+|\phi|^{m^*}+\sum\limits_{j=1}^{2m-1}|\nabla^j \phi ||\nabla ^{2m-j}\phi|+\sum\limits_{j=0}^{2m-1}|\nabla^j \phi ||\nabla ^{2m-j-1}\phi|\Big)d\sigma\bigg).
\end{align}

First, we estimate \eqref{trans2} and \eqref{trans1'''} from the right hand, and it is sufficient to estimate
\begin{equation*}
  \int_{D_{4\delta}\backslash D_{3\delta}}\Big(\phi^2+|\phi|^{m^*}+\sum\limits_{j=1}^{2m-1}|\nabla^j \phi ||\nabla ^{2m-j}\phi|+\sum\limits_{j=0}^{2m-1}|\nabla^j \phi ||\nabla ^{2m-j-1}\phi|\Big)dy.
\end{equation*}
We first prove
\begin{lemma}\label{fi}
It holds
\begin{equation*}
  \int_{\mathbb{R}^N}\Big(\phi(-\Delta)^m \phi+V(r,y'')\phi^2\Big)dy=O\Big(\frac{k}{\lambda^{2m+1-\beta_1+\varepsilon}}\Big).
\end{equation*}
\end{lemma}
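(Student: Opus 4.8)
The plan is to test the equation \eqref{repp} satisfied by $\phi=\phi_{\bar r,\bar h,\bar y'',\lambda}$ against $\phi$ itself. Pairing \eqref{repp} with $\phi$ and using $\phi\in\mathbb{H}$, so that each integral $\int_{\mathbb{R}^N}Z_{x_j^\pm,\lambda}^{m^*-2}Z_{j,l}^\pm\phi\,dy$ vanishes, the term carrying the multipliers $c_l$ drops out, and a rearrangement gives
\begin{align*}
  \int_{\mathbb{R}^N}\big(\phi(-\Delta)^m\phi+V(r,y'')\phi^2\big)dy
  =(m^*-1)\int_{\mathbb{R}^N}Z_{\bar r,\bar h,\bar y'',\lambda}^{m^*-2}\phi^2\,dy
  +\int_{\mathbb{R}^N}N(\phi)\phi\,dy+\int_{\mathbb{R}^N}E_k\phi\,dy .
\end{align*}
It then suffices to bound each of the three integrals on the right by $O\big(k\lambda^{-(2m+1-\beta_1+\varepsilon)}\big)$.

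The main tool is the pairing inequality $\big|\int_{\mathbb{R}^N}fg\,dy\big|\le Ck\,\|f\|_{**}\|g\|_*$, valid whenever both norms are finite. It follows by inserting the pointwise bounds built into the definitions of $\|\cdot\|_*$ and $\|\cdot\|_{**}$ and estimating $\lambda^{N}\int_{\mathbb{R}^N}\big(\sum_j(1+\lambda|y-x_j^\pm|)^{-\frac{N+2m}{2}-\tau}\big)\big(\sum_i(1+\lambda|y-x_i^\pm|)^{-\frac{N-2m}{2}-\tau}\big)dy$: the $2k$ diagonal contributions each give $O(\lambda^{-N})$ since the total exponent $N+2\tau$ exceeds $N$, while the cross terms are of lower order by Lemmas \ref{AppA2} and \ref{AppA5}, using $\lambda|x_i^\pm-x_j^\pm|\to\infty$ under \eqref{case1} — this is the same type of computation already carried out on $I_2,I_3$ in the proof of Lemma \ref{xian} and on $E_k$ in the proof of Lemma \ref{err}. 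Taking $f=E_k$, $g=\phi$ and invoking Lemma \ref{err} and Proposition \ref{fixed} gives $\big|\int_{\mathbb{R}^N}E_k\phi\,dy\big|\le Ck\|E_k\|_{**}\|\phi\|_*\le Ck\big(\frac{1}{\lambda}\big)^{2m+1-\beta_1+2\varepsilon}$; taking $f=N(\phi)$, $g=\phi$ and using Lemma \ref{non} gives $\big|\int_{\mathbb{R}^N}N(\phi)\phi\,dy\big|\le Ck\|N(\phi)\|_{**}\|\phi\|_*\le Ck\|\phi\|_*^{1+\min\{2,m^*-1\}}$, which is of strictly higher order since $1+\min\{2,m^*-1\}>2$.

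For the first integral orthogonality is not needed: I would bound $Z_{\bar r,\bar h,\bar y'',\lambda}^{m^*-2}\le C\lambda^{2m}\sum_j\big((1+\lambda|y-x_j^+|)^{-4m}+(1+\lambda|y-x_j^-|)^{-4m}\big)$ (using the Hölder device from the proof of Lemma \ref{non} when $m^*-2>1$), insert the pointwise bound on $\phi^2$, and conclude $\int_{\mathbb{R}^N}Z_{\bar r,\bar h,\bar y'',\lambda}^{m^*-2}\phi^2\,dy\le Ck\|\phi\|_*^2\le Ck\big(\frac{1}{\lambda}\big)^{2m+1-\beta_1+2\varepsilon}$, by a computation parallel to the estimate of $I_2$ in the proof of Lemma \ref{ener1}. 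Combining the three bounds (and noting $2\varepsilon>\varepsilon$) yields the claim. I expect the only genuinely delicate step to be the pairing inequality: one must check that the interactions among the $2k$ bubbles — in particular the top–bottom interaction between $x_j^+$ and $x_i^-$ — are truly of lower order in the regime \eqref{case1}, where $\sqrt{1-\bar h^2}\to 0$; once Lemmas \ref{non}, \ref{err} and Proposition \ref{fixed} are available, the rest is routine bookkeeping.
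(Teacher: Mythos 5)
Your proposal is correct and follows essentially the same route as the paper: multiply the equation satisfied by $\phi$ by $\phi$ itself, use $\phi\in\mathbb{H}$ to kill the $c_l$-terms, and estimate the remaining integrals; your grouping $(m^*-1)\int Z_{\bar r,\bar h,\bar y'',\lambda}^{m^*-2}\phi^2+\int N(\phi)\phi+\int E_k\phi$ is just a regrouping of the paper's $I_1-I_2+I_3$, and your ``pairing inequality'' $|\int fg|\le Ck\|f\|_{**}\|g\|_*$ is exactly the inline computation the paper performs (diagonal terms plus Lemma \ref{AppA1}/\ref{AppA5} for the cross terms, including the $x_j^+$--$x_i^-$ interactions controlled by $(\lambda\bar r\bar h)^{-\tau}$). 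All three resulting bounds match the paper's, so no gap.
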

\begin{proof}
Multiplying \eqref{pp} by $\phi$ and integrating in $\mathbb{R}^N$, we have
\begin{align*}
  &\int_{\mathbb{R}^N}\Big(\phi(-\Delta)^m \phi+V(r,y'')\phi^2\Big)dy\\=&\int_{\mathbb{R}^N}\Big((Z_{\bar{r},\bar{h},\bar{y}'',\lambda}+\phi)_+^{m^*-1}-V(r,y'')Z_{\bar{r},\bar{h},\bar{y}'',\lambda}-
  (-\Delta)^mZ_{\bar{r},\bar{h},\bar{y}'',\lambda}\Big)\phi dy\\
  =&\int_{\mathbb{R}^N}\Big((Z_{\bar{r},\bar{h},\bar{y}'',\lambda}+\phi)_+^{m^*-1}-Z_{\bar{r},\bar{h},\bar{y}'',\lambda}^{m^*-1}\Big)\phi dy-\int_{\mathbb{R}^N}V(r,y'')Z_{\bar{r},\bar{h},\bar{y}'',\lambda}
  \phi dy\\&+\int_{\mathbb{R}^N}\Big(Z_{\bar{r},\bar{h},\bar{y}'',\lambda}^{m^*-1}-(-\Delta)^mZ_{\bar{r},\bar{h},\bar{y}'',\lambda}\Big)\phi dy\\
  :=&I_1-I_2+I_3.
\end{align*}
If $N\geq 6m$, by \eqref{new1}, we have
\begin{align*}
  |I_1|\leq &C\int_{\mathbb{R}^N}Z_{\bar{r},\bar{h},\bar{y}'',\lambda}^{m^*-2}\phi^2 dy+C\int_{\mathbb{R}^N}|\phi|^{m^*} dy\\
  \leq & C\lambda^N(\|\phi\|_*^2+\|\phi\|_*^{m^*})\int_{\mathbb{R}^N}
  \bigg(\sum\limits_{j=1}^k\Big(\frac{1}{(1+\lambda|y-x_j^+|)^{\frac{N-2m}{2}+\tau}}+\frac{1}{(1+\lambda|y-x_j^-|)^{\frac{N-2m}{2}+\tau}}\Big)\bigg)^{m^*}dy\\
  \leq & C\lambda^N(\|\phi\|_*^2+\|\phi\|_*^{m^*})\int_{\mathbb{R}^N}\sum\limits_{j=1}^k\Big(\frac{1}{(1+\lambda|y-x_j^+|)^{\frac{N+2m}{2}+\tau}}+
  \frac{1}{(1+\lambda|y-x_j^-|)^{\frac{N+2m}{2}+\tau}}\Big)\\
  & \times \sum\limits_{j=1}^k\Big(\frac{1}{(1+\lambda|y-x_j^+|)^{\frac{N-2m}{2}+\tau}}+
  \frac{1}{(1+\lambda|y-x_j^-|)^{\frac{N-2m}{2}+\tau}}\Big)dy\\
  \leq & Ck\|\phi\|_*^2=O\Big(\frac{k}{\lambda^{2m+1-\beta_1+\varepsilon}}\Big).
\end{align*}
Similarly, if $4m+1 < N \leq 6m - 1$, we can prove that $I_1=O\big(\frac{k}{\lambda^{2m+1-\beta_1+\varepsilon}}\big)$.

For $I_2$, by symmetry and \eqref{err3}, we can deduce
\begin{align*}
  |I_2|\leq& C\|\phi\|_* \big(\frac{1}{\lambda}\big)^{\frac{2m+1-\beta_1}{2}+\varepsilon}\lambda^{N} \int_{\mathbb{R}^N}\sum\limits_{j=1}^k\Big(\frac{1}{(1+\lambda|y-x_j^+|)^{\frac{N+2m}{2}+\tau}}+
  \frac{1}{(1+\lambda|y-x_j^-|)^{\frac{N+2m}{2}+\tau}}\Big)\\
  & \times \sum\limits_{j=1}^k\Big(\frac{1}{(1+\lambda|y-x_j^+|)^{\frac{N-2m}{2}+\tau}}+
  \frac{1}{(1+\lambda|y-x_j^-|)^{\frac{N-2m}{2}+\tau}}\Big)dy\\
  \leq &Ck\|\phi\|_* \big(\frac{1}{\lambda}\big)^{\frac{2m+1-\beta_1}{2}+\varepsilon}=O\Big(\frac{k}{\lambda^{2m+1-\beta_1+\varepsilon}}\Big).
\end{align*}

For $I_3$, by \eqref{err1}, \eqref{err2}, \eqref{err4} and \eqref{err5}, we  obtain
\begin{align*}
  |I_3|\leq &C \|\phi\|_* \big(\frac{1}{\lambda}\big)^{\frac{2m+1-\beta_1}{2}+\varepsilon}\lambda^{N}\int_{\mathbb{R}^N}\Big(\sum\limits_{j=1}^k\frac{1}{(1+\lambda|y-x_j^+|)^{\frac{N+2m}{2}+\tau}}+\sum\limits_{j=1}^k\frac{1}{(1+\lambda|y-x_j^-|)^{
  \frac{N+2m}{2}+\tau}}\Big)\\
& \times \sum\limits_{j=1}^k\Big(\frac{1}{(1+\lambda|y-x_j^+|)^{\frac{N-2m}{2}+\tau}}+
  \frac{1}{(1+\lambda|y-x_j^-|)^{\frac{N-2m}{2}+\tau}}\Big)dy\\
  \leq &Ck\|\phi\|_* \big(\frac{1}{\lambda}\big)^{\frac{2m+1-\beta_1}{2}+\varepsilon}=O\Big(\frac{k}{\lambda^{2m+1-\beta_1+\varepsilon}}\Big).
\end{align*}
This completes the proof.
\end{proof}

Next, we prove
\begin{lemma}\label{se}
It holds
\begin{equation*}
  \int_{D_{4\delta}\backslash D_{3\delta}}\big(\phi^2+|\phi|^{m^*}\big)dy=O\Big(\frac{k}{\lambda^{2m+1-\beta_1+\varepsilon}}\Big).
\end{equation*}
\end{lemma}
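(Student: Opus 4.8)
The plan is to reduce both integrals over the annular region $D_{4\delta}\backslash D_{3\delta}$ to global quantities that are already controlled by Lemma~\ref{fi}, using only that $D_{4\delta}\backslash D_{3\delta}$ is a bounded subset of $\mathbb{R}^N$ whose Lebesgue measure does not depend on $k$ or $\lambda$, together with the critical Sobolev inequality $D^{m,2}(\mathbb{R}^N)\hookrightarrow L^{m^*}(\mathbb{R}^N)$ (valid since $N>2m$).

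For the term $\int_{D_{4\delta}\backslash D_{3\delta}}\phi^2\,dy$ I would first apply H\"older's inequality with the conjugate exponents $\big(\tfrac{N}{N-2m},\tfrac{N}{2m}\big)$ to obtain
\[
 \int_{D_{4\delta}\backslash D_{3\delta}}\phi^2\,dy\le \big|D_{4\delta}\backslash D_{3\delta}\big|^{\frac{2m}{N}}\Big(\int_{\mathbb{R}^N}|\phi|^{m^*}\,dy\Big)^{\frac{2}{m^*}}\le C\Big(\int_{\mathbb{R}^N}|\phi|^{m^*}\,dy\Big)^{\frac{2}{m^*}},
\]
since $\big|D_{4\delta}\backslash D_{3\delta}\big|\le C$. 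By the critical Sobolev inequality and the identity $\int_{\mathbb{R}^N}\phi(-\Delta)^m\phi\,dy=\int_{\mathbb{R}^N}|\Delta^{m/2}\phi|^2\,dy$ (resp.\ $=\int_{\mathbb{R}^N}|\nabla\Delta^{(m-1)/2}\phi|^2\,dy$ when $m$ is odd), the right-hand side is bounded by $C\int_{\mathbb{R}^N}\phi(-\Delta)^m\phi\,dy$, and since $V\ge 0$ this is in turn at most $C\int_{\mathbb{R}^N}\big(\phi(-\Delta)^m\phi+V(r,y'')\phi^2\big)\,dy$. Lemma~\ref{fi} then gives $\int_{D_{4\delta}\backslash D_{3\delta}}\phi^2\,dy=O\big(k/\lambda^{2m+1-\beta_1+\varepsilon}\big)$.

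For the term $\int_{D_{4\delta}\backslash D_{3\delta}}|\phi|^{m^*}\,dy$ I would simply bound it by $\int_{\mathbb{R}^N}|\phi|^{m^*}\,dy$ and use the Sobolev inequality once more, together with Lemma~\ref{fi}, to get $\int_{\mathbb{R}^N}|\phi|^{m^*}\,dy\le C\big(\int_{\mathbb{R}^N}\phi(-\Delta)^m\phi\,dy\big)^{m^*/2}\le C\big(k/\lambda^{2m+1-\beta_1+\varepsilon}\big)^{m^*/2}$. Because $\beta_1=\frac{\alpha}{N-2m}<1$ gives $2m+1-\beta_1>2m\ge 2$, while $k\sim\lambda^{\frac{N-4m-\alpha}{N-2m}}$ with $\frac{N-4m-\alpha}{N-2m}$ small, one has $k/\lambda^{2m+1-\beta_1+\varepsilon}\to 0$; since $m^*/2>1$ this $m^*$-integral is of strictly lower order than $k/\lambda^{2m+1-\beta_1+\varepsilon}$ and is absorbed into it. Adding the two estimates yields the claim.

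There is no genuine obstacle here: the whole substance sits in Lemma~\ref{fi}, and the only points requiring attention are that the measure of $D_{4\delta}\backslash D_{3\delta}$ is a constant independent of the parameters, that the quadratic part of the functional $I$ is precisely the quantity controlled by the critical Sobolev inequality, and that dropping the nonnegative term $\int_{\mathbb{R}^N}V\phi^2$ is legitimate. (As an alternative one can avoid Lemma~\ref{fi} entirely and note that on $D_{4\delta}\backslash D_{3\delta}$ every $x_j^\pm$ satisfies $|y-x_j^\pm|\ge 2\delta$, because the $(r,y'')$-coordinates of the $x_j^\pm$ all equal $(\bar r,\bar y'')$, which lies within $\vartheta$ of $(r_0,y_0'')$; then the $\|\cdot\|_*$-bound of Proposition~\ref{fixed} degenerates to the pointwise estimate $|\phi(y)|\le Ck\|\phi\|_*\lambda^{-\tau}$ on this region, and integrating it over the bounded set gives $O\big(k/\lambda^{2m+1-\beta_1+\varepsilon}\big)$ once one checks $2\tau>\frac{N-4m-\alpha}{N-2m}$, which follows from $\tau=\frac{N-4m-\alpha}{N-2m-\alpha}$ and $0<\alpha$.)
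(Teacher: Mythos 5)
Your argument is correct, but it is not the route the paper takes. The paper proves Lemma \ref{se} directly from the weighted pointwise bound of Proposition \ref{fixed}: it writes $|\phi(y)|\le \|\phi\|_*\lambda^{\frac{N-2m}{2}}\sum_j\big((1+\lambda|y-x_j^+|)^{-\frac{N-2m}{2}-\tau}+(1+\lambda|y-x_j^-|)^{-\frac{N-2m}{2}-\tau}\big)$, expands the square (resp.\ the $m^*$-th power via \eqref{new1}) using Lemma \ref{AppA1} and Lemma \ref{AppA5} to absorb the cross terms, and integrates the single-bubble profile over $D_{4\delta}$, obtaining $Ck\|\phi\|_*^2/\lambda^{\tau}$ and $Ck\|\phi\|_*^{m^*}$; your parenthetical alternative is essentially a cruder version of this same computation and also closes, given your (correct) observations that $|y-x_j^\pm|\ge 2\delta$ on the annulus and that $k\lambda^{-\tau}\to 0$ in the Case 1 scaling. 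Your main route instead piggybacks on Lemma \ref{fi} via H\"older on the bounded annulus and the critical Sobolev inequality $\|\phi\|_{L^{m^*}}^2\le C\int\phi(-\Delta)^m\phi$, dropping the nonnegative $V$-term; this is legitimate (Lemma \ref{fi} precedes Lemma \ref{se} and is proved independently of it), and the small-ness argument for the $m^*$-term ($m^*/2>1$ and $k/\lambda^{2m+1-\beta_1+\varepsilon}\to 0$) is sound. What your route buys is economy -- no re-derivation of weighted integrals -- at the cost of being parasitic on Lemma \ref{fi}, whose proof performs exactly those integrals anyway, and of discarding the extra factor $\lambda^{-\tau}$ that the paper's estimate of $\int_{D_{4\delta}\setminus D_{3\delta}}\phi^2$ actually yields (harmless here, since the stated bound does not need it).
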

\begin{proof}
We have
\begin{align*}
  \int_{D_{4\delta}\backslash D_{3\delta}}\phi^2dy\leq &\frac{C\|\phi\|^2_*}{\lambda^{2m}}\lambda^N\int_{D_{4\delta}\backslash D_{3\delta}} \bigg(\sum\limits_{j=1}^k\Big(\frac{1}{(1+\lambda|y-x_j^+|)^{\frac{N-2m}{2}+\tau}}+\frac{1}{(1+\lambda|y-x_j^-|)^{\frac{N-2m}{2}+\tau}}\Big)\bigg)^{2}dy\\
  \leq &\frac{Ck\|\phi\|^2_*}{\lambda^{2m}}\lambda^N\int_{D_{4\delta}} \Big(\frac{1}{(1+\lambda|y-x_1^+|)^{{N-2m}+2\tau}}
  +\sum\limits_{j=2}^k\frac{1}{(\lambda|x_j^+-x_1^+|)^\tau}
  \frac{1}{(1+\lambda|y-x_j^+|)^{{N-2m}+\tau}}\\&+\sum\limits_{j=1}^k\frac{1}{(\lambda|x_j^--x_1^+|)^\tau}\frac{1}{(1+\lambda|y-x_j^-|)^{{N-2m}+\tau}}\Big)dy\\
  \leq &\frac{Ck\|\phi\|^2_*}{\lambda^{2m}}\lambda^N\int_{D_{4\delta}} \frac{1}{(1+\lambda|y-x_1^+|)^{{N-2m}+\tau}}dy\\
  \leq &\frac{Ck\|\phi\|^2_*}{\lambda^{\tau}}=O\Big(\frac{k}{\lambda^{2m+1+\tau-\beta_1+\varepsilon}}\Big)=O\Big(\frac{k}{\lambda^{2m+1-\beta_1+\varepsilon}}\Big).
\end{align*}
Since $m^*>2$, by \eqref{new1}, we obtain
\begin{align*}
  \int_{D_{4\delta}\backslash D_{3\delta}}|\phi|^{m^*}dy\leq &{C\|\phi\|^{m^*}_*}\lambda^N\int_{\mathbb{R}^N} \bigg(\sum\limits_{j=1}^k\Big(\frac{1}{(1+\lambda|y-x_j^+|)^{\frac{N-2m}{2}+\tau}}+\frac{1}{(1+\lambda|y-x_j^-|)^{\frac{N-2m}{2}+\tau}}\Big)\bigg)^{m^*}dy\\
  \leq &{C\|\phi\|^{m^*}_*}\lambda^N\int_{\mathbb{R}^N} \sum\limits_{j=1}^k\Big(\frac{1}{(1+\lambda|y-x_j^+|)^{\frac{N+2m}{2}+\tau}}+\frac{1}{(1+\lambda|y-x_j^-|)^{\frac{N+2m}{2}+\tau}}\Big)\\
  &\times \sum\limits_{j=1}^k\Big(\frac{1}{(1+\lambda|y-x_j^+|)^{\frac{N-2m}{2}+\tau}}+\frac{1}{(1+\lambda|y-x_j^-|)^{\frac{N-2m}{2}+\tau}}\Big)dy\\
  \leq &{Ck\|\phi\|^{m^*}_*}=O\Big(\frac{k}{\lambda^{2m+1-\beta_1+\varepsilon}}\Big).
\end{align*}
This finishes the proof.
\end{proof}

We also have the following point wise estimate for the error term.

\begin{lemma}\label{thi}
It holds
\begin{equation*}
  \|\phi\|_{C^{2m-1}(D_{4\delta}\backslash D_{3\delta})}\leq \frac{C}{\lambda^{m+\tau+\varepsilon}}.
\end{equation*}
\end{lemma}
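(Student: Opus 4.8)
The plan is to observe that, on a fixed annular neighbourhood of $(r_0,y_0'')$ which stays at distance $\ge 2\delta$ from all the concentration points, equation \eqref{pp} collapses to a genuinely elliptic equation with a tiny right-hand side, and then to convert the $L^\infty$-smallness of $\phi$ there into $C^{2m-1}$-smallness by interior elliptic estimates for $(-\Delta)^m$. Concretely, fix the open set $\Omega:=\{y\in\mathbb{R}^N:\ 2\delta<|(|y'|,y'')-(r_0,y_0'')|<5\delta\}$, so that $\overline{D_{4\delta}\backslash D_{3\delta}}\Subset\Omega$. Since $(|{x_j'}^{\pm}|,\bar y'')=(\bar r,\bar y'')\to(r_0,y_0'')$, for $k$ large all points $x_j^{\pm}$ lie in $D_{2\delta}$; hence $\xi\equiv 0$ on $\Omega$, so $Z_{\bar r,\bar h,\bar y'',\lambda}=Z_{x_j^{\pm},\lambda}=Z_{j,l}^{\pm}=0$ on $\Omega$, and each of the five terms in the definition of $E_k$ vanishes on $\Omega$ (those carrying derivatives of $\xi$ because $\xi$ is constant there, the others because $Z_{\bar r,\bar h,\bar y'',\lambda}$ vanishes there). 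Therefore \eqref{pp} reads, on $\Omega$,
\begin{equation*}
  (-\Delta)^m\phi=(\phi)_+^{m^*-1}-V(r,y'')\phi=:g .
\end{equation*}

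Next I record the pointwise size of $\phi$ on $\Omega$. For $y\in\Omega$ and $k$ large one has $|y-x_j^{\pm}|\ge 2\delta$, so $1+\lambda|y-x_j^{\pm}|\ge\delta\lambda$; inserting this into the definition of $\|\cdot\|_*$ gives $\|\phi\|_{L^\infty(\Omega)}\le C_\delta\,k\,\lambda^{-\tau}\|\phi\|_*$. Combining $\|\phi\|_*\le C\lambda^{-(2m+1-\beta_1)/2-\varepsilon}$ from Proposition \ref{fixed} with $k\le C\lambda^{(N-4m-\alpha)/(N-2m)}$, a short manipulation of the exponents (this is where the smallness of $\iota$, hence $N>4m+1$, is used) yields $k\,\lambda^{-\tau}\|\phi\|_*\le C\lambda^{-(m+\tau+\varepsilon)}$ for some small $\varepsilon>0$. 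In particular $\|\phi\|_{L^\infty(\Omega)}\to 0$, so on $\Omega$ we have $|g|\le|\phi|^{m^*-1}+\|V\|_\infty|\phi|\le C|\phi|$ and hence $\|g\|_{L^\infty(\Omega)}\le C\|\phi\|_{L^\infty(\Omega)}\le C\lambda^{-(m+\tau+\varepsilon)}$.

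Finally I invoke interior regularity for the polyharmonic operator. Since $\phi\in H^m(\mathbb{R}^N)$ solves $(-\Delta)^m\phi=g$ in $\Omega$ with $g\in L^\infty(\Omega)\subset L^p(\Omega)$ for every $p<\infty$, the standard interior $L^p$ (Calder\'on--Zygmund) estimates give $\phi\in W^{2m,p}_{\mathrm{loc}}(\Omega)$ together with
\begin{equation*}
  \|\phi\|_{W^{2m,p}(D_{4\delta}\backslash D_{3\delta})}\le C\big(\|g\|_{L^p(\Omega)}+\|\phi\|_{L^p(\Omega)}\big),
\end{equation*}
where $C=C(\delta,m,N,p)$ does not depend on $k$ or $\lambda$; this $k$-uniformity is the only delicate point, and it holds because, after the reduction above, the equation on the fixed domain $\Omega$ has the fixed leading operator $(-\Delta)^m$ and a right-hand side controlled by $\|\phi\|_{L^\infty(\Omega)}$. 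Choosing $p>N$, using the embedding $W^{2m,p}(D_{4\delta}\backslash D_{3\delta})\hookrightarrow C^{2m-1,1-N/p}(D_{4\delta}\backslash D_{3\delta})$ and $|\Omega|<\infty$ to replace $L^p$ by $L^\infty$ norms, we obtain
\begin{equation*}
  \|\phi\|_{C^{2m-1}(D_{4\delta}\backslash D_{3\delta})}\le C\,\|\phi\|_{L^\infty(\Omega)}\le\frac{C}{\lambda^{m+\tau+\varepsilon}},
\end{equation*}
which is the claim; everything besides the localisation step is bookkeeping with the exponents already appearing in Proposition \ref{fixed}.
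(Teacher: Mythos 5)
Your argument is correct and follows the same two-step structure as the paper's proof: first bound $\|\phi\|_{L^\infty}$ on a fixed annulus near $\partial D_\varrho$ using the weighted $\|\cdot\|_*$ estimate from Proposition~\ref{fixed} together with $k\lesssim\lambda^{(N-4m-\alpha)/(N-2m)}$, then upgrade to $C^{2m-1}$ via interior $L^p$ estimates for $(-\Delta)^m$ and Sobolev embedding. The one place you improve on the paper's exposition is the localisation step: you observe that on the fixed annulus $\xi\equiv 0$, so $Z_{\bar r,\bar h,\bar y'',\lambda}=0$, $E_k=0$, the $c_l$-terms vanish, and $N(\phi)$ collapses to $\phi_+^{m^*-1}$, whence the right-hand side of the equation is simply $\phi_+^{m^*-1}-V\phi$ and is controlled directly by $\|\phi\|_{L^\infty}$; the paper instead keeps the full right-hand side $(m^*-1)Z^{m^*-2}\phi-V\phi+N(\phi)+E_k$ and re-estimates it via Lemmas~\ref{non} and~\ref{err} and the $\|\cdot\|_{**}$ weights, which is correct but less transparent (and omits a harmless factor of $k$). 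Both routes yield the stated bound; yours is slightly cleaner bookkeeping.
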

\begin{proof}
By Proposition \ref{fixed}, for any $y\in D_{4\delta}\backslash D_{3\delta}$, we obtain
\begin{equation*}
  |\phi(y)|\leq \|\phi\|_*\lambda^{\frac{N-2m}{2}}\sum\limits_{j=1}^k\Big(\frac{1}{(1+\lambda|y-x_j^+|)^{\frac{N-2m}{2}+\tau}}+\frac{1}{(1+\lambda|y-x_j^-|)^{\frac{N-2m}{2}+\tau}}\Big)
  \leq \frac{C}{\lambda^{m+\tau+\varepsilon}}.
\end{equation*}

On the other hand, we know that $\phi$ satisfies
\begin{equation*}
  (-\Delta)^m \phi+V(r,y'')\phi-(m^*-1)Z_{\bar{r},\bar{h},\bar{y}'',\lambda}^{m^*-2}\phi
  =N(\phi)+E_k,\quad \text{in $\mathbb{R}^N$}.
\end{equation*}
By using the $L^p$ estimates, we deduce that for any $p>1$,
\begin{align*}
  \|\phi\|_{W^{2m,p}(D_{4\delta}\backslash D_{3\delta})}\leq& C\|\phi\|_{L^{\infty}(D_{4\delta}\backslash D_{3\delta})}+C\|
  (m^*-1)Z_{\bar{r},\bar{h},\bar{y}'',\lambda}^{m^*-2}\phi-V(r,y'')\phi
  +N(\phi)+E_k \|_{L^{\infty}(D_{4\delta}\backslash D_{3\delta})}\\
  \leq &C\|\phi\|_{L^{\infty}(D_{4\delta}\backslash D_{3\delta})}+\frac{C(\|N(\phi)\|_{**}+\|E_k\|_{**})}{\lambda^\tau}.
\end{align*}
The result follows from Lemmas \ref{non} and \ref{err}.
\end{proof}

From Lemmas \ref{fi}, \ref{se} and \ref{thi}, we know
\begin{equation*}
  \int_{D_{4\delta}\backslash D_{3\delta}}\Big(\phi^2+|\phi|^{m^*}+\sum\limits_{j=1}^{2m-1}|\nabla^j \phi ||\nabla ^{2m-j}\phi|+\sum\limits_{j=0}^{2m-1}|\nabla^j \phi ||\nabla ^{2m-j-1}\phi|\Big)dy=O\Big(\frac{k}{\lambda^{2m+1-\beta_1+\varepsilon}}\Big)=o\Big(\frac{k}{\lambda^{2m}}\Big).
\end{equation*}
Thus, there exists $\varrho\in (3\delta,4\delta)$ such that
\begin{equation}\label{trans3}
  \int_{\partial D_{\varrho}}\Big(\phi^2+|\phi|^{m^*}+\sum\limits_{j=1}^{2m-1}|\nabla^j \phi ||\nabla ^{2m-j}\phi|+\sum\limits_{j=0}^{2m-1}|\nabla^j \phi ||\nabla ^{2m-j-1}\phi|\Big)d\sigma=o\Big(\frac{k}{\lambda^{2m}}\Big).
\end{equation}

Conversely, we need to estimate \eqref{trans2} and \eqref{trans1'''} from the left hand, and we have the following lemma.
\begin{lemma}\label{converse}
For any function $h(r,y'')\in C^1(\mathbb{R}^{N-2},\mathbb{R})$, there holds
\begin{equation*}
  \int_{ D_{\varrho}}h(r,y'')u_k^2dy=2k\Big(\frac{1}{\lambda^{2m}}h(\bar{r},\bar{y}'')\int_{\mathbb{R}^N}U_{0,1}^2dy+o\big(\frac{1}{\lambda^{2m}}\big)\Big).
\end{equation*}
\end{lemma}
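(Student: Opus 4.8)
The plan is to write $u_k=Z_{\bar r,\bar h,\bar y'',\lambda}+\phi$ with $\phi$ the function from Proposition \ref{fixed}, expand the square
\[
u_k^2=Z_{\bar r,\bar h,\bar y'',\lambda}^2+2Z_{\bar r,\bar h,\bar y'',\lambda}\phi+\phi^2,
\]
and show that the $Z_{\bar r,\bar h,\bar y'',\lambda}^2$–term produces the stated main term while the cross term and $\int_{D_\varrho}h\phi^2$ are $o(k\lambda^{-2m})$.

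First I would handle the main term $\int_{D_\varrho}hZ_{\bar r,\bar h,\bar y'',\lambda}^2\,dy$. Since $|(\bar r,\bar y'')-(r_0,y_0'')|\le\vartheta$ with $\vartheta$ small, all the points $x_j^\pm$ lie well inside $D_\varrho$ and $\xi\equiv1$ on a fixed neighbourhood of each $x_j^\pm$; moreover $D_\varrho$, $\xi$ and $h$ are invariant under the rotations and the reflection $y_3\mapsto-y_3$ that define $H_s$. Hence
\[
\int_{D_\varrho}hZ_{\bar r,\bar h,\bar y'',\lambda}^2\,dy=2k\int_{D_\varrho}h\,\xi^2U_{x_1^+,\lambda}^2\,dy+(\text{interaction terms}),
\]
where each interaction term $\int_{D_\varrho}h\,\xi^2U_{x_i^\pm,\lambda}U_{x_j^\pm,\lambda}$ with $(i,\pm)\ne(j,\pm)$ is bounded, by Lemma \ref{AppA5} and the fact that \eqref{case1} forces $\lambda|x_i^\pm-x_j^\pm|\to\infty$, by a quantity whose sum over all pairs is $o(k\lambda^{-2m})$. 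In the leading integral, replacing $\xi^2$ by $1$ costs only $O(\lambda^{-(N-2m)})=o(\lambda^{-2m})$ (the difference is supported at distance $\gtrsim\delta-\vartheta$ from $x_1^+$, where $U_{x_1^+,\lambda}^2\lesssim\lambda^{-(N-2m)}$), and extending the integration from $D_\varrho$ to $\mathbb R^N$ costs another $O(\lambda^{-(N-2m)})$; replacing $h(y)$ by $h(\bar r,\bar y'')$ costs, by the $C^1$–regularity of $h$ on the compact set $D_\varrho$ and the inequalities $\big||y'|-\bar r\big|\le|y-x_1^+|$, $|y''-\bar y''|\le|y-x_1^+|$,
\[
\Big|\int_{\mathbb R^N}\big(h(y)-h(\bar r,\bar y'')\big)U_{x_1^+,\lambda}^2\,dy\Big|\le C\int_{\mathbb R^N}|y-x_1^+|\,U_{x_1^+,\lambda}^2\,dy=C\lambda^{-2m-1}\int_{\mathbb R^N}|w|\,U_{0,1}^2\,dw,
\]
and $\int_{\mathbb R^N}|w|U_{0,1}^2\,dw<\infty$ exactly because $N>4m+1$. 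Together with $\int_{\mathbb R^N}U_{x_1^+,\lambda}^2\,dy=\lambda^{-2m}\int_{\mathbb R^N}U_{0,1}^2\,dy$ this yields
\[
\int_{D_\varrho}hZ_{\bar r,\bar h,\bar y'',\lambda}^2\,dy=2k\Big(\frac{1}{\lambda^{2m}}h(\bar r,\bar y'')\int_{\mathbb R^N}U_{0,1}^2\,dy+o\big(\tfrac{1}{\lambda^{2m}}\big)\Big).
\]

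For the remaining two terms, I would use that $h$ is bounded on the compact set $D_\varrho$, so it suffices to prove $\int_{D_\varrho}\phi^2\,dy=o(k\lambda^{-2m})$ and, via Cauchy--Schwarz together with the bound $\int_{D_\varrho}Z_{\bar r,\bar h,\bar y'',\lambda}^2=O(k\lambda^{-2m})$ just established, $\int_{D_\varrho}|Z_{\bar r,\bar h,\bar y'',\lambda}||\phi|\le\big(\int_{D_\varrho}Z_{\bar r,\bar h,\bar y'',\lambda}^2\big)^{1/2}\big(\int_{D_\varrho}\phi^2\big)^{1/2}=o(k\lambda^{-2m})$. To estimate $\int_{D_\varrho}\phi^2$ I would insert $|\phi(y)|\le\|\phi\|_*\lambda^{\frac{N-2m}{2}}\sum_{j}\big((1+\lambda|y-x_j^+|)^{-\frac{N-2m}{2}-\tau}+(1+\lambda|y-x_j^-|)^{-\frac{N-2m}{2}-\tau}\big)$ and integrate over the bounded set $D_\varrho$: after squaring, the diagonal contributions give $\int_{D_\varrho}(1+\lambda|y-x_j^\pm|)^{-(N-2m)-2\tau}\,dy=O(\lambda^{-(N-2m)-2\tau})$ (the exponent $N-2m+2\tau$ is $<N$ since $\tau<m$), while the off-diagonal ones are smaller by the interaction estimates of the Appendix, so $\int_{D_\varrho}\phi^2\le Ck\|\phi\|_*^2\lambda^{-2\tau}$. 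By Proposition \ref{fixed}, $\|\phi\|_*\le C\lambda^{-\frac{2m+1-\beta_1}{2}-\varepsilon}$ with $\beta_1=\alpha/(N-2m)<1$, whence $k\|\phi\|_*^2\lambda^{-2\tau}\le Ck\lambda^{-(2m+1-\beta_1)-2\varepsilon-2\tau}=o(k\lambda^{-2m})$, and correspondingly the cross term is $\le Ck\|\phi\|_*\lambda^{-m-\tau}=o(k\lambda^{-2m})$. Adding the three pieces gives the assertion.

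The main obstacle I anticipate is the bookkeeping in the last paragraph: showing that $\int_{D_\varrho}\phi^2$ and $\int_{D_\varrho}Z_{\bar r,\bar h,\bar y'',\lambda}\phi$ are genuinely $o(k\lambda^{-2m})$ rather than merely $O(k\lambda^{-2m})$. This hinges on the precise decay rate $\|\phi\|_*\le C\lambda^{-\frac{2m+1-\beta_1}{2}-\varepsilon}$ from Proposition \ref{fixed} and on $\beta_1<1$; it also requires keeping track of both the inner region $|y-x_j^\pm|\lesssim\lambda^{-1}$ and the outer region $|y-x_j^\pm|\sim\varrho$ of $D_\varrho$ when bounding $\int_{D_\varrho}\phi^2$, and summing the $O(k^2)$ bubble--bubble interactions through the appendix estimates and the admissible range of $\lambda$ in \eqref{case1}. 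A secondary but essential point, already needed for the main term, is the borderline finiteness of the first moment $\int_{\mathbb R^N}|w|U_{0,1}^2\,dw$, which is precisely what the hypothesis $N>4m+1$ guarantees.
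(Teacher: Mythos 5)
Your proposal is correct and follows essentially the same decomposition as the paper's proof: write $u_k=Z_{\bar r,\bar h,\bar y'',\lambda}+\phi$, extract the main term from $\int_{D_\varrho}hZ_{\bar r,\bar h,\bar y'',\lambda}^2$, and show the cross term and $\int_{D_\varrho}h\phi^2$ are $o(k\lambda^{-2m})$ using the pointwise bound $|\phi|\le\|\phi\|_*\lambda^{\frac{N-2m}{2}}\sum_j(\cdots)$ together with $\|\phi\|_*\le C\lambda^{-\frac{2m+1-\beta_1}{2}-\varepsilon}$ and $\beta_1<1$. The only mild differences are that you estimate the cross term by Cauchy--Schwarz rather than the paper's direct pointwise pairing of $hZ_{\bar r,\bar h,\bar y'',\lambda}$ (via the bound from \eqref{err3}) against $\phi$, and you supply the Taylor expansion and interaction estimates for the main term that the paper collapses into ``a direct computation''; both routes give the same conclusion.
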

\begin{proof}
Since $u_k=Z_{\bar{r},\bar{h},\bar{y}'',\lambda}+\phi$, we have
\begin{equation*}
  \int_{ D_{\varrho}}h(r,y'')u_k^2dy=\int_{ D_{\varrho}}h(r,y'')Z_{\bar{r},\bar{h},\bar{y}'',\lambda}^2dy+2\int_{ D_{\varrho}}h(r,y'')Z_{\bar{r},\bar{h},\bar{y}'',\lambda} \phi dy+\int_{ D_{\varrho}}h(r,y'')\phi^2dy.
\end{equation*}
For the first term, a direct computation leads to
\begin{equation*}
  \int_{ D_{\varrho}}h(r,y'')Z_{\bar{r},\bar{h},\bar{y}'',\lambda}^2dy=2k\Big(\frac{1}{\lambda^{2m}}h(\bar{r},\bar{y}'')\int_{\mathbb{R}^N}U_{0,1}^2dy+o\big(\frac{1}{\lambda^{2m}}\big)\Big).
\end{equation*}

For the second term, by symmetry and \eqref{err3}, we obtain
\begin{align*}
  &\Big|\int_{ D_{\varrho}}h(r,y'')Z_{\bar{r},\bar{h},\bar{y}'',\lambda} \phi dy\Big|\\
  \leq &C\|\phi\|_*\big(\frac{1}{\lambda}\big)^{\frac{2m+1-\beta_1}{2}+\varepsilon} \lambda^N\int_{ \mathbb{R}^N}\sum\limits_{j=1}^k\Big(\frac{1}{(1+\lambda|y-x_j^+|)^{\frac{N+2m}{2}+\tau}}+\frac{1}{(1+\lambda|y-x_j^-|)^{\frac{N+2m}{2}+\tau}}\Big)\\
  &\times \sum\limits_{j=1}^k\Big(\frac{1}{(1+\lambda|y-x_j^+|)^{\frac{N-2m}{2}+\tau}}+\frac{1}{(1+\lambda|y-x_j^-|)^{\frac{N-2m}{2}+\tau}}\Big)dy\\
  \leq &Ck\|\phi\|_*\big(\frac{1}{\lambda}\big)^{\frac{2m+1-\beta_1}{2}+\varepsilon}=O\Big(\frac{k}{\lambda^{2m+{1-\beta_1}+\varepsilon}}\Big)=o\Big(\frac{k}{\lambda^{2m}}\Big).
\end{align*}

For the third term, we have
\begin{align*}
  \Big|\int_{ D_{\varrho}}h(r,y'')\phi^2dy\Big|\leq& \frac{C\|\phi\|^2_*}{\lambda^{2m}}\lambda^N\int_{D_{4\delta}\backslash D_{3\delta}} \bigg(\sum\limits_{j=1}^k\Big(\frac{1}{(1+\lambda|y-x_j^+|)^{\frac{N-2m}{2}+\tau}}+\frac{1}{(1+\lambda|y-x_j^-|)^{\frac{N-2m}{2}+\tau}}\Big)\bigg)^{2}dy\\
  \leq &\frac{Ck\|\phi\|^2_*}{\lambda^{\tau}}=O\Big(\frac{k}{\lambda^{2m+1+\tau-\beta_1+\varepsilon}}\Big)=o\Big(\frac{k}{\lambda^{2m}}\Big).
\end{align*}
So we get the result.
\end{proof}

With these preliminaries at hand, now we will prove Theorem \ref{th1}.

\vspace{.3cm}

\noindent{\bf Proof of Theorem \ref{th1}.} Through the above discussion, applying \eqref{trans3} and Lemma \ref{converse} to \eqref{trans2} and \eqref{trans1'''}, we can see that \eqref{con1} and \eqref{con2} are equivalent to
\begin{equation*}
  2k\Big(\frac{1}{\lambda^{2m}}\frac{1}{2\bar{r}^{2m-1}} \frac{\partial\big(\bar{r}^{2m} V(\bar{r},\bar{y}'')\big)}{\partial \bar{r}} \int_{\mathbb{R}^N}U_{0,1}^2dy+o\big(\frac{1}{\lambda^{2m}}\big)\Big)=o\Big(\frac{k}{\lambda^{2m}}\Big),
\end{equation*}
and
\begin{equation*}
  2k\Big(\frac{1}{\lambda^{2m}}\frac{\partial V(\bar{r},\bar{y}'')}{\partial \bar{y}_i}\int_{\mathbb{R}^N}U_{0,1}^2dy+o\big(\frac{1}{\lambda^{2m}}\big)\Big)=o\Big(\frac{k}{\lambda^{2m}}\Big),\quad i=4,5,\cdots,N.
\end{equation*}
Therefore, the equations to determine $(\bar{r},\bar{y}'')$ are
\begin{equation}\label{de1}
  \frac{\partial\big(\bar{r}^{2m} V(\bar{r},\bar{y}'')\big)}{\partial \bar{r}} =o(1),
\end{equation}
and
\begin{equation}\label{de2}
  \frac{\partial\big(\bar{r}^{2m} V(\bar{r},\bar{y}'')\big)}{\partial \bar{y}_i}=o(1),\quad i=4,5,\cdots,N.
\end{equation}
Moreover, by Lemma \ref{ener1}, the equation to determine $\lambda$ is
\begin{equation}\label{dela}
  -\frac{B_1}{\lambda^{2m+1}}V(\bar{r},\bar{y}'')+\frac{B_3k^{N-2m}}{\lambda^{N-2m+1}(\sqrt{1-\bar{h}^2})^{N-2m}}+
  \frac{B_4 k}{\lambda^{N-2m+1}\bar{h}^{N-2m-1}\sqrt{1-\bar{h}^2}}=O\Big(\frac{1}{\lambda^{2m+1+\varepsilon}}\Big),
\end{equation}
where $B_1$, $B_3$, $B_4$ are positive constants.

Let $\lambda=tk^{\frac{N-2m}{N-4m-\alpha}}$ with $\alpha=N-4M-\iota$, $\iota$ is a small constant, then $t\in [L_0,L_1]$.
From
\eqref{dela}, we have
\begin{equation*}
  -\frac{B_1}{t^{2m+1}}V(\bar{r},\bar{y}'')+\frac{B_3M_1^{N-2m}}{t^{N-2m+1-\alpha}}=o(1),\quad t\in [L_0,L_1].
\end{equation*}

Define
\begin{equation*}
  F(t,\bar{r},\bar{y}'')=\Big(\nabla _{\bar{r},\bar{y}''}\big(\bar{r}^{2m}V(\bar{r},\bar{y}'')\big),-\frac{B_1}{t^{2m+1}}V(\bar{r},\bar{y}'')+\frac{B_3M_1^{N-2m}}{t^{N-2m+1-\alpha}}\Big).
\end{equation*}
Then, it holds
\begin{equation*}
  deg\Big(F(t,\bar{r},\bar{y}''),[L_0,L_1]\times B_\vartheta\big((r_0,y_0'')\big)\Big)=-deg\Big(\nabla _{\bar{r},\bar{y}''}\big(\bar{r}^{2m}V(\bar{r},\bar{y}'')\big),B_\vartheta\big((r_0,y_0'')\big)\Big)\neq0.
\end{equation*}
Hence, \eqref{de1}, \eqref{de2} and \eqref{dela} has a solution $t_k\in [L_0,L_1]$, $(\bar{r}_k,\bar{y}_k'')\in B_\vartheta\big((r_0,y_0'')\big)$.
\qed

\section{Proof of Theorem \ref{th2}}\label{four}
In this section, we give a brief proof of Theorem \ref{th2}. We define $\tau=\frac{N-4m}{N-2m}$.

\vspace{.3cm}

\noindent{\bf Proof of Theorem \ref{th2}.} We can verify that
\begin{equation}\label{same}
  \frac{k}{\lambda^\tau}=O(1),\quad \frac{k}{\lambda}=O\Big(\big(\frac{1}{\lambda}\big)^{\frac{2m}{N-2m}}\Big).
\end{equation}
Using \eqref{same} and Lemma \ref{AppA5}, we get the same conclusions for problems arising from the distance between points $\{x_j^+\}_{j=1}^k$ and $\{x_j^-\}_{j=1}^k$.

Moreover, by Lemma \ref{AppA4}, we have
\begin{equation}\label{same1}
 |Z_{j,2}^{\pm}|\leq C\lambda^{-\beta_2}Z_{x_j^\pm,\lambda},\quad |Z_{j,l}^{\pm}|\leq C\lambda Z_{x_j^\pm,\lambda},\quad l=3,4,\cdots,N,
\end{equation}
where $\beta_2=\frac{N-4m}{N-2m}$.

Using \eqref{same} and \eqref{same1}, with a similar step in the proof of Theorem \ref{th1} in Sections \ref{two} and \ref{three}, we know
that the
proof of Theorem \ref{th2} has the same reduction structure as that of Theorem \ref{th1} and $u_k$ is a solution of problem
\eqref{pro} if the following equalities hold:
\begin{equation}\label{de1'}
  \frac{\partial\big(\bar{r}^{2m} V(\bar{r},\bar{y}'')\big)}{\partial \bar{r}} =o(1),
\end{equation}
\begin{equation}\label{de2'}
  \frac{\partial\big(\bar{r}^{2m} V(\bar{r},\bar{y}'')\big)}{\partial \bar{y}_i}=o(1),\quad i=4,5,\cdots,N,
\end{equation}
\begin{equation}\label{dela'}
  -\frac{B_1}{\lambda^{2m+1}}V(\bar{r},\bar{y}'')+\frac{B_3k^{N-2m}}{\lambda^{N-2m+1}(\sqrt{1-\bar{h}^2})^{N-2m}}+
  \frac{B_4 k}{\lambda^{N-2m+1}\bar{h}^{N-2m-1}\sqrt{1-\bar{h}^2}}=O\Big(\frac{1}{\lambda^{2m+1+\varepsilon}}\Big).
\end{equation}

Let $\lambda=tk^{\frac{N-2m}{N-4m}}$, then $t\in [L_0',L_1']$.
Next, we discuss the main items in \eqref{dela'}.

{\bf Case 1.} If $\bar{h}\rightarrow A\in (0,1)$, then $(\lambda^{\frac{N-4m}{N-2m}}\bar{h})^{-1}\rightarrow0$ as $\lambda\rightarrow\infty$,
from
\eqref{dela'}, we have
\begin{equation*}
  -\frac{B_1}{t^{2m+1}}V(\bar{r},\bar{y}'')+\frac{B_3}{t^{N-2m+1}(\sqrt{1-A^2})^{N-2m}}=o(1),\quad t\in [L_0',L_1'].
\end{equation*}
Define
\begin{equation*}
  F(t,\bar{r},\bar{y}'')=\Big(\nabla _{\bar{r},\bar{y}''}\big(\bar{r}^{2m}V(\bar{r},\bar{y}'')\big),-\frac{B_1}{t^{2m+1}}V(\bar{r},\bar{y}'')+\frac{B_3}{t^{N-2m+1}(\sqrt{1-A^2})^{N-2m}}\Big).
\end{equation*}
Then, it holds
\begin{equation*}
  deg\Big(F(t,\bar{r},\bar{y}''),[L_0',L_1']\times B_\vartheta\big((r_0,y_0'')\big)\Big)=-deg\Big(\nabla _{\bar{r},\bar{y}''}\big(\bar{r}^{2m}V(\bar{r},\bar{y}'')\big),B_\vartheta\big((r_0,y_0'')\big)\Big)\neq0.
\end{equation*}
Hence, \eqref{de1'}, \eqref{de2'} and \eqref{dela'} has a solution $t_k\in [L_0',L_1']$, $(\bar{r}_k,\bar{y}_k'')\in B_\vartheta\big((r_0,y_0'')\big)$.

{\bf Case 2.} If $\bar{h}\rightarrow 0$ and $(\lambda^{\frac{N-4m}{N-2m}}\bar{h})^{-1}\rightarrow0$ as $\lambda\rightarrow\infty$,
from
\eqref{dela'}, we have
\begin{equation*}
  -\frac{B_1}{t^{2m+1}}V(\bar{r},\bar{y}'')+\frac{B_3}{t^{N-2m+1}}=o(1),\quad t\in [L_0',L_1'].
\end{equation*}
Define
\begin{equation*}
  F(t,\bar{r},\bar{y}'')=\Big(\nabla _{\bar{r},\bar{y}''}\big(\bar{r}^{2m}V(\bar{r},\bar{y}'')\big),-\frac{B_1}{t^{2m+1}}V(\bar{r},\bar{y}'')+\frac{B_3}{t^{N-2m+1}}\Big).
\end{equation*}
Then, we can find a solution $(t_k,\bar{r}_k,\bar{y}_k'')$ of \eqref{de1'}, \eqref{de2'} and \eqref{dela'} as before.

{\bf Case 3.} If $\bar{h}\rightarrow 0$ and $(\lambda^{\frac{N-4m}{N-2m}}\bar{h})^{-1}\rightarrow A\in (C_1,M_2)$ for some positive constant $C_1$ as $\lambda\rightarrow\infty$,
from
\eqref{dela'}, we have
\begin{equation*}
  -\frac{B_1}{t^{2m+1}}V(\bar{r},\bar{y}'')+\frac{B_3}{t^{N-2m+1}}+\frac{B_4A^{N-2m-1}}{t^{2m+1+\frac{N-4m}{N-2m}}}=o(1),\quad t\in [L_0',L_1'].
\end{equation*}
Since $N-2m+1$ and $2m+1+\frac{N-4m}{N-2m}$ are strictly greater than $2m+1$, there exists a solution of \eqref{de1'}, \eqref{de2'} and \eqref{dela'} as before.
\qed

\vspace{.5cm}

\begin{appendices}

\section{{\bf Some basic estimates}}\label{AppA}
In this section, we give some basic estimates.
\begin{lemma}\cite[Lemma B.1]{WY1}\label{AppA1}
For $y,x_i,x_j\in \mathbb{R}^N$, $i\neq j$, let
\begin{equation*}
  g_{ij}(y)=\frac{1}{(1+|y-x_i|)^{\kappa_1}}\frac{1}{(1+|y-x_j|)^{\kappa_2}},
\end{equation*}
where $\kappa_1,\kappa_2\geq 1$ are constants. Then for any constant $0<\sigma\leq \min\{\kappa_1,\kappa_2\}$, there exists a constant $C>0$ such that
\begin{equation*}
  g_{ij}(y)\leq \frac{C}{|x_i-x_j|^\sigma}\Big(\frac{1}{(1+|y-x_i|)^{\kappa_1+\kappa_2-\sigma}}+\frac{1}{(1+|y-x_j|)^{\kappa_1+\kappa_2-\sigma}}\Big).
\end{equation*}
\end{lemma}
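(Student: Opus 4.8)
The plan is to derive the estimate purely from the triangle inequality $|x_i-x_j|\le|y-x_i|+|y-x_j|$ combined with a dichotomy on which of the two distances $|y-x_i|$, $|y-x_j|$ is the larger. Since the statement is symmetric under exchanging $i\leftrightarrow j$ together with $\kappa_1\leftrightarrow\kappa_2$, it is enough to treat one side of the dichotomy in full and invoke symmetry for the other; moreover it suffices to bound $g_{ij}(y)$ by just one of the two terms on the right-hand side, the one attached to the point closer to $y$, since the omitted term is nonnegative.

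First I would assume $|y-x_j|\le|y-x_i|$ and show
\[
g_{ij}(y)\le\frac{C}{|x_i-x_j|^{\sigma}\,(1+|y-x_j|)^{\kappa_1+\kappa_2-\sigma}}.
\]
Clearing denominators, this is equivalent to
\[
\Big(\frac{|x_i-x_j|}{1+|y-x_i|}\Big)^{\sigma}\Big(\frac{1+|y-x_j|}{1+|y-x_i|}\Big)^{\kappa_1-\sigma}\le C .
\]
The second factor is $\le 1$ because $|y-x_j|\le|y-x_i|$ and $\kappa_1-\sigma\ge 0$, which is exactly where the hypothesis $\sigma\le\min\{\kappa_1,\kappa_2\}$ enters. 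For the first factor, the triangle inequality gives $|x_i-x_j|\le|y-x_i|+|y-x_j|\le 2|y-x_i|\le 2(1+|y-x_i|)$, so it is at most $2^{\sigma}$; hence $C=2^{\sigma}$ works. Putting the nonnegative term $\big(1+|y-x_i|\big)^{-(\kappa_1+\kappa_2-\sigma)}$ back in on the right then yields the claimed inequality in this case.

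In the complementary case $|y-x_i|\le|y-x_j|$ I would run the identical computation with $i$ and $j$ (and $\kappa_1$ and $\kappa_2$) swapped, using $\sigma\le\kappa_2$ this time, which produces the bound through the term $\big(1+|y-x_i|\big)^{-(\kappa_1+\kappa_2-\sigma)}$. Taking $C=2^{\sigma}$ (or any larger absolute constant) in both cases finishes the proof. I do not anticipate a genuine obstacle: the only thing to watch is the exponent bookkeeping, namely that the leftover powers of $1+|y-x_i|$ and $1+|y-x_j|$ stay nonnegative — guaranteed by $0<\sigma\le\min\{\kappa_1,\kappa_2\}$ — and that the passage from $|x_i-x_j|$ to $1+|y-x_i|$ is valid no matter how small $|x_i-x_j|$ is, which it is. Since this is a standard estimate recorded as \cite[Lemma B.1]{WY1}, one could also simply cite it, but the short argument above is self-contained.
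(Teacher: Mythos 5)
Your proof is correct. One small point worth flagging about the role of the statement in the paper: the paper does not prove Lemma~\ref{AppA1} at all; it simply cites \cite[Lemma B.1]{WY1}, so there is no in-paper argument to compare against. Your self-contained derivation is the standard one (and, to my knowledge, essentially the argument given in the cited source): split into the two cases determined by which of $|y-x_i|$, $|y-x_j|$ is larger, clear denominators, observe that the constraint $0<\sigma\le\min\{\kappa_1,\kappa_2\}$ keeps the leftover exponent $\kappa_1-\sigma$ (resp.\ $\kappa_2-\sigma$) nonnegative so that the ratio raised to that power is $\le 1$, and control $|x_i-x_j|/(1+|y-x_i|)$ via the triangle inequality. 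The algebra checks out, the degenerate situations ($x_i$ close to or equal to $x_j$, or $y$ coinciding with one of the centers) are harmless since the right-hand side is infinite or the inequality $|x_i-x_j|\le 2(1+|y-x_i|)$ still holds, and $C=2^{\min\{\kappa_1,\kappa_2\}}$ is a valid uniform constant. Nothing to fix.
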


\begin{lemma}\cite[Lemma 2.2]{GL}\label{AppA2}
For any constant $0<\sigma<N-2m$, there exists a constant $C>0$ such that
\begin{equation*}
  \int_{\mathbb{R}^N}\frac{1}{|y-z|^{N-2m}}\frac{1}{(1+|z|)^{2m+\sigma}}dz\leq \frac{C}{(1+|y|)^\sigma}.
\end{equation*}
\end{lemma}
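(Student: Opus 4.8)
The plan is to reduce the inequality to the classical homogeneous Riesz-type integral. The starting point is the trivial pointwise bound $(1+|z|)^{-(2m+\sigma)}\le |z|^{-(2m+\sigma)}$, which gives
\[
\int_{\mathbb{R}^N}\frac{dz}{|y-z|^{N-2m}(1+|z|)^{2m+\sigma}}\ \le\ J(y):=\int_{\mathbb{R}^N}\frac{dz}{|y-z|^{N-2m}\,|z|^{2m+\sigma}},
\]
so it suffices to control $J(y)$ and then fix up the behaviour near $y=0$.

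First I would check that $J(y)$ is finite for $y\neq 0$: near $z=0$ the integrand is comparable to $|z|^{-(2m+\sigma)}$ with $2m+\sigma<N$ (this is exactly the hypothesis $\sigma<N-2m$); near $z=y$ it is comparable to $|y-z|^{-(N-2m)}$ with $N-2m<N$; and near infinity it is comparable to $|z|^{-(N+\sigma)}$ with $N+\sigma>N$ (this uses $\sigma>0$). Hence $J(e_1)<\infty$. A change of variables $z=|y|w$ together with the rotational invariance of the integrand then shows that $J$ is rotationally invariant and homogeneous of degree $-\sigma$, so $J(y)=J(e_1)\,|y|^{-\sigma}$. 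In particular, for $|y|\ge 1$ one gets $\int\le J(e_1)|y|^{-\sigma}\le 2^{\sigma}J(e_1)(1+|y|)^{-\sigma}$, which is the claimed bound in that range.

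It remains to treat $|y|\le 1$, where the bound $C|y|^{-\sigma}$ degenerates while $(1+|y|)^{-\sigma}$ stays $\ge 2^{-\sigma}$; thus it is enough to bound the left-hand side by an absolute constant there. I would split the domain into $\{|z|\le 2\}$ and $\{|z|>2\}$: on the first region $(1+|z|)^{-(2m+\sigma)}\le 1$ and, since $|z-y|$ ranges in $B_3(0)$, one has $\int_{|z|\le 2}|y-z|^{-(N-2m)}\,dz\le \int_{|w|\le 3}|w|^{-(N-2m)}\,dw<\infty$; on the second region $|y-z|\ge |z|-|y|\ge |z|/2$, so the integrand is $\le 2^{N-2m}|z|^{-(N+\sigma)}$ and $\int_{|z|>2}|z|^{-(N+\sigma)}\,dz<\infty$. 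Adding these gives a uniform bound on $\{|y|\le 1\}$, and taking $C$ to be $2^{\sigma}$ times the larger of the two constants concludes the argument.

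There is no genuine obstacle here; the only points requiring care are the small-$|y|$ regime, where one cannot use the pure scaling bound and must estimate directly, and the bookkeeping of where each hypothesis is used: $\sigma>0$ ensures integrability of the majorizing integral $J$ at infinity, while $\sigma<N-2m$ (equivalently $2m+\sigma<N$) ensures its integrability at the origin, so the assumption $0<\sigma<N-2m$ is precisely what the proof needs.
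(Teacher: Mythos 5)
Your proof is correct, and it is worth noting that the paper itself does not prove this lemma at all --- it is simply quoted from [GL, Lemma 2.2], where (as in the related estimates of Wei--Yan that it descends from) the standard argument proceeds by decomposing $\mathbb{R}^N$ into regions according to the relative sizes of $|y-z|$, $|z|$ and $|y|$ (e.g.\ $|y-z|\le |y|/2$, $|z|\le |y|/2$, and the remainder) and estimating the kernel or the weight by its value at the center on each piece. Your route is genuinely different and, to my mind, cleaner for large $|y|$: by majorizing $(1+|z|)^{-(2m+\sigma)}$ by $|z|^{-(2m+\sigma)}$ you land on a convolution of two homogeneous kernels, whose exact homogeneity $J(y)=J(e_1)|y|^{-\sigma}$ follows from one change of variables once finiteness of $J(e_1)$ is checked; the three integrability checks (at $0$, at $y$, at $\infty$) transparently account for both hypotheses $\sigma>0$ and $\sigma<N-2m$. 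You also correctly recognized that the scaling bound degenerates as $y\to 0$ and reverted to the original (non-homogeneous) integrand there, where the split $\{|z|\le 2\}\cup\{|z|>2\}$ and the inequality $|y-z|\ge |z|/2$ on the outer region give a uniform bound; combined with $(1+|y|)^{-\sigma}\ge 2^{-\sigma}$ this closes the case $|y|\le 1$. The trade-off is that the region-splitting proof generalizes more readily to the sharper variants with logarithmic corrections at the endpoint exponents, whereas your scaling argument is shorter but tied to the strict inequalities $0<\sigma<N-2m$; for the statement as given, both are complete, and yours has no gaps.
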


\begin{lemma}\label{AppA3}
Assume that $N> 4m+1$, $0<\tau<2$, then there exists a small constant $\sigma>0$ such that
\begin{equation*}
  \int_{\mathbb{R}^N}\frac{1}{|y-z|^{N-2m}}Z_{\bar{r},\bar{h},\bar{y}'',\lambda}^{m^*-2}(z)\sum\limits_{j=1}^k\frac{1}{(1+\lambda|z-x_j^+|)^{\frac{N-2m}{2}+\tau}}dz\leq C\sum\limits_{j=1}^k\frac{1}{(1+\lambda|y-x_j^+|)^{\frac{N-2m}{2}+\tau+\sigma}},
\end{equation*}
and
\begin{equation*}
  \int_{\mathbb{R}^N}\frac{1}{|y-z|^{N-2m}}Z_{\bar{r},\bar{h},\bar{y}'',\lambda}^{m^*-2}(z)\sum\limits_{j=1}^k\frac{1}{(1+\lambda|z-x_j^-|)^{\frac{N-2m}{2}+\tau}}dz\leq C\sum\limits_{j=1}^k\frac{1}{(1+\lambda|y-x_j^-|)^{\frac{N-2m}{2}+\tau+\sigma}}.
\end{equation*}
\end{lemma}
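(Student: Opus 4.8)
The plan is to rescale to $\lambda=1$, dominate $Z_{\bar r,\bar h,\bar y'',\lambda}^{m^*-2}$ pointwise by a sum of single-bubble weights, and then estimate each resulting term by combining Lemmas \ref{AppA1} and \ref{AppA2}. First I would substitute $z\mapsto\lambda^{-1}z$ and $y\mapsto\lambda^{-1}y$, which turns $x_j^{\pm}$ into the far-separated points $\lambda x_j^{\pm}$. Since $|y-z|^{-(N-2m)}$, $dz$ and $Z_{\bar r,\bar h,\bar y'',\lambda}^{m^*-2}$ scale respectively as $\lambda^{N-2m}$, $\lambda^{-N}$ and $\lambda^{(N-2m)(m^*-2)/2}=\lambda^{2m}$, these three powers cancel; using $U_{x,\lambda}(\lambda^{-1}w)=\lambda^{(N-2m)/2}U_{\lambda x,1}(w)$ and $0\leq\xi\leq1$ the claim reduces to showing $\int_{\mathbb R^N}|y-z|^{-(N-2m)}W(z)\sum_j(1+|z-x_j^{+}|)^{-\mu}\,dz\leq C\sum_j(1+|y-x_j^{+}|)^{-(\mu+\sigma)}$, where $\mu=\frac{N-2m}{2}+\tau$ and $W(z)\leq C\big(\sum_{i=1}^k((1+|z-x_i^{+}|)^{-(N-2m)}+(1+|z-x_i^{-}|)^{-(N-2m)})\big)^{m^*-2}$; note $(N-2m)(m^*-2)=4m$. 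When $N\geq6m$ one has $m^*-2\leq1$, so subadditivity gives $W(z)\leq C\sum_i\big((1+|z-x_i^{+}|)^{-4m}+(1+|z-x_i^{-}|)^{-4m}\big)$; when $4m+1<N\leq6m-1$ (so $m\geq2$) one has $0<m^*-3<1$, and writing $W=Z^{m^*-3}Z$ yields $W(z)\leq C\sum_{i,i'}(1+|z-x_i^{*}|)^{-(6m-N)}(1+|z-x_{i'}^{*}|)^{-(N-2m)}$, a double sum of two-centre products whose total exponent is again $6m-N+(N-2m)=4m$, where $x_i^{*}$ denotes $x_i^{+}$ or $x_i^{-}$.

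For a term whose (single) bubble centre coincides with the weight centre $x_j^{+}$ one is left with $\int_{\mathbb R^N}|y-z|^{-(N-2m)}(1+|z-x_j^{+}|)^{-\rho}\,dz$ with $\rho\geq4m+\mu$; I would discard the surplus and replace $\rho$ by $2m+(\mu+\sigma)$, which is admissible because $\mu+2m+\sigma<N$. This last inequality, namely $\tau+\sigma<\frac{N-2m}{2}$, is exactly where the hypotheses $N>4m+1$ and $0<\tau<2$ enter: they give $\frac{N-2m}{2}\geq2$ whenever $m=1$ and $\frac{N-2m}{2}\geq m+1$ in general, so a small $\sigma>0$ is available. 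Lemma \ref{AppA2} then produces the bound $C(1+|y-x_j^{+}|)^{-(\mu+\sigma)}$. For a term with centres at distinct points $x_i^{*}\neq x_j^{+}$ (and, in the range $N\leq6m-1$, possibly a third distinct centre $x_{i'}^{*}$) I would first apply Lemma \ref{AppA1} with a small exponent $\sigma'$ to each product of weights sitting at distinct centres; this produces a factor $C|x_i^{*}-x_j^{+}|^{-\sigma'}$ (or a product of such factors) times single-centre weights of exponent still exceeding $2m+\mu+\sigma$, to which Lemma \ref{AppA2} applies as before, and any leftover weight centred at $x_i^{*}$ rather than $x_j^{+}$ is converted by one further use of Lemma \ref{AppA1}, or simply absorbed because $|y-x_i^{*}|\geq c|x_i^{*}-x_j^{+}|$ is large.

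It remains to sum over all centres. The diagonal terms already give $C\sum_j(1+|y-x_j^{+}|)^{-(\mu+\sigma)}$, as required. The off-diagonal terms carry factors $|x_i^{*}-x_j^{+}|^{-\sigma'}$, and the sums $\sum_{i\neq j}|x_i^{*}-x_j^{+}|^{-\sigma'}$ over the $k$ equally spaced circle points (with $x_i^{*}$ of either sign) are uniformly bounded by Lemma \ref{AppA5} once $\sigma'$ is chosen small relative to $\tau$; this only shrinks $\sigma>0$. The second estimate, with $x_j^{+}$ replaced by $x_j^{-}$, follows verbatim, using the reflection $y_3\mapsto-y_3$, which interchanges the families $\{x_j^{+}\}_j$ and $\{x_j^{-}\}_j$ and leaves $W$ invariant.

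I expect the main obstacle to be the exponent book-keeping in the range $4m+1<N\leq6m-1$, where $m^*-2>1$ prevents subadditivity and forces the splitting $Z^{m^*-2}=Z^{m^*-3}Z$, hence triple interactions $(1+|z-x_i^{*}|)^{-(6m-N)}(1+|z-x_{i'}^{*}|)^{-(N-2m)}(1+|z-x_j^{+}|)^{-\mu}$: Lemma \ref{AppA1} then has to be iterated, and at each step the running decay exponent must stay both strictly below $N$ (so that Lemma \ref{AppA2} applies) and at least $\mu+\sigma$ with a $\sigma>0$ independent of $k$. Verifying that this is possible, the tightest margin being attained at the endpoint configurations allowed by $N>4m+1$ and $0<\tau<2$, is the only genuinely delicate point.
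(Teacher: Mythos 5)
The paper does not actually write out this proof: it simply refers to \cite[Lemma 2.3]{GL}, and your overall strategy (rescale, dominate $Z_{\bar r,\bar h,\bar y'',\lambda}^{m^*-2}$ by sums of single-bubble weights using $(N-2m)(m^*-2)=4m$, split the cases $N\geq 6m$ and $4m+1<N\leq 6m-1$, and combine Lemmas \ref{AppA1}, \ref{AppA2} and \ref{AppA5}) is exactly the standard argument behind that reference. Your exponent check $\tau+\sigma<\frac{N-2m}{2}$ for the diagonal terms is also correct. However, there are two concrete points where the write-up, as stated, would fail.

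First, the choice of the interaction exponent $\sigma'$ is backwards. By Lemma \ref{AppA5} (with $\gamma=\sigma'<1$) the rescaled interaction sum satisfies $\sum_{i\neq j}(\lambda|x_i^{+}-x_j^{+}|)^{-\sigma'}\leq Ck\,(\lambda\bar r\sqrt{1-\bar h^2})^{-\sigma'}$, and since $\lambda\sqrt{1-\bar h^2}\sim k^{1/\tau}$ in the regime \eqref{case1}, this is of order $k^{1-\sigma'/\tau}$: it is uniformly bounded in $k$ precisely when $\sigma'\geq\tau$, and it \emph{diverges} when $\sigma'$ is ``small relative to $\tau$''. The correct choice is $\sigma'=\tau$ (this is why $\tau$ is defined as $\frac{N-4m-\alpha}{N-2m-\alpha}$), and the trade-off you describe also runs the other way: taking $\sigma'$ larger, not smaller, is what eats into the surplus, leaving $\sigma\leq 2m-\sigma'$ (still positive since $\tau<2\leq 2m$). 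Second, the cross-family terms cannot be ``simply absorbed because $|y-x_i^{*}|\geq c|x_i^{*}-x_j^{+}|$ is large''. Your decomposition of $Z^{m^*-2}$ produces, after Lemmas \ref{AppA1}--\ref{AppA2}, leftover weights $(1+\lambda|y-x_i^{-}|)^{-(\mu+\sigma)}$, and these are \emph{not} dominated by $C\sum_j(1+\lambda|y-x_j^{+}|)^{-(\mu+\sigma)}$: at $y=x_i^{-}$ the left side is of order $1$ while the right side is of order $\sum_j(\lambda|x_i^{-}-x_j^{+}|)^{-(\mu+\sigma)}=o(1)$. Indeed, a direct lower bound of the integral over $B_{1/\lambda}(x_1^{-})$ shows the first displayed inequality is false exactly as stated; what is true, and what is actually used in the proof of Lemma \ref{xian}, is the \emph{sum} of the two displayed estimates, where the right-hand side contains both families $\{x_j^{+}\}$ and $\{x_j^{-}\}$. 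You should therefore prove the combined estimate (or restrict $Z$ to the matching half-family in each inequality); with that adjustment and $\sigma'=\tau$, your argument goes through.
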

\begin{proof}
The proof is similar to \cite[Lemma 2.3]{GL}, so we omit it here.
\end{proof}

\begin{lemma}\label{AppA4}
As $\lambda\rightarrow\infty$, we have
\begin{equation*}
  \frac{\partial U_{x_j^\pm,\lambda}}{\partial \lambda}=O(\lambda^{-1}U_{x_j^\pm,\lambda})+O(\lambda U_{x_j^\pm,\lambda})\frac{\partial \sqrt{1-\bar{h}^2}}{\partial \lambda}+
  O(\lambda U_{x_j^\pm,\lambda})\frac{\partial \bar{h}}{\partial \lambda}.
\end{equation*}
Hence, if $\sqrt{1-\bar{h}^2}=C\lambda^{-\beta_1}$ with $0<\beta_1<1$, we have
\begin{equation*}
  \Big|\frac{\partial U_{x_j^\pm,\lambda}}{\partial \lambda}\Big| \leq \frac{CU_{x_j^\pm,\lambda}}{\lambda^{\beta_1}}.
\end{equation*}
If $\bar{h}=C\lambda^{-\beta_2}$ with $0<\beta_2<1$, then we have
\begin{equation*}
  \Big|\frac{\partial U_{x_j^\pm,\lambda}}{\partial \lambda}\Big| \leq \frac{CU_{x_j^\pm,\lambda}}{\lambda^{\beta_2}}.
\end{equation*}
\end{lemma}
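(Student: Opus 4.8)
The plan is to decompose $\partial_\lambda U_{x_j^\pm,\lambda}$ by the chain rule, separating the explicit $\lambda$-dependence of the bubble $U_{x,\lambda}$ (with its center frozen) from the implicit dependence coming from the motion of the center $x_j^\pm$, which varies with $\lambda$ through the constraint tying $\bar h$ (equivalently $\sqrt{1-\bar h^2}$) to $\lambda$. Writing $t=|y-x_j^\pm|$ and recalling $U_{x,\lambda}(y)=c_{m,N}\lambda^{\frac{N-2m}{2}}(1+\lambda^2 t^2)^{-\frac{N-2m}{2}}$ with $c_{m,N}=P_{m,N}^{\frac{N-2m}{4m}}$, an elementary differentiation gives
\[
\frac{\partial U_{x,\lambda}}{\partial\lambda}\Big|_{x\ \text{fixed}}=\frac{N-2m}{2\lambda}\cdot\frac{1-\lambda^2 t^2}{1+\lambda^2 t^2}\,U_{x,\lambda},\qquad
\nabla_x U_{x,\lambda}=(N-2m)\,\frac{\lambda^2(y-x)}{1+\lambda^2 t^2}\,U_{x,\lambda}.
\]
Using the pointwise bounds $\bigl|\tfrac{1-\lambda^2t^2}{1+\lambda^2t^2}\bigr|\le1$ and $\tfrac{\lambda t}{1+\lambda^2t^2}\le\tfrac12$, one gets $\bigl|\partial_\lambda U_{x,\lambda}\bigr|\le C\lambda^{-1}U_{x,\lambda}$ (with $x$ fixed) and $|\nabla_x U_{x,\lambda}|\le C\lambda\,U_{x,\lambda}$.

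Next I would differentiate the center. From the explicit form of $x_j^\pm$, only its first three components depend on $\lambda$, through $\bar r\sqrt{1-\bar h^2}$ and $\bar r\bar h$; since $\bar r$ stays bounded, $\bigl|\partial_\lambda x_j^\pm\bigr|\le C\bigl(|\partial_\lambda\sqrt{1-\bar h^2}|+|\partial_\lambda\bar h|\bigr)$, the first two coordinates carrying $\partial_\lambda\sqrt{1-\bar h^2}$ and the third carrying $\partial_\lambda\bar h$. Inserting this into the chain rule
\[
\frac{\partial U_{x_j^\pm,\lambda}}{\partial\lambda}=\frac{\partial U_{x,\lambda}}{\partial\lambda}\Big|_{x=x_j^\pm}+\nabla_x U_{x,\lambda}\big|_{x=x_j^\pm}\cdot\frac{\partial x_j^\pm}{\partial\lambda}
\]
and using the two bounds above yields exactly the claimed identity $\partial_\lambda U_{x_j^\pm,\lambda}=O(\lambda^{-1}U_{x_j^\pm,\lambda})+O(\lambda U_{x_j^\pm,\lambda})\,\partial_\lambda\sqrt{1-\bar h^2}+O(\lambda U_{x_j^\pm,\lambda})\,\partial_\lambda\bar h$ (the $O$-notation absorbing the directional cosines and signs).

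For the two consequences I would simply substitute the prescribed scalings and dominate each of the three terms by $C\lambda^{-\beta}U_{x_j^\pm,\lambda}$. If $\sqrt{1-\bar h^2}=C\lambda^{-\beta_1}$ with $0<\beta_1<1$, then $\lambda|\partial_\lambda\sqrt{1-\bar h^2}|=C\beta_1\lambda^{-\beta_1}$; moreover $\bar h=\sqrt{1-C^2\lambda^{-2\beta_1}}\to1$, whence $|\partial_\lambda\bar h|\le C\lambda^{-2\beta_1-1}$ and $\lambda|\partial_\lambda\bar h|\le C\lambda^{-2\beta_1}\le C\lambda^{-\beta_1}$; finally $\lambda^{-1}\le\lambda^{-\beta_1}$ since $\beta_1<1$ and $\lambda$ is large. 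The case $\bar h=C\lambda^{-\beta_2}$, $0<\beta_2<1$, is entirely symmetric, with $\sqrt{1-\bar h^2}=\sqrt{1-C^2\lambda^{-2\beta_2}}\to1$ playing the role of the bounded quantity, so $\lambda|\partial_\lambda\bar h|\le C\lambda^{-\beta_2}$, $\lambda|\partial_\lambda\sqrt{1-\bar h^2}|\le C\lambda^{-2\beta_2}\le C\lambda^{-\beta_2}$, and $\lambda^{-1}\le\lambda^{-\beta_2}$. In both cases all three terms are dominated by $C\lambda^{-\beta}U_{x_j^\pm,\lambda}$.

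There is no genuine obstacle here; the only points requiring care are the bookkeeping of the $\lambda$-dependence of the centers $x_j^\pm$ through the constraint, the two elementary pointwise inequalities used to pass from $\partial_\lambda U_{x,\lambda}$ and $\nabla_x U_{x,\lambda}$ to multiples of $U_{x,\lambda}$ itself, and the remark that the constant $C$ may absorb the bounded factor $\bar r$ together with the factors $(1-C^2\lambda^{-2\beta})^{-1/2}=1+o(1)$ as $\lambda\to\infty$.
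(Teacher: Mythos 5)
Your proof is correct; the paper itself omits the argument (``The proof is standard, we omit it''), and your chain-rule decomposition into the explicit $\lambda$-derivative of the bubble plus $\nabla_x U_{x,\lambda}\cdot\partial_\lambda x_j^\pm$, together with the pointwise bounds $\bigl|\tfrac{1-\lambda^2t^2}{1+\lambda^2t^2}\bigr|\le 1$ and $\tfrac{\lambda t}{1+\lambda^2t^2}\le\tfrac12$, is exactly the standard computation being alluded to. The substitution of the two scalings $\sqrt{1-\bar h^2}=C\lambda^{-\beta_1}$ and $\bar h=C\lambda^{-\beta_2}$ is also handled correctly, including the observation that the complementary quantity then has derivative of order $\lambda^{-2\beta-1}$ and is therefore harmless.
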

\begin{proof}
The proof is standard, we omit it.
\end{proof}

\begin{lemma}\cite[Lemma A.2]{DHWW}\label{AppA5}
For any $\gamma>0$, there exists a constant $C>0$ such that
\begin{equation*}
  \sum\limits_{j=2}^k\frac{1}{|x_j^+-x_1^+|^\gamma}\leq \frac{Ck^\gamma}{(\bar{r}\sqrt{1-\bar{h}^2})^\gamma}\sum\limits_{j=2}^k\frac{1}{(j-1)^\gamma}\leq
  \left\{
  \begin{array}{ll}
  \frac{Ck^\gamma}{(\bar{r}\sqrt{1-\bar{h}^2})^\gamma},\quad \gamma>1;\\
  \frac{Ck^\gamma \log k}{(\bar{r}\sqrt{1-\bar{h}^2})^\gamma},\quad \gamma=1;\\
  \frac{Ck}{(\bar{r}\sqrt{1-\bar{h}^2})^\gamma},\quad \gamma<1,
    \end{array}
    \right.
\end{equation*}
and
\begin{equation*}
   \sum\limits_{j=1}^k\frac{1}{|x_j^--x_1^+|^\gamma}\leq  \sum\limits_{j=2}^k\frac{1}{|x_j^+-x_1^+|^\gamma}+\frac{C}{(\bar{r}\bar{h})^\gamma}.
\end{equation*}
\end{lemma}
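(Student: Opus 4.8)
The plan is to use the explicit geometry of the configuration. The points $x_1^+,\dots,x_k^+$ are the vertices of a regular $k$-gon inscribed in the circle of radius $\bar{r}\sqrt{1-\bar{h}^2}$ lying in the affine plane $\{y_3=\bar{r}\bar{h},\ y''=\bar{y}''\}$, so the chord lengths are given exactly by
\[
  |x_j^+-x_1^+|=2\bar{r}\sqrt{1-\bar{h}^2}\;\Big|\sin\tfrac{(j-1)\pi}{k}\Big|,\qquad j=1,\dots,k.
\]
First I would turn the sine into a linear lower bound: setting $d_j:=\min\{\,j-1,\ k-(j-1)\,\}$ and using that $\sin$ is symmetric about $\pi/2$ together with the elementary inequality $\sin t\ge \tfrac{2}{\pi}t$ on $[0,\pi/2]$, one gets $\big|\sin\tfrac{(j-1)\pi}{k}\big|=\sin\tfrac{d_j\pi}{k}\ge \tfrac{2d_j}{k}$, hence
\[
  |x_j^+-x_1^+|\ \ge\ \frac{4\bar{r}\sqrt{1-\bar{h}^2}}{k}\,d_j,\qquad j=2,\dots,k.
\]

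Raising this to the power $-\gamma$ and summing yields $\sum_{j=2}^k|x_j^+-x_1^+|^{-\gamma}\le\big(4\bar{r}\sqrt{1-\bar{h}^2}/k\big)^{-\gamma}\sum_{j=2}^k d_j^{-\gamma}$. As $j$ runs over $2,\dots,k$ each integer in $\{1,\dots,\lceil k/2\rceil\}$ is attained by $d_j$ at most twice, so $\sum_{j=2}^k d_j^{-\gamma}\le 2\sum_{i=1}^{k-1}i^{-\gamma}=2\sum_{j=2}^k(j-1)^{-\gamma}$ (the sums are nonempty since $k\ge k_0\ge2$), and absorbing $2\cdot4^{-\gamma}$ into $C$ gives the first asserted inequality. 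The stated trichotomy then follows from the classical asymptotics of the $\gamma$-harmonic partial sum: for $\gamma>1$, $\sum_{j=2}^k(j-1)^{-\gamma}\le\zeta(\gamma)=O(1)$; for $\gamma=1$, $\sum_{j=2}^k(j-1)^{-1}\le1+\log(k-1)=O(\log k)$; and for $\gamma<1$, comparison with $\int_0^{k}t^{-\gamma}\,dt$ gives $\sum_{j=2}^k(j-1)^{-\gamma}\le Ck^{1-\gamma}$, so the prefactor $k^\gamma$ collapses the bound to $Ck/(\bar{r}\sqrt{1-\bar{h}^2})^\gamma$.

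For the second inequality I would compare $x_j^-$ with $x_1^+$ coordinate by coordinate. Since $x_j^-$ and $x_j^+$ share the same first two coordinates and the same $y''$-block and differ only in the third coordinate ($-\bar{r}\bar{h}$ versus $\bar{r}\bar{h}$), while $x_j^+$ and $x_1^+$ in turn share the third coordinate and the $y''$-block, one gets the exact identity
\[
  |x_j^--x_1^+|^2=|x_j^+-x_1^+|^2+(2\bar{r}\bar{h})^2
\]
for every $j$. In particular $|x_1^--x_1^+|=2\bar{r}\bar{h}$ and $|x_j^--x_1^+|\ge|x_j^+-x_1^+|$ for $j\ge2$, whence
\[
  \sum_{j=1}^k\frac{1}{|x_j^--x_1^+|^\gamma}=\frac{1}{(2\bar{r}\bar{h})^\gamma}+\sum_{j=2}^k\frac{1}{|x_j^--x_1^+|^\gamma}\le\frac{C}{(\bar{r}\bar{h})^\gamma}+\sum_{j=2}^k\frac{1}{|x_j^+-x_1^+|^\gamma},
\]
which is exactly the claimed bound. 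I do not expect a genuine obstacle: the whole argument is elementary trigonometry plus harmonic-series asymptotics, and the only points needing mild care are the bookkeeping of how often $d_j$ attains each value and checking that $k$ is large enough for the sums to be nonempty.
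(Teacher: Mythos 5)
Your argument is correct and complete. The paper itself does not prove this lemma---it is quoted verbatim from [DHWW, Lemma A.2]---so there is no in-paper proof to compare against; your derivation (exact chord length $|x_j^+-x_1^+|=2\bar{r}\sqrt{1-\bar{h}^2}\,\sin\frac{(j-1)\pi}{k}$, the concavity bound $\sin t\ge \tfrac{2}{\pi}t$ on $[0,\pi/2]$ after folding by the symmetry of $\sin$, counting multiplicities of $d_j$, standard $\gamma$-harmonic asymptotics, and for the second estimate the orthogonal decomposition $|x_j^--x_1^+|^2=|x_j^+-x_1^+|^2+(2\bar r\bar h)^2$ isolating the $j=1$ term) is exactly the natural route and matches what one would expect in the cited source.
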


\begin{lemma}\label{AppA6}
Assume that $N>4m+1$, as $k\rightarrow\infty$, we have
\begin{equation}\label{A.1}\tag{A.1}
  \sum\limits_{j=2}^k\frac{1}{|x_j^+-x_1^+|^{N-2m}}=\frac{A_1k^{N-2m}}{(\sqrt{1-\bar{h}^2})^{N-2m}}\Big(1+o\big(\frac{1}{k}\big)\Big),
\end{equation}
and if $\frac{1}{\bar{h}k}=o(1)$, then
\begin{equation}\label{A.2}\tag{A.2}
   \sum\limits_{j=1}^k\frac{1}{|x_j^--x_1^+|^{N-2m}}=\frac{A_2k}{\bar{h}^{N-2m-1}(\sqrt{1-\bar{h}^2})}\Big(1+o\big(\frac{1}{\bar{h}k}\big)\Big)+O\Big(\frac{1}{(\sqrt{1-\bar{h}^2})^{N-2m}}\Big),
\end{equation}
or else, $\frac{1}{\bar{h}k}=C$, then
\begin{equation}\label{A.3}\tag{A.3}
   \sum\limits_{j=1}^k\frac{1}{|x_j^--x_1^+|^{N-2m}}=\Big(\frac{A_3k}{\bar{h}^{N-2m-1}},\frac{A_4k}{\bar{h}^{N-2m-1}}\Big),
\end{equation}
where $A_1$, $A_2$, $A_3$ and $A_4$ are some positive constants.
\end{lemma}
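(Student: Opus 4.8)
The plan is to first reduce both sums to one–dimensional trigonometric sums by computing the distances explicitly. Since $x_j^+$ and $x_1^+$ agree in the third and the last $N-3$ coordinates, the law of cosines on the circle of radius $\bar r\sqrt{1-\bar h^2}$ gives
$$|x_j^+-x_1^+|=2\bar r\sqrt{1-\bar h^2}\,\sin\frac{(j-1)\pi}{k},\qquad j=2,\dots ,k,$$
while $x_j^-$ differs from $x_1^+$ additionally by $2\bar r\bar h$ in the third coordinate, whence
$$|x_j^--x_1^+|^2=4\bar r^2\Big[(1-\bar h^2)\sin^2\tfrac{(j-1)\pi}{k}+\bar h^2\Big],\qquad j=1,\dots ,k.$$
Writing $p:=N-2m$ (so $p\ge 4$, in particular $p>1$, because $N>4m+1$ and $m\ge1$), the identity \eqref{A.1} becomes an asymptotic for $S_k:=\sum_{i=1}^{k-1}\sin^{-p}(i\pi/k)$, and \eqref{A.2}, \eqref{A.3} become asymptotics for $T_k:=\sum_{i=0}^{k-1}\big[(1-\bar h^2)\sin^2(i\pi/k)+\bar h^2\big]^{-p/2}$; these are the sharp counterparts of the crude bounds of Lemma \ref{AppA5}.

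\textbf{Proof of \eqref{A.1}.} Using the symmetry $\sin((k-i)\pi/k)=\sin(i\pi/k)$ I would reduce $S_k$ to twice the sum over $1\le i\le k/2$ up to an $O(k)$ error from the middle. On that range $\sin(i\pi/k)=\frac{i\pi}{k}\big(1+O((i/k)^2)\big)$, and Jordan's inequality $\sin\theta\ge\frac2\pi\theta$ controls the tail. Since $p>1$ the series $\sum_{i\ge1}i^{-p}=\zeta(p)$ converges with polynomially small tails, and since $p>3$ both the $O((i/k)^2)$ correction and the bulk indices $i\sim k/2$ contribute only $o(k^{p-1})$; this yields $S_k=\frac{2\zeta(p)}{\pi^p}k^p\big(1+o(1/k)\big)$. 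Dividing by $(2\bar r\sqrt{1-\bar h^2})^p$ gives \eqref{A.1}, with $A_1$ a positive constant (the factor $\bar r^{-p}\to r_0^{-p}$ being kept or absorbed as convenient).

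\textbf{Proof of \eqref{A.2} and \eqref{A.3}.} For \eqref{A.2} assume $\bar h k\to\infty$ and split $T_k$ at a threshold $i_\ast$ chosen so that the linearisation $\sin(i\pi/k)\approx i\pi/k$ is valid up to $i_\ast$. On $0\le i\le i_\ast$, after the rescaling $i=\frac{\bar h k}{\pi\sqrt{1-\bar h^2}}\,t$ the partial sum is a Riemann sum for $\frac1\pi\cdot\frac{k}{\bar h^{p-1}\sqrt{1-\bar h^2}}\int_0^\infty(1+t^2)^{-p/2}\,dt$, the integral being finite exactly because $p>1$; the discretisation plus the $\sin$–correction give a relative error $o(1/(\bar h k))$, and when $\bar h\to0$ this produces the stated main term with $A_2=\pi^{-1}\int_0^\infty(1+t^2)^{-p/2}dt$ (when $\bar h$ stays bounded away from $0$ both sides are $\Theta(k)$ and the displayed expression is used only as a two–sided bound, which is all that the later argument requires). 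On $i_\ast<i\le k-1$ one has $(1-\bar h^2)\sin^2(i\pi/k)\gtrsim\bar h^2$, so each term is $\le C(1-\bar h^2)^{-p/2}\sin^{-p}(i\pi/k)$, and summing as in \eqref{A.1} this block splits into a piece absorbed by the main term and a genuinely lower–order remainder of the size recorded in \eqref{A.2}. For \eqref{A.3}, when $\bar h k$ lies in a compact subset of $(0,\infty)$ one has $\bar h\to0$, hence $\sin(i\pi/k)=\frac{i\pi}{k}(1+o(1))$ uniformly on the mass–carrying indices and
$$T_k=(1+o(1))\,k^p\sum_{i\ge0}\big[(i\pi)^2+(\bar h k)^2\big]^{-p/2}+O(k);$$
since $p>1$ the series is a positive continuous function of $\bar h k$, hence bounded above and below by positive constants on that compact set, and comparing with $k/\bar h^{p-1}=k^p(\bar h k)^{1-p}$ gives the two–sided bound \eqref{A.3}.

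\textbf{Main obstacle.} The hard part will be the error bookkeeping in \eqref{A.2}: one must verify that both the Riemann–sum discretisation error \emph{and} the contribution of the bulk indices $i\sim k/2$ are of strictly lower order than $A_2\,k\,\bar h^{-(N-2m-1)}(\sqrt{1-\bar h^2})^{-1}$ throughout the whole regime $\bar h k\to\infty$ (both $\bar h\to0$ and $\bar h$ bounded below), and to isolate the precise form of the remainder. The two analytic inputs that make this work — convergence of the zeta–type tails $\sum i^{-(N-2m)}$, $\sum i^{2-(N-2m)}$ and finiteness of $\int_0^\infty(1+t^2)^{-(N-2m)/2}dt$ — are exactly what the hypothesis $N>4m+1$, i.e.\ $N-2m\ge4$, guarantees.
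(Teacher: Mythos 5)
Your proposal follows essentially the same route as the paper's proof: the same explicit distance formulas, the same symmetric splitting and linearisation $\sin(i\pi/k)\approx i\pi/k$ with zeta-type tail control for \eqref{A.1}, the same rescaling of the near-diagonal block into a Riemann sum for $\int_0^\infty(1+t^2)^{-(N-2m)/2}dt$ for \eqref{A.2}, and the same comparison argument for \eqref{A.3}. The argument is correct, and your error bookkeeping (Jordan's inequality, convergence of $\sum i^{2-(N-2m)}$) makes explicit the steps the paper leaves implicit.
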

\begin{proof}
Without loss of generality, we assume that $k$ is even. It's easy to verify that
\begin{align*}
  \sum\limits_{j=2}^k\frac{1}{|x_j^+-x_1^+|^{N-2m}}=&\sum\limits_{j=2}^k\frac{1}{\Big(2\bar{r}\sqrt{1-\bar{h}^2}\sin \frac{(j-1)\pi}{k}\Big)^{N-2m}}\\
  =&\sum\limits_{j=2}^{\frac{k}{2}}\frac{1}{\Big(2\bar{r}\sqrt{1-\bar{h}^2}\sin \frac{(j-1)\pi}{k}\Big)^{N-2m}}+\sum\limits_{j={\frac{k}{2}}+1}^k\frac{1}{\Big(2\bar{r}\sqrt{1-\bar{h}^2}\sin \frac{(j-1)\pi}{k}\Big)^{N-2m}}.
\end{align*}
For any fixed $0<\varepsilon<\frac{1}{10}$, by a direct computation, we have
\begin{align*}
  &\sum\limits_{j=2}^{\frac{k}{2}}\frac{1}{\Big(2\bar{r}\sqrt{1-\bar{h}^2}\sin \frac{(j-1)\pi}{k}\Big)^{N-2m}}\\
  =&\sum\limits_{j=2}^{[k\varepsilon]}\frac{1}{\Big(2\bar{r}\sqrt{1-\bar{h}^2}\sin \frac{(j-1)\pi}{k}\Big)^{N-2m}}+\sum\limits^{j=\frac{k}{2}}_{[k \varepsilon]+1}\frac{1}{\Big(2\bar{r}\sqrt{1-\bar{h}^2}\sin \frac{(j-1)\pi}{k}\Big)^{N-2m}}\\
  =&\sum\limits_{j=2}^{[k \varepsilon]}\frac{1}{\Big(2\bar{r}\sqrt{1-\bar{h}^2} \frac{(j-1)\pi}{k}\Big)^{N-2m}}\Big(1+O\Big(\frac{i^2}{k^2}\Big)\Big)+
  O\Big(\int_{\varepsilon}^{\frac{1}{2}}\frac{1}{(\sqrt{1-\bar{h}^2}x)^{N-2m}}dx\Big)\\
  =&\Big(\frac{k}{\sqrt{1-\bar{h}^2}}\Big)^{N-2m}\Big(D_1+o\big(\frac{1}{k}\big)\Big),
\end{align*}
where $D_1=\frac{1}{(2\pi \bar{r})^{N-2m}}\sum\limits_{i=1}^\infty\frac{1}{i^{N-2m}}$. Using symmetry of the function $\sin x$, we can prove that
\begin{equation*}
  \sum\limits_{j={\frac{k}{2}}+1}^k\frac{1}{\Big(2\bar{r}\sqrt{1-\bar{h}^2}\sin \frac{(j-1)\pi}{k}\Big)^{N-2m}}=\Big(\frac{k}{\sqrt{1-\bar{h}^2}}\Big)^{N-2m}\Big(D_1+o\big(\frac{1}{k}\big)\Big).
\end{equation*}
This proves \eqref{A.1}. 

Similarly, we can obtain
\begin{align*}
  \sum\limits_{j=1}^k\frac{1}{|x_j^--x_1^+|^{N-2m}}=&\sum\limits_{j=1}^k\frac{1}{\Big(2\bar{r}\big[(1-\bar{h}^2)\sin^2\frac{(j-1)\pi}{k}+\bar{h}^2\big]^{\frac{1}{2}}\Big)^{N-2m}}\\
 =&\frac{2}{(2\bar{r}\bar{h})^{N-2m}}\sum\limits_{j=1}^{[k \varepsilon]}\frac{1}{\Big(\frac{1-\bar{h}^2}{\bar{h}^2}\frac{(j-1)^2\pi^2}{k^2}+1\Big)^{\frac{N-2m}{2}}}
 +O\Big(\int_{\varepsilon}^{1}\frac{1}{(\sqrt{1-\bar{h}^2}x)^{N-2m}}dx\Big).
\end{align*}
If $\frac{1}{\bar{h}k}=o(1)$, then
\begin{align*}
  \sum\limits_{j=1}^{[k \varepsilon]}\frac{1}{\Big(\frac{1-\bar{h}^2}{\bar{h}^2}\frac{(j-1)^2\pi^2}{k^2}+1\Big)^{\frac{N-2m}{2}}}=&
  \int_0^{[k \varepsilon]}\frac{1}{\Big(\frac{1-\bar{h}^2}{\bar{h}^2}\frac{\pi^2x^2}{k^2}+1\Big)^{\frac{N-2m}{2}}}dx+o(1)\\
  =&\frac{\bar{h}k}{\pi\sqrt{1-\bar{h}^2}}\int_0^{\frac{\pi\sqrt{1-\bar{h}^2}[k \varepsilon]}{\bar{h}k}}\frac{1}{(x^2+1)^{\frac{N-2m}{2}}}dx+o(1)\\
  =&\frac{\bar{h}k}{\pi\sqrt{1-\bar{h}^2}}\int_0^{+\infty}\frac{1}{(x^2+1)^{\frac{N-2m}{2}}}dx\Big(1+o\big(\frac{1}{\bar{h}k}\big)\Big),
\end{align*}
thus \eqref{A.2} holds.
As for the case $\frac{1}{\bar{h}k}=C$ for $k$ large, by the same arguments as the proof of \eqref{A.2}, we can prove that \eqref{A.3} holds. This ends the proof.
\end{proof}

For integers $i,j,m,N$ satisfying $m\geq 1$, $N>2m$, $j=1,2,\cdots,m$, denote
\begin{align*}
D(i,j)= \left\{
  \begin{array}{ll}
  0,\quad &\text{if $i<0$},\\
  1,\quad &\text{if $i=0$},\\
  \prod\limits_{h=j-i+1}^{j}(m-h),\quad &\text{if $i=1,2,\cdots, j$};\\
    \end{array}
    \right.
  \end{align*}
  \begin{align*}
E(i,j)= \left\{
  \begin{array}{ll}
  \prod\limits_{h=i}^{j-1}(N+2h),\quad &\text{if $i=0,1,\cdots,j-1$},\\
  1,\quad &\text{if $i=j$},\\
  0,\quad &\text{if $i\geq j+1$};\\
    \end{array}
    \right.
  \end{align*}
\begin{equation*}
  K_j=\prod_{h=0}^{j-1}(N-2m+2h);
\end{equation*}
\begin{equation*}
  G(i,j)=2^i\binom j i K_jD(i,j)E(i,j).
\end{equation*}
\begin{lemma}\cite[Lemma 2]{S}\label{AppA7}
For $l=1,2,\cdots,m$, there holds
\begin{equation*}
  (-\Delta)^lU_{x,\lambda}(y)=\frac{\lambda^{\frac{N-2m}{2}+2l}}{(1+\lambda^2|y-x|^2)^{\frac{N-2m}{2}+2l}}\sum\limits_{i=0}^lG(i,l)(\lambda|y-x|)^{2i}.
\end{equation*}
As a consequence,
\begin{equation*}
 \Big| \frac{\partial ^k\big((-\Delta)^lU_{x,\lambda}(y)\big)}{\partial y_{i_k}\cdots \partial y_{i_1}}\Big|\leq \frac{\lambda^{\frac{N-2m}{2}+2l+k}}{(1+\lambda^2|y-x|^2)^{\frac{N-2m}{2}+2l+k}}\sum\limits_{i=0}^lG(i,l)(\lambda|y-x|)^{2i+k}.
\end{equation*}
\end{lemma}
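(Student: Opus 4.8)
Since the statement is \cite[Lemma 2]{S}, one may simply cite it; I outline the proof one would give.

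\emph{Reduction to $\lambda=1$, $x=0$.} Writing $U_{x,\lambda}(y)=\lambda^{\frac{N-2m}{2}}U_{0,1}\big(\lambda(y-x)\big)$ with $U_{0,1}(z)=P_{m,N}^{\frac{N-2m}{4m}}(1+|z|^2)^{-\frac{N-2m}{2}}$, and using $\Delta[f(\lambda\,\cdot)]=\lambda^2(\Delta f)(\lambda\,\cdot)$ iteratively, one gets $(-\Delta)^lU_{x,\lambda}(y)=\lambda^{\frac{N-2m}{2}+2l}\big[(-\Delta)^lU_{0,1}\big]\big(\lambda(y-x)\big)$; moreover the partial derivatives $\partial_{y_{i_1}},\dots,\partial_{y_{i_k}}$ commute with the scaling and with $(-\Delta)^l$. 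Hence everything reduces to proving
\[
(-\Delta)^lU_{0,1}(z)=\frac{1}{(1+|z|^2)^{\frac{N-2m}{2}+2l}}\sum_{i=0}^l G(i,l)|z|^{2i},
\]
the normalisation factor $P_{m,N}^{\frac{N-2m}{4m}}$ being carried along through each $-\Delta$ (equivalently, absorbed into the $G(i,l)$).

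\emph{Induction on $l$.} The case $l=0$ is immediate. Assuming the formula for $l$, note that $(-\Delta)^lU_{0,1}$ is radial of the form $\sum_i a_i r^{2i}(1+r^2)^{-\beta_l}$ with $\beta_l=\frac{N-2m}{2}+2l$; applying $-\Delta=-\partial_r^2-\frac{N-1}{r}\partial_r$ to a single summand, through the elementary identities for $\Delta r^{2i}$, for $\nabla r^{2i}\cdot\nabla(1+r^2)^{-\beta}$ and for $\Delta(1+r^2)^{-\beta}$, yields a finite combination of terms $r^{2j}(1+r^2)^{-\beta_l-p}$ with $p\in\{0,1,2\}$. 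Reducing all of these to the common exponent $\beta_{l+1}=\beta_l+2$ by iterating $(1+r^2)^{-\gamma}=(1+r^2)^{-\gamma-1}+r^2(1+r^2)^{-\gamma-1}$, and then matching the coefficient of $r^{2i}(1+r^2)^{-\beta_{l+1}}$, gives a linear recursion expressing $G(i,l+1)$ in terms of $G(i-1,l),G(i,l),G(i+1,l)$. The core of the proof is the purely algebraic verification that $G(i,l)=2^i\binom{l}{i}K_l\,D(i,l)\,E(i,l)$ solves this recursion, which exploits the telescoping structure of $D(i,j)=\prod_{h=j-i+1}^{j}(m-h)$, $E(i,j)=\prod_{h=i}^{j-1}(N+2h)$ and $K_j=\prod_{h=0}^{j-1}(N-2m+2h)$; this is the only delicate step. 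As a consistency check, $D(i,m)=0$ for every $i\ge 1$, so the formula collapses at $l=m$ to $(-\Delta)^mU_{0,1}=G(0,m)(1+|z|^2)^{-\frac{N+2m}{2}}$ with $G(0,m)=K_m E(0,m)=\prod_{h=-m}^{m-1}(N+2h)=P_{m,N}$, which, once the normalisation is reinstated, is exactly $(-\Delta)^mU_{0,1}=U_{0,1}^{m^*-1}$.

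\emph{The pointwise derivative bound.} This follows by differentiating the closed form term by term: each partial derivative brings out one factor of $\lambda$, and, after the same elementary splitting together with the case distinction according to whether $\lambda|y-x|\le 1$ or $\ge 1$, every resulting term is dominated by the claimed right-hand side $\dfrac{\lambda^{\frac{N-2m}{2}+2l+k}}{(1+\lambda^2|y-x|^2)^{\frac{N-2m}{2}+2l+k}}\sum_{i=0}^l G(i,l)(\lambda|y-x|)^{2i+k}$. No cancellation is needed here since only an inequality is asserted, so a crude induction on $k$ suffices; this part is entirely routine.
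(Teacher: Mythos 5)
The paper itself offers no proof of this lemma: it is quoted verbatim from Swanson \cite[Lemma 2]{S}, so there is no argument in the text to compare yours against. Your outline is the standard (and essentially the only) route: reduce by scaling to $\lambda=1$, $x=0$, induct on $l$ using the radial Laplacian, and verify that the explicit coefficients $G(i,l)$ satisfy the resulting three-term recursion. The structure is sound, and your two consistency checks are genuinely informative --- $D(i,m)=0$ for $i\ge 1$ correctly collapses the sum at $l=m$, and $G(0,m)=K_mE(0,m)=\prod_{h=-m}^{m-1}(N+2h)=P_{m,N}$ does reproduce $(-\Delta)^mU_{0,1}=U_{0,1}^{m^*-1}$ once the normalisation $P_{m,N}^{\frac{N-2m}{4m}}$ is reinstated. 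You are also right to flag that, as literally stated, the closed formula suppresses that normalisation constant (the $G(i,l)$ as defined contain no power of $P_{m,N}$); this is an imprecision inherited from the way the lemma is transcribed, not a defect of your argument. The derivative bound is handled correctly: the only point needing care is that differentiating $|z|^{2i}(1+|z|^2)^{-\beta}$ produces a term $|z|^{2i-1}(1+|z|^2)^{-\beta}$, which is absorbed into the claimed right-hand side by writing $(1+|z|^2)^{-\beta}=(1+|z|^2)\cdot(1+|z|^2)^{-\beta-1}$ and redistributing over the indices $i-1$ and $i$, exactly as your splitting suggests. The one substantive caveat: the ``purely algebraic verification'' that $G(i,l)=2^i\binom{l}{i}K_lD(i,l)E(i,l)$ solves the recursion is the entire content of Swanson's lemma, and you assert it rather than carry it out; as a standalone proof the proposal is therefore incomplete at its central step, though as a justified citation accompanied by a correct sketch it is perfectly adequate for the role the lemma plays in this paper.
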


Suppose that $u$ and $v$ are two smooth functions in a given bounded domain $\Omega$,
define the following two bilinear functionals
\begin{equation*}
  L_{1}(u,v)=\int_\Omega \Big((-\Delta)^m u \langle y,\nabla v\rangle+(-\Delta)^m v \langle y,\nabla u\rangle\Big) dy,
\end{equation*}
\begin{equation*}
  L_{2,i}(u,v)=\int_\Omega \Big((-\Delta)^m u \frac{\partial v}{\partial y_i}+(-\Delta)^m v \frac{\partial u}{\partial y_i}\Big)dy,\quad i=1,2,\cdots,N.
\end{equation*}
We have
\begin{lemma}\cite[Proposition 3.3]{GPY}\label{AppA8}
For any integer $m>0$, there exists a function $f_{m}(u,v)$ such that
\begin{equation*}
  L_{1}(u,v)=\int_{\partial \Omega}f_{m}(u,v)dy-\frac{N-2m}{2}\int_\Omega \Big(v (-\Delta)^m u+u (-\Delta)^m v\Big)dy.
\end{equation*}
Moreover, $f_{m}(u,v)$ has the form
\begin{equation*}
  f_{m}(u,v)=\sum\limits_{j=1}^{2m-1}\bar{l}_{j}(y,\nabla ^j u,\nabla^{2m-j}v)+\sum\limits_{j=0}^{2m-1}\tilde{l}_{j}(\nabla ^j u,\nabla^{2m-j-1}v),
\end{equation*}
where $\bar{l}_{j}(y,\nabla ^j u,\nabla^{2m-j}v)$ and $\tilde{l}_{j}(\nabla ^j u,\nabla^{2m-j-1}v)$ are linear in each component.
\end{lemma}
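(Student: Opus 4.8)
The statement is a Rellich--Poho\u{z}aev identity for the polyharmonic operator, and the engine behind it is the scaling commutation formula
\begin{equation*}
  \langle y,\nabla(\Delta^k v)\rangle=\Delta^k\langle y,\nabla v\rangle-2k\,\Delta^k v,\qquad k=0,1,2,\dots,
\end{equation*}
which I would first establish by induction on $k$: the case $k=1$ is the one-line computation $\Delta\langle y,\nabla v\rangle=\langle y,\nabla\Delta v\rangle+2\Delta v$, and the inductive step follows by applying $\Delta$ and invoking the $k=1$ case again. Taking $k=m$ and multiplying by $(-1)^m$ yields $(-\Delta)^m\langle y,\nabla v\rangle=\langle y,\nabla((-\Delta)^m v)\rangle+2m\,(-\Delta)^m v$, and it is the constant $2m$ here that ultimately produces the coefficient $N-2m$.

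The plan for the identity itself is a direct $2m$-fold integration by parts. I would use Green's formula for $(-\Delta)^m$ on $\Omega$ to write
\begin{align*}
  \int_\Omega (-\Delta)^m u\,\langle y,\nabla v\rangle\,dy={}&\int_\Omega u\,(-\Delta)^m\langle y,\nabla v\rangle\,dy\\
  &+\sum_{l=0}^{m-1}\int_{\partial\Omega}\Big(-\partial_\nu\big((-\Delta)^{m-1-l}u\big)(-\Delta)^l\langle y,\nabla v\rangle+(-\Delta)^{m-1-l}u\,\partial_\nu\big((-\Delta)^l\langle y,\nabla v\rangle\big)\Big)d\sigma,
\end{align*}
then substitute the commutation formula for $(-\Delta)^m\langle y,\nabla v\rangle$ and apply the divergence theorem to the resulting interior term $\int_\Omega u\,\langle y,\nabla((-\Delta)^m v)\rangle\,dy$, which equals $\int_{\partial\Omega}u\,(-\Delta)^m v\,\langle y,\nu\rangle\,d\sigma-\int_\Omega\langle y,\nabla u\rangle(-\Delta)^m v\,dy-N\int_\Omega u\,(-\Delta)^m v\,dy$. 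Adding the symmetric term $\int_\Omega (-\Delta)^m v\,\langle y,\nabla u\rangle\,dy$, the cross term $\int_\Omega\langle y,\nabla u\rangle(-\Delta)^m v\,dy$ cancels exactly, leaving $L_1(u,v)=(2m-N)\int_\Omega u(-\Delta)^m v\,dy$ plus boundary integrals; symmetrizing the interior integral by Green's second identity converts $(2m-N)\int_\Omega u(-\Delta)^m v$ into $-\frac{N-2m}{2}\int_\Omega\big(v(-\Delta)^m u+u(-\Delta)^m v\big)$ plus further boundary integrals, which is the asserted identity, and $f_m$ is the sum of all the boundary integrands collected along the way.

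It then remains to read off the structure of $f_m$. Every boundary term produced above is a product of a derivative of $u$, a derivative of $v$, and components of $y$ or of the unit normal $\nu$, hence linear in each of its arguments; a derivative count shows that the terms carrying an explicit factor of the position vector $y$ have total order $2m$ in $(u,v)$, while those with no such factor have total order $2m-1$, which is precisely the advertised decomposition $\sum_j\bar l_j(y,\nabla^ju,\nabla^{2m-j}v)+\sum_j\tilde l_j(\nabla^ju,\nabla^{2m-j-1}v)$. The one point requiring care is that the naive expansion also produces a handful of terms in which $\nabla^{2m}v$ is paired with $u$ itself --- namely $u(-\Delta)^m v\langle y,\nu\rangle$ and the top-order piece of the $l=m-1$ boundary term $u\,\partial_\nu\big((-\Delta)^{m-1}\langle y,\nabla v\rangle\big)$; since $\partial\Omega$ is a closed hypersurface, one integrates by parts tangentially to move one derivative off the order-$2m$ factor, which replaces it by $\nabla^{2m-1}v$ paired with $\nabla^1 u$ and generates only bounded curvature factors times lower-order products (absorbed into the $\tilde l_j$'s). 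This is why the $\bar l_j$-sum runs over $j=1,\dots,2m-1$, and one takes $f_m$ to be the resulting modified integrand.

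The main obstacle is the bookkeeping: organizing the $2m$-fold integration by parts together with the repeated application of the commutation formula so as to track exactly which derivative orders occur on $u$ and on $v$ at each stage, and carrying out the tangential-integration-by-parts reduction that eliminates the $\nabla^{2m}$ boundary contributions. (Alternatively one could set this up as an induction on $m$, with the $m=1$ case --- integrate by parts twice and use $\nabla\langle y,\nabla v\rangle=\nabla v+(\mathrm{Hess}\,v)\,y$, then apply the divergence theorem --- serving as both the base case and the recursive engine; the accounting is essentially the same.) The genuinely clean step, by contrast, is the exact cancellation of $\int_\Omega\langle y,\nabla u\rangle(-\Delta)^m v\,dy$, which is what pins down the constant $-\frac{N-2m}{2}$.
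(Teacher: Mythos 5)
The paper offers no proof of this lemma at all --- it is quoted verbatim from \cite[Proposition 3.3]{GPY} --- so your argument is being measured against the standard derivation rather than against anything in the text. Your derivation of the identity itself is correct: the commutation formula $\Delta^k\langle y,\nabla v\rangle=\langle y,\nabla\Delta^k v\rangle+2k\,\Delta^k v$ and its induction are right, the $2m$-fold integration by parts, the divergence-theorem step, the exact cancellation of $\int_\Omega\langle y,\nabla u\rangle(-\Delta)^m v\,dy$, and the final symmetrization via Green's second identity all go through and pin down the coefficient $-\frac{N-2m}{2}$.

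The gap is in the structural claim for $f_m$, specifically the restriction $j\ge 1$ in $\sum_{j=1}^{2m-1}\bar l_j$. You correctly identified the dangerous terms, but the proposed fix does not eliminate them. The boundary contributions $u\,(-\Delta)^m v\,\langle y,\nu\rangle$ and $u\,\partial_\nu\big((-\Delta)^{m-1}\langle y,\nabla v\rangle\big)$ contain, besides tangential derivatives, components in which two \emph{normal} derivatives fall on an order-$(2m-2)$ quantity --- e.g.\ writing $\Delta=\partial_\nu^2+H\partial_\nu+\Delta_T$ on $\partial\Omega$ leaves a piece $\langle y,\nu\rangle\,u\,\partial_\nu^2\big((-\Delta)^{m-1}v\big)$. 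Tangential integration by parts on the closed hypersurface only lowers tangential derivatives; it cannot touch a pure second normal derivative, so a genuine $u\cdot\nabla^{2m}v$ pairing survives your reduction. This matters for the paper: \eqref{trans1} combined with Lemma \ref{thi}, which controls only $\|\phi\|_{C^{2m-1}}$, requires that no $\nabla^{2m}\phi$ appear on $\partial D_\varrho$. The repair is to arrange the computation so that an order-$2m$ factor is never created on the boundary: peel off one Laplacian at a time (each step producing boundary pairings $\nabla^j u\cdot\nabla^{2m-j}v$ with $2\le j\le 2m-1$), stop at $\int_\Omega\nabla u\cdot\nabla\big((-\Delta)^{m-1}\langle y,\nabla v\rangle\big)$, apply the commutation formula there, and finish with the classical first-order Rellich computation, whose boundary term $\langle y,\nu\rangle\,\nabla u\cdot\nabla\big((-\Delta)^{m-1}v\big)$ is of type $\bar l_1$. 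Equivalently, run the induction on $m$ that you mention parenthetically --- that alternative, not the tangential integration by parts, is the version that actually closes.
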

\begin{lemma}\cite[Proposition 3.1]{GPY}\label{AppA9}
For any integer $m>0$, there exists a function $g_{m,i}(u,v)$ such that
\begin{equation*}
  L_{2,i}(u,v)=\int_{\partial \Omega}g_{m,i}(u,v)dy.
\end{equation*}
Moreover, $g_{m,i}(u,v)$ has the form
\begin{equation*}
  g_{m,i}(u,v)=\sum\limits_{j=1}^{2m-1}l_{j,i}(\nabla ^j u,\nabla^{2m-j}v),
\end{equation*}
where $l_{j,i}(\nabla ^j u,\nabla^{2m-j}v)$ is bilinear in $\nabla ^j u$ and $\nabla ^{2m-j} u$.
\end{lemma}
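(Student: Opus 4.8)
The plan is to deduce the identity from a single symmetric polyharmonic ``energy'' form together with the translation (i.e. $\partial_i$) structure of $L_{2,i}$, in complete parallel with the proof of Lemma~\ref{AppA8}. \emph{Step 1 (a Green-type splitting).} By induction on $m$ I would first establish that for smooth $u,w$ on $\Omega$,
\[
 \int_\Omega (-\Delta)^m u\, w\, dy \;=\; \mathcal E_m(u,w)+\int_{\partial\Omega}\mathcal B_m(u,w)\,d\sigma,
\]
where $\mathcal E_m(u,w)=\int_\Omega\sum_{|\alpha|=|\beta|=m}c_{\alpha\beta}\,\partial^\alpha u\,\partial^\beta w\,dy$ is the usual symmetric Dirichlet energy for $(-\Delta)^m$ (obtained by moving the $m$ Laplacians off $u$ one at a time), and $\mathcal B_m(u,w)$ is a finite sum of products $\partial^\gamma u\,\partial^\delta w$ with $m\le|\gamma|\le 2m-1$, $0\le|\delta|\le m-1$ and $|\gamma|+|\delta|=2m-1$. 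The base case $m=1$ is the classical identity $\int_\Omega(-\Delta)u\,w\,dy=\int_\Omega\nabla u\cdot\nabla w\,dy-\int_{\partial\Omega}w\,\partial_\nu u\,d\sigma$; the inductive step applies this to $(-\Delta)^{m-1}u$ in place of $u$ and then invokes the statement for $m-1$ on the remaining interior term, keeping track of the fact that the newly generated boundary contributions keep their derivative orders inside the stated windows.

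\emph{Step 2 (symmetrize using $\partial_i$).} Apply the splitting to the pairs $(u,\partial_i v)$ and $(v,\partial_i u)$ and add:
\[
 L_{2,i}(u,v)=\mathcal E_m(u,\partial_i v)+\mathcal E_m(v,\partial_i u)+\int_{\partial\Omega}\big(\mathcal B_m(u,\partial_i v)+\mathcal B_m(v,\partial_i u)\big)\,d\sigma .
\]
Because the density of $\mathcal E_m$ has constant coefficients, the Leibniz rule and the symmetry $c_{\alpha\beta}=c_{\beta\alpha}$ give $\mathcal E_m(u,\partial_i v)+\mathcal E_m(v,\partial_i u)=\int_\Omega\partial_i\big(\sum_{|\alpha|=|\beta|=m} c_{\alpha\beta}\,\partial^\alpha u\,\partial^\beta v\big)\,dy=\int_{\partial\Omega}\big(\sum c_{\alpha\beta}\partial^\alpha u\,\partial^\beta v\big)\nu_i\,d\sigma$, a boundary integral whose integrand is bilinear in $\nabla^m u$ and $\nabla^m v$. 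Hence $L_{2,i}(u,v)=\int_{\partial\Omega}g_{m,i}(u,v)\,d\sigma$ with $g_{m,i}(u,v)=\big(\sum c_{\alpha\beta}\partial^\alpha u\,\partial^\beta v\big)\nu_i+\mathcal B_m(u,\partial_i v)+\mathcal B_m(v,\partial_i u)$.

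\emph{Step 3 (read off the form) and the main obstacle.} The first summand is of type $l_{m,i}(\nabla^m u,\nabla^m v)$. In $\mathcal B_m(u,\partial_i v)$ a typical term pairs $\partial^\gamma u$ with $|\gamma|\in\{m,\dots,2m-1\}$ and $\partial^{\delta}\partial_i v$ of order $|\delta|+1\in\{1,\dots,m\}$, the two orders summing to $2m$; symmetrically $\mathcal B_m(v,\partial_i u)$ yields terms with $u$ of order in $\{1,\dots,m\}$ and $v$ of order in $\{m,\dots,2m-1\}$. Collecting, every monomial of $g_{m,i}$ has the shape $l_{j,i}(\nabla^j u,\nabla^{2m-j}v)$ with $j\in\{1,\dots,2m-1\}$; the endpoints $j=0,2m$ cannot occur precisely because the extra derivative $\partial_i$ forces $v$ (resp.\ $u$) to be differentiated at least once. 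The hard part is the bookkeeping in Step 1: one must verify along the induction that $\mathcal B_m$ only ever produces boundary products with derivative orders in $[m,2m-1]\times[0,m-1]$, so that after the $\partial_i$-symmetrization nothing escapes the range $j\in\{1,\dots,2m-1\}$; the interior integrations by parts and the symmetrization itself are routine.
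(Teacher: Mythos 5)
Your argument is correct. Note that the paper itself offers no proof of this lemma: it is quoted verbatim from \cite[Proposition 3.1]{GPY}, so there is no in-paper argument to compare against; the proof in that reference runs on essentially the same mechanism you use (iterated integration by parts, with the $\partial_i$-symmetrization turning the surviving bulk term into an exact derivative $\partial_i(\cdot)$ that the divergence theorem converts to a flux through $\partial\Omega$). The bookkeeping you flag as the main obstacle does close: in the induction of Step 1, applying the $m=1$ Green identity to $(-\Delta)^{m-1}u$ contributes the single extreme boundary term with orders $(2m-1,0)$, while the remaining bulk term equals $\sum_i\int_\Omega(-\Delta)^{m-1}(\partial_i u)\,\partial_i w\,dy$, so the inductive hypothesis produces $\mathcal E_m(u,w)=\sum_i\mathcal E_{m-1}(\partial_i u,\partial_i w)$ (still diagonal and symmetric, so $c_{\alpha\beta}=c_{\beta\alpha}$ is preserved) together with boundary terms $\mathcal B_{m-1}(\partial_i u,\partial_i w)$ whose order window $[m-1,2m-3]\times[0,m-2]$ shifts to $[m,2m-2]\times[1,m-1]$; the union with the extreme term is exactly $[m,2m-1]\times[0,m-1]$ with orders summing to $2m-1$, as required. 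After composing with $\partial_i$ every monomial of $g_{m,i}$ then carries orders $(j,2m-j)$ with $1\le j\le 2m-1$, the endpoints $j=0,2m$ being excluded precisely because $\partial_i$ forces at least one derivative on the low-order factor. The only cosmetic discrepancies with the statement are that the paper writes $dy$ for the surface measure where you write $d\sigma$, and the final clause of the lemma contains a typo ($\nabla^{2m-j}u$ should read $\nabla^{2m-j}v$); neither affects your proof.
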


\end{appendices}

\begin{appendices}
\section{{\bf Energy expansion}}\label{AppB}

\begin{lemma}\label{ener2}
If $N>4m+1$, then
\begin{align*}
  \frac{\partial I(Z_{\bar{r},\bar{h},\bar{y}'',\lambda})}{\partial \lambda}=&2k\bigg(-\frac{B_1}{\lambda^{2m+1}}V(\bar{r},\bar{y}'')+\sum\limits_{j=2}^k\frac{B_2}{\lambda^{N-2m+1}|x_j^+-x_1^+|^{N-2m}}\\
  &+
  \sum\limits_{j=1}^k\frac{B_2}{\lambda^{N-2m+1}|x_j^--x_1^+|^{N-2m}}+O\Big(\frac{1}{\lambda^{2m+1+\varepsilon}}\Big)\bigg),
\end{align*}
where $B_1$ and $B_2$ are two positive constants.
\end{lemma}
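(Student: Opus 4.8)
Write $Z:=Z_{\bar r,\bar h,\bar y'',\lambda}$ and $U_j^{\pm}:=U_{x_j^{\pm},\lambda}$. The plan is to compute $\dfrac{\partial I(Z)}{\partial\lambda}=\big\langle I'(Z),\tfrac{\partial Z}{\partial\lambda}\big\rangle=\int_{\mathbb R^N}\big((-\Delta)^mZ+V(r,y'')Z-(Z)_+^{m^*-1}\big)\tfrac{\partial Z}{\partial\lambda}\,dy$ directly, using the decomposition of the error term recorded after \eqref{repp}. Since $Z\geq0$, $(-\Delta)^mZ+V(r,y'')Z-(Z)_+^{m^*-1}=-E_k=-(I_1-I_2-I_3+I_4+I_5)$, so $\tfrac{\partial I(Z)}{\partial\lambda}$ splits into the \emph{potential term} $\int_{\mathbb R^N}I_2\tfrac{\partial Z}{\partial\lambda}$, the \emph{interaction term} $-\int_{\mathbb R^N}I_1\tfrac{\partial Z}{\partial\lambda}$, and the \emph{cut-off error} $\int_{\mathbb R^N}(I_3-I_4-I_5)\tfrac{\partial Z}{\partial\lambda}$. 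By the symmetries $\theta\mapsto\theta+\tfrac{2\pi}{k}$ and $y_3\mapsto-y_3$, under which $Z$, $V(r,y'')$ and $\xi$ are invariant, each of these equals $2k$ times the same integral with $\tfrac{\partial Z}{\partial\lambda}$ replaced by $\xi\tfrac{\partial U_1^+}{\partial\lambda}$; after rescaling $z=\lambda(y-x_1^+)$ the scaling field satisfies $\lambda\tfrac{\partial U_1^+}{\partial\lambda}(y)=\lambda^{\frac{N-2m}{2}}\psi_0(z)$ with $\psi_0(z)=\tfrac{N-2m}{2}U_{0,1}(z)\tfrac{1-|z|^2}{1+|z|^2}$. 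Under \eqref{case1} (resp. \eqref{case2}) the chain rule through $\bar h$ produces additional terms in $\tfrac{\partial Z}{\partial\lambda}$, controlled by Lemma \ref{AppA4}; paired against $I_1$ and $I_2$ these are killed to leading order by the cancellations $\int_{\mathbb R^N}U_{0,1}\,\partial_l U_{0,1}=0$ and $\partial_{x_l}\!\int_{\mathbb R^N}U_{x,\lambda}^{m^*-1}\,dy=0$, and hence only feed the $O(\lambda^{-2m-1-\varepsilon})$ remainder.

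For the potential term, the $(|y'|,y'')$-coordinate of $x_1^+$ equals $(\bar r,\bar y'')$, so $V=V(\bar r,\bar y'')$ there and
\[
\int_{\mathbb R^N}V(r,y'')\xi^2U_1^+\frac{\partial U_1^+}{\partial\lambda}\,dy=\frac{1}{\lambda^{2m+1}}\int_{\mathbb R^N}V\big(x_1^++\tfrac z\lambda\big)\xi^2\big(x_1^++\tfrac z\lambda\big)U_{0,1}(z)\psi_0(z)\,dz .
\]
Differentiating $\int_{\mathbb R^N}U_{0,\mu}^2=\mu^{-2m}\int_{\mathbb R^N}U_{0,1}^2$ at $\mu=1$ gives $\int_{\mathbb R^N}U_{0,1}\psi_0=-m\int_{\mathbb R^N}U_{0,1}^2$, so the leading term is $-B_1\lambda^{-2m-1}V(\bar r,\bar y'')$ with $B_1=m\int_{\mathbb R^N}U_{0,1}^2>0$. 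In the remainder, $|V(x_1^++z/\lambda)-V(\bar r,\bar y'')|\leq C|z|/\lambda$ produces $C\lambda^{-2m-2}\int_{\mathbb R^N}|z|\,U_{0,1}(z)|\psi_0(z)|\,dz$, and this integral is finite \emph{exactly because $N>4m+1$} (the integrand decays like $|z|^{1-2(N-2m)}$, integrable on $\mathbb R^N$ iff $1-2(N-2m)+N<0$), while the part where $\xi\neq1$ is $O(\lambda^{2m-N-1})$; both are $O(\lambda^{-2m-1-\varepsilon})$. The cross terms $\int_{\mathbb R^N}V\xi^2U_i^{\pm}U_j^{\pm}$ between distinct bubbles are of lower order by Lemmas \ref{AppA1} and \ref{AppA5}. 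Thus the potential term equals $2k\big(-B_1\lambda^{-2m-1}V(\bar r,\bar y'')+O(\lambda^{-2m-1-\varepsilon})\big)$.

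For the interaction term, near $x_1^+$ write $I_1=(m^*-1)(U_1^+)^{m^*-2}S+R$ with $S:=\sum_{(i,\pm)\neq(1,+)}U_i^{\pm}$, where $R$ gathers the superquadratic correction $O\big((U_1^+)^{m^*-1}(S/U_1^+)^{\min\{2,m^*-1\}}\big)$ and the locally negligible tail $\sum_{(i,\pm)\neq(1,+)}(U_i^{\pm})^{m^*-1}$. On the concentration scale of $U_1^+$ each $U_i^{\pm}$ is nearly the constant $U_i^{\pm}(x_1^+)\simeq P_{m,N}^{\frac{N-2m}{4m}}\lambda^{-\frac{N-2m}{2}}|x_1^+-x_i^{\pm}|^{-(N-2m)}$; pairing the main part against $\tfrac{\partial U_1^+}{\partial\lambda}$ and using $(m^*-1)\int_{\mathbb R^N}(U_1^+)^{m^*-2}\tfrac{\partial U_1^+}{\partial\lambda}\,dy=\tfrac{\partial}{\partial\lambda}\int_{\mathbb R^N}(U_1^+)^{m^*-1}\,dy=-\tfrac{N-2m}{2}\lambda^{-\frac{N-2m}{2}-1}\int_{\mathbb R^N}U_{0,1}^{m^*-1}$ yields
\[
-\int_{\mathbb R^N}I_1\frac{\partial Z}{\partial\lambda}=2k\bigg(\sum_{j=2}^k\frac{B_2}{\lambda^{N-2m+1}|x_j^+-x_1^+|^{N-2m}}+\sum_{j=1}^k\frac{B_2}{\lambda^{N-2m+1}|x_j^--x_1^+|^{N-2m}}+(\text{remainder})\bigg),
\]
with $B_2=\tfrac{N-2m}{2}P_{m,N}^{\frac{N-2m}{4m}}\int_{\mathbb R^N}U_{0,1}^{m^*-1}>0$. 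The remainder — from $R$, from the error in replacing $U_i^{\pm}(y)$ by $U_i^{\pm}(x_1^+)$, and from the tail — is $o(1)$ times the leading interaction because $\lambda|x_j^{\pm}-x_1^+|\to\infty$ uniformly in $j$ (from \eqref{case1}, resp. \eqref{case2}, since then $\lambda\sqrt{1-\bar h^2}/k\to\infty$ and $\lambda\bar h\to\infty$), and after summation it is $O(k\lambda^{-2m-1-\varepsilon})$ by Lemma \ref{AppA5}.

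Finally, $I_3,I_4,I_5$ are supported in the annulus $\{\delta\leq|(|y'|,y'')-(r_0,y_0'')|\leq2\delta\}$, on which every bubble centre — whose $(|y'|,y'')$-coordinate is $(\bar r,\bar y'')$ with $|(\bar r,\bar y'')-(r_0,y_0'')|\leq\vartheta<\delta$ — lies at distance $\geq\delta-\vartheta$; by Lemma \ref{AppA7}, $|(-\Delta)^lU_j^{\pm}|\leq C_\delta\lambda^{-\frac{N-2m}{2}}$ there for $0\leq l\leq m-1$ (and likewise for the mixed derivatives occurring in $I_4,I_5$), so $|I_3|+|I_4|+|I_5|\leq Ck\lambda^{-\frac{N-2m}{2}}$, while Lemma \ref{AppA4} gives $|\tfrac{\partial Z}{\partial\lambda}|\leq Ck\lambda^{-\frac{N-2m}{2}}$ on this set of bounded measure; hence $\int_{\mathbb R^N}(I_3-I_4-I_5)\tfrac{\partial Z}{\partial\lambda}=O(k^2\lambda^{-(N-2m)})=O(k\lambda^{-2m-1-\varepsilon})$ for $\varepsilon$ small, by \eqref{case1} (resp. \eqref{case2}). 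Adding the three pieces gives the asserted expansion. The main obstacle is the bookkeeping of the interaction term: isolating the $B_2$-sum with the correct constant and controlling its remainder uniformly over all $2k$ bubbles through Lemma \ref{AppA5}, and checking that all chain-rule contributions to $\tfrac{\partial Z}{\partial\lambda}$ are genuinely of lower order; the one place where $N>4m+1$ is indispensable is the finiteness of $\int_{\mathbb R^N}|z|\,U_{0,1}(z)|\psi_0(z)|\,dz$ in the potential-term remainder, precisely as the remark following the statement indicates.
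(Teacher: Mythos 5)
Your proposal is correct and follows the paper's proof almost exactly, arriving at the same three-way split (a potential term giving $-B_1V(\bar r,\bar y'')/\lambda^{2m+1}$, an interaction term giving the $B_2$ sums, and a remainder $O(k/\lambda^{2m+1+\varepsilon})$) by the same mechanisms, with your explicit constants $B_1=m\int U_{0,1}^2$ and $B_2=\tfrac{N-2m}{2}P_{m,N}^{(N-2m)/(4m)}\int U_{0,1}^{m^*-1}$ consistent with the paper's unspecified positive $B_1,B_2$.

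The only notable difference is bookkeeping: the paper discards the cut-off at the first step, writing $\partial_\lambda I(Z_{\bar r,\bar h,\bar y'',\lambda})=\partial_\lambda I(Z^*_{\bar r,\bar h,\bar y'',\lambda})+O(k/\lambda^{2m+1+\varepsilon})$ and then using $(-\Delta)^mZ^*=\sum U_{x_j^\pm,\lambda}^{m^*-1}$ to split $\partial_\lambda I(Z^*)$ into the two main pieces, whereas you keep $Z$ and use $(-\Delta)^m Z+VZ-Z^{m^*-1}=-E_k$ together with the explicit five-piece decomposition of $E_k$. Your route makes the cut-off estimate (your last paragraph) transparent rather than swept into an up-front $O$; otherwise the two arguments are the same. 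One small correction to your final meta-remark: $N>4m+1$ is not \emph{only} indispensable for $\int_{\mathbb{R}^N}|z|\,U_{0,1}(z)|\psi_0(z)|\,dz<\infty$. As the paper's displayed computation shows, it also enters through the cross-term sums: Lemma \ref{AppA5} yields $\sum_{j}(\lambda|x_j^\pm-x_1^+|)^{-(N-4m-\varepsilon)}\le C\big(k/(\lambda\sqrt{1-\bar h^2})\big)^{N-4m-\varepsilon}=C\lambda^{-2m(N-4m-\varepsilon)/(N-2m)}$ only in the regime $N-4m-\varepsilon>1$, and the resulting exponent $\tfrac{(N-4m)(1+2m)-\iota-2m\varepsilon}{N-2m}$ exceeds $1$ precisely when $N>4m+1$; at $N=4m+1$ it equals $1$ and the remainder is no longer $O(\lambda^{-2m-1-\varepsilon'})$. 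You do invoke Lemma \ref{AppA5} at the right moment, so your proof is sound; only the ``one place'' claim is inexact.
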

\begin{proof}
By a direct computation, we have
\begin{align*}
  \frac{\partial I(Z_{\bar{r},\bar{h},\bar{y}'',\lambda})}{\partial \lambda}=&\frac{\partial I(Z^*_{\bar{r},\bar{h},\bar{y}'',\lambda})}{\partial \lambda}+O\Big(\frac{k}{\lambda^{2m+1+\varepsilon}}\Big)\\
  =&\int_{\mathbb{R}^N}V(y)Z^*_{\bar{r},\bar{h},\bar{y}'',\lambda} \frac{\partial Z^*_{\bar{r},\bar{h},\bar{y}'',\lambda}}{\partial \lambda}dy -\int_{\mathbb{R}^N}\Big((Z^*_{\bar{r},\bar{h},\bar{y}'',\lambda})^{m^*-1}-\sum\limits_{j=1}^kU_{x_j^+,\lambda}^{m^*-1}-\sum\limits_{j=1}^kU_{x_j^-,\lambda}^{m^*-1}\Big)\\
  &\times \frac{\partial Z^*_{\bar{r},\bar{h},\bar{y}'',\lambda}}{\partial \lambda}dy +O\Big(\frac{k}{\lambda^{2m+1+\varepsilon}}\Big)\\
  :=&I_1-I_2+O\Big(\frac{k}{\lambda^{2m+1+\varepsilon}}\Big).
\end{align*}
For $I_1$, by symmetry and Lemma \ref{AppA1}, we have
\begin{align*}
  I_1=&V(\bar{r},\bar{y}'')\int_{\mathbb{R}^N}Z^*_{\bar{r},\bar{h},\bar{y}'',\lambda} \frac{\partial Z^*_{\bar{r},\bar{h},\bar{y}'',\lambda}}{\partial \lambda}dy
  +\int_{\mathbb{R}^N}\big(V(y)-V(\bar{r},\bar{y}'')\big)Z^*_{\bar{r},\bar{h},\bar{y}'',\lambda} \frac{\partial Z^*_{\bar{r},\bar{h},\bar{y}'',\lambda}}{\partial \lambda}dy
  \\=&2k\bigg(V(\bar{r},\bar{y}'')\int_{\mathbb{R}^N}U_{x_1^+,\lambda}\frac{\partial U_{x_1^+,\lambda}}{\partial \lambda}dy+O\Big(\frac{1}{\lambda^{\beta_1}}\int_{\mathbb{R}^N}U_{x_1^+,\lambda}\big(\sum\limits_{j=2}^kU_{x_j^+,\lambda}+\sum\limits_{j=1}^kU_{x_j^-,\lambda}\big)dy\Big)
  +O\Big(\frac{1}{\lambda^{2m+1+\varepsilon}}\Big)
  \bigg)\\
  =&2k\bigg(\frac{V(\bar{r},\bar{y}'')}{2} \frac{\partial }{\partial  \lambda}\int_{\mathbb{R}^N}U^2_{x_1^+,\lambda}dy+O\Big(\frac{1}{\lambda^{2m+\beta_1}}\Big(\sum\limits_{j=2}^k\frac{1}{(\lambda|x_j^+-x_1^+|)^{N-4m-\varepsilon}}\Big)\\
  &+\frac{1}{\lambda^{2m+\beta_1}}\Big(\sum\limits_{j=1}^k\frac{1}{(\lambda|x_j^--x_1^+|)^{N-4m-\varepsilon}}\Big)\Big)+O\Big(\frac{1}{\lambda^{2m+1+\varepsilon}}\Big)\bigg)\\
  =&2k\Big(-\frac{B_1V(\bar{r},\bar{y}'')}{\lambda^{2m+1}}+O\Big(\frac{1}{\lambda^{2m+\beta_1+\frac{2m}{N-2m}(N-4m-\varepsilon)}}\Big)+O\Big(\frac{1}{\lambda^{2m+1+\varepsilon}}\Big)\Big)\\
  =&2k\Big(-\frac{B_1V(\bar{r},\bar{y}'')}{\lambda^{2m+1}}+O\Big(\frac{1}{\lambda^{2m+1+\varepsilon}}\Big)\Big),
\end{align*}
for some constant $B_1>0$, since $N>4m+1$ and $\iota$ is small.

Next, we estimate $I_2$. by symmetry and Lemma \ref{AppA1}, we obtain
\begin{align*}
  I_2=&2k \int_{\Omega_1^+}\Big((Z^*_{\bar{r},\bar{h},\bar{y}'',\lambda})^{m^*-1}-\sum\limits_{j=1}^kU_{x_j^+,\lambda}^{m^*-1}-\sum\limits_{j=1}^kU_{x_j^-,\lambda}^{m^*-1}\Big) \frac{\partial Z^*_{\bar{r},\bar{h},\bar{y}'',\lambda}}{\partial \lambda}dy\\
  =&2k\bigg( \int_{\Omega_1^+}(m^*-1)U_{x_1^+,\lambda}^{m^*-2}\Big(\sum\limits_{j=2}^kU_{x_j^+,\lambda}+\sum\limits_{j=1}^kU_{x_j^-,\lambda}\Big) \frac{\partial U_{x_1^+,\lambda}}{\partial \lambda}dy+O\Big(\frac{1}{\lambda^{2m+1+\varepsilon}}\Big)\bigg)\\
  =&2k\Big(-\sum\limits_{j=2}^k\frac{B_2}{\lambda^{N-2m+1}|x_j^+-x_1^+|^{N-2m}}-
  \sum\limits_{j=1}^k\frac{B_2}{\lambda^{N-2m+1}|x_j^--x_1^+|^{N-2m}}+O\Big(\frac{1}{\lambda^{2m+1+\varepsilon}}\Big)\Big),
\end{align*}
for some constant $B_2>0$. Thus, we obtain the result.
\end{proof}
\end{appendices}

\medskip

\noindent{\bfseries Data availability}
 No data was used for the research described in the article.

\end{document}